\patchcmd{\section}{\scshape}{\bfseries}{}{}
\renewcommand{\@secnumfont}{\bfseries}
\theoremstyle{plain}
\newtheorem{thm}{Theorem}[section]
\newtheorem*{thm*}{Theorem}
\newtheorem*{satz*}{Satz}
\newtheorem{prop}[thm]{Proposition}
\newtheorem*{prop*}{Proposition}
\newtheorem{lem}[thm]{Lemma}
\newtheorem*{lem*}{Lemma}
\newtheorem*{koro*}{Corollary}
\theoremstyle{definition}
\newtheorem{defi}[thm]{Definition}
\newtheorem*{defi*}{Definition}
\newenvironment{taggedcondition}[1]
 {\taggedtheoremx}
 {\endtaggedtheoremx}
\theoremstyle{remark}
\newtheorem{rem}[thm]{Remark}
\newtheorem*{rem*}{Remark}
\newtheorem{example}[thm]{Example}
\newtheorem*{example*}{Example}
\numberwithin{equation}{section}
\newcommand{\fr}{\mathfrak}
\newcommand{\RR}{\mathbb{R}}
\newcommand{\ZZ}{\mathbb{Z}}
\newcommand{\NN}{\mathbb{N}}
\newcommand\sumst{\mathop{{\sum}^\ast}\limits}
\newcommand{\mods}[1]{\,(\mathrm{mod}\,{#1})}
\renewcommand{\geq}{\geqslant}
\renewcommand{\leq}{\leqslant}
\DeclareMathOperator{\vol}{vol}
\DeclareMathOperator{\rank}{rank}
\title{Solving quadratic forms in restricted variables with the circle method}
\author{Mieke Wessel and Svenja zur Verth}
\begin{document}
\begin{abstract}
 Let $f(\bm x)$ be a non-singular quadratic form with sufficiently many mixed terms and $t$ an integer. For a sequence of weights $\mathcal A$ we study the number of weighted solutions to $f(\bm x) = t$. In particular, we give conditions on both $\mathcal A$ and $f$ such that we can use the circle method to count such solutions of bounded height.
\end{abstract}
\maketitle

\section{Introduction}
Let $f(\bm x) \in \ZZ[x_1, \ldots, x_s]$ be a non-singular quadratic form and $\mathcal A$ a sequence $(a_x)_{x \in \NN}$ with $a_x \in \RR_{\geq 0}$ such that the sum over $\mathcal A$ diverges. The goal of this paper is to give general conditions on $\mathcal A$ and $f$ for which we can carry out the circle method to count the weighted number of solutions of bounded height to $f(\bm x) = t$, $t\in\ZZ$, where the weights equal $\prod_{i=1}^s a_{x_i}$. In other words, we will give conditions under which we can determine an asymptotic formula for
\begin{equation*}
    R_{f, t}(X, \mathcal A) := \sum_{0 \leq \bm x \leq X} \mathds 1_{[f(\bm x) = t]} \prod_{i=1}^s a_{x_i}.
\end{equation*}
For some specific choices of $\mathcal A$ this counting function has already been studied thoroughly. For instance, if we choose $a_x = 1$ for all $x \in \NN$ the value of $R_{f, t}(X, \mathcal A)$ is the number of positive integral solutions to $f(\bm x) = t$. This is a classical problem and it is known due to Heath-Brown \cite{HeathBrown1996} that an asymptotic formula exists whenever the number of variables $s$ is greater or equal to $3$. By work of Baker \cite{Baker1, Baker2}, the asymptotic formula for the case $a_x = \mu(x)^2$, which is the indicator function of the squarefree numbers, is known, also whenever $s \geq 3$. Another important example is motivated by the question if $f(\bm x) = t$ has any solutions in the primes. This has been researched by Liu and Zhao in \cite{Liu2011integralpoints} and \cite{ZHAO_2016}. 
Rather than taking the indicator function for primes they considered the weights $a_x = \Lambda(x)$, where $\Lambda(x)$ is the von Mangoldt function. They conclude that whenever $f$ has sufficiently many mixed terms, the asymptotic formula can be determined. The meaning of sufficiently many is similar to our Condition \ref{CL1} below, and consequently implies that $s \geq 9$ or $s\geq 10$. 

One thing that stands out about these examples is that in each case the sequence $\mathcal A$ is far from random. It has a lot of arithmetic structure and the series $A(X) := \sum_{x=1}^{X} a_x$ has a nice asymptotic expression as $X\in \RR$ grows. That raises the question, which minimal conditions on $\mathcal A$ and $f$ are sufficient to be able to apply the circle method to get an asymptotic formula for $R_{f, t}(X, \mathcal A)$. To answer this question we build upon the work of Biggs and Brandes \cite{biggs2023minimalistversioncirclemethod}, who studied a very similar question for several variations of Waring's problem. In our case, however, we will consider forms that have many mixed terms.

To make the idea of `many mixed terms' more precise we introduce $F = (F_{i, j})_{1\leq i, j \leq s} \in M_{s\times s}(\ZZ)$, the symmetric matrix of $f$ such that 
$$f(\bm x) = \sum_{i=1}^s \sum_{j=1}^s F_{i, j}x_ix_j = \bm x^T F \bm x.$$ 
Define $n = \lfloor s/2 \rfloor$ and let $F_1$ be the $n \times n$ matrix, $F_2$ the $(s - n) \times n$ matrix, $F_3$ the $n \times (s-n)$ matrix and $F_4$ the $(s-n)\times (s-n)$ matrix such that
$$F = \begin{pmatrix}
    F_1 & F_2 \\
    F_3 & F_4
\end{pmatrix}.$$ 
\begin{taggedcondition}{$\textup{L}_1$}\label{CL1}
    We say that $f$ meets Condition $\text{L}_1$ if we can permute the variables $x_1, \ldots, x_s$ such that
    \begin{equation*}\label{EQL1}
        \rank(F_2) \geq 5.
    \end{equation*}
\end{taggedcondition}
\begin{rem}
    In particular Condition \ref{CL1} implies that $s \geq 10$ and that $f$ has at least $5$ mixed terms.
\end{rem}

Apart from needing a positive non-singular solution to $f(\bm x)=0$ to guarantee positivity of the singular integral, Condition \ref{CL1} will be the only condition on $f$. All the other conditions are on the sequence $\mathcal A$. From now on we will write $L$ to denote $\log(X)$ and we recall that $A(X) = \sum_{x=1}^{X} a_x, X \in \RR$. 
\begin{taggedcondition}{$\textup{L}_2$}\label{CL2}
    We say that $\mathcal A$ meets Condition $\text{L}_2$ if there exists a constant $c\geq 0$ such that for all $X \geq 1$ it holds that
\begin{equation*}
\sum_{x = 1}^X a_x^2 \ll A(X)^2X^{-1}L^c.
\end{equation*}
\end{taggedcondition}
\begin{rem}
Note that by Cauchy-Schwarz we have the lower bound $$A(X)^2X^{-1}\leq \sum_{x = 1}^X a_x^2.$$
\end{rem}
\begin{rem}\label{R L2withindicatorfct}
    When $a_x$ is an indicator function we have that $\sum_{x \leq X}{a_x^2} = A(X)$ and therefore, 
    $$A(X) \gg XL^{-c}$$
    whenever Condition \ref{CL2} holds.
\end{rem}

 To apply the circle method it is necessary that the weights $a_x$ are distributed in a consistent way over residue classes modulo $q$ as $X$ grows. For $q \in \NN$ and the residue class $h$ modulo $q$ we define $A(q, h; X) := \sum_{\substack{1\leq x \leq X \\ x \equiv h \mods q}} a_x$.

\begin{taggedcondition}{D}\label{CD}
Let $Q_D = Q_D(X)$ be an increasing function of $X$. We refer to a set $\mathcal A$ as distributed to the level $Q_D$ when for all $X>1$ and for each $q \leq Q_D$ and $h \mods q$ there is a real number $\kappa(q, h) \geq 0$ such that the asymptotic formula
\begin{equation*}
A(q, h; X) = \kappa(q, h)A(X) + E(X)
\end{equation*}
is satisfied with an error term $E(X) = O(A(X)L^{-m})$ for all $m \in \NN$. The implicit constant is allowed to depend on $m$.  
\end{taggedcondition}

\begin{rem}
    A similar condition was first stated in \cite{Bruedern13inBB}. Our condition is a bit stronger than the equivalent one in \cite{biggs2023minimalistversioncirclemethod} -- also called Condition (D) -- where the error term $E(X)$ is allowed to be of order $o(A(X))$.  
\end{rem}

\begin{rem}
    Note that Condition \ref{CD} implies 
    $$\sum_{h\mods q}\kappa(q,h)=1.$$
\end{rem}

The following condition states that $\kappa$ has some multiplicative structure, which is needed to interpret the constant of the asymptotic formula as a product of local densities.

\begin{taggedcondition}{C}\label{CC}
    Suppose that $\mathcal{A}$ is distributed to the level $Q_D=Q_D(X)$. Let $q, q', h, h' \in \NN$. We say $\mathcal A$ satisfies Condition C if the coefficients $\kappa(q,h)$ satisfy 
    \begin{equation*}
 \kappa(qq', qh'+q'h)=\kappa(q,q'h)\kappa(q',qh') \text{ for all }(q,q')=1 \text{ such  that } qq' < Q_D.
\end{equation*}
\end{taggedcondition}

Finally, our last condition states that $\kappa$ is close to being uniformly distributed. We need this assumption to ensure that product of local densities converges.

\begin{taggedcondition}{K}\label{CK}
For $q\rightarrow\infty$ there exists a constant $c\geq 0$ independent of $q$ such that 
    \begin{equation*}  
 \kappa(q,h)\ll \frac{(\log q)^c}{q} .  \end{equation*}
\end{taggedcondition}

Our main result states that under the above conditions we can give an asymptotic formula for $R_{f, t}(X, \mathcal A)$.

\begin{thm}\label{Tmain}
    Assume Conditions \emph{\ref{CC}, \ref{CD}, \ref{CK}, \ref{CL1}} and \emph{\ref{CL2}} hold and that $A(X)$ is smoothly approximable (see Definition \ref{Dsmthaprox} below). Furthermore, let $K>0$ be any real number and define $\mathfrak I(X)$ as in Equation (\ref{EQ truncated sing series}). Then
    $$R_{f, t}(X, \mathcal A) = \mathfrak S \mathfrak{I}(X) + O(A(X)^sX^{-2}L^{-K} + \mathfrak I (X) L^{-K}).$$
    Here $\mathfrak S$ is a non-negative constant that can be interpreted as the product of local $p$-adic densities for all primes $p$. 
\end{thm}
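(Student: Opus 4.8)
The plan is to run the Hardy--Littlewood circle method in the standard way, exploiting Condition \ref{CL1} to get enough control on the minor arcs. First I would write
\[
R_{f,t}(X,\mathcal A) = \int_0^1 S(\alpha) e(-\alpha t)\,d\alpha,
\qquad
S(\alpha) = \sum_{0 \le \bm x \le X} \Big(\prod_{i=1}^s a_{x_i}\Big) e\big(\alpha f(\bm x)\big),
\]
and introduce a Farey dissection at some height $Q$ into major arcs $\mathfrak M$ (around rationals $a/q$ with $q \le Q_0$ for a suitable $Q_0$, a small power of $L$ or of $X$) and minor arcs $\mathfrak m$. The key structural point is that, after the permutation guaranteed by Condition \ref{CL1}, the exponential sum factors as a bilinear-type object in the blocks $\bm x = (\bm y, \bm z)$ with $\bm y \in \ZZ^n$, $\bm z \in \ZZ^{s-n}$: writing $f(\bm y,\bm z) = \bm y^T F_1 \bm y + 2\bm z^T F_2 \bm y + \bm z^T F_4 \bm z$, the sum over $\bm z$ for fixed $\bm y$ (or vice versa) is controlled by the linear form $F_2 \bm y$, and $\rank(F_2) \ge 5$ means this linear form is genuinely $5$-dimensional.

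For the minor arcs I would carry out a Weyl-type differencing / Cauchy--Schwarz argument: squaring $S(\alpha)$ and differencing in $\bm z$ reduces matters to a sum of exponential sums of linear forms $e(\alpha \bm v^T F_2 \bm y)$ over $\bm y$, where $\bm v$ ranges over differences of $\bm z$-values. Summing the resulting geometric series gives a bound in terms of $\sum \min(X, \|\alpha \ell\|^{-1})$ over the (at least five) independent linear forms $\ell$ coming from the rows of $F_2$, and a Dirichlet/large-sieve-type estimate then yields $S(\alpha) \ll A(X)^s X^{-s/2} \cdot (\text{saving})$ on $\mathfrak m$; here Condition \ref{CL2} is used to replace the trivial bound $\prod a_{x_i} \le (\max a_x)^s$ by the $L^2$-average, i.e.\ to convert $\sum a_x^2$ bounds into the clean shape $A(X)^2 X^{-1} L^c$ whenever a Cauchy--Schwarz step costs us one copy of $\sum a_{x_i}^2$. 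Having $\rank(F_2)\ge 5$ ensures the number of effective linear forms exceeds $4$, which is precisely what is needed for the minor-arc integral $\int_{\mathfrak m} |S(\alpha)|\,d\alpha$ to beat the main term by an arbitrary power of $L$ (this is the analogue of needing enough variables for $\sum_{q}\sum_{a}(\dots)$ to converge with room to spare).

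For the major arcs I would, on each arc around $a/q$, split each variable into residue classes $x_i \equiv h_i \pmod q$ and write $\alpha = a/q + \beta$; Condition \ref{CD} then replaces $A(q,h_i;X)$ by $\kappa(q,h_i) A(X)$ up to an error $O(A(X) L^{-m})$, so that $S(a/q+\beta)$ decouples as $A(X)^s$ times a complete exponential sum $\sum_{\bm h \bmod q} \big(\prod \kappa(q,h_i)\big) e(a f(\bm h)/q)$ times a smooth integral $\int (\dots) e(\beta(f-t))$. Summing the complete sums over $a$ and $q$ produces the singular series $\mathfrak S$; Condition \ref{CC} lets me factor it as an Euler product over primes (interpreting each factor as a $p$-adic density), and Condition \ref{CK} together with $\rank(F_2)\ge 5$ gives $p$-adic densities of the form $1 + O(p^{-3/2})$ or better, so the product converges absolutely and $\mathfrak S \ge 0$; the existence of a positive non-singular zero of $f$ upgrades this to $\mathfrak S > 0$ when relevant. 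The $\beta$-integrals assemble into the singular integral, and here is where ``smoothly approximable'' $A(X)$ enters: to evaluate $\int_{-1/2}^{1/2}\big(\int_{[0,X]^s} e(\beta f(\bm u))\,d\bm u\big)^{?}\dots$ — more precisely to pass from the discrete sum weighted by $a_{x_i}$ to an integral against $dA$ — I need $A$ to be well-approximated by a smooth increasing function, yielding $\mathfrak I(X)$ as in \eqref{EQ truncated sing series}.

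The main obstacle I expect is the minor-arc estimate: making the Cauchy--Schwarz/differencing argument interact correctly with the weights $a_{x_i}$ (which are neither bounded nor multiplicative in any usable way) so that the only input needed is the $L^2$-bound of Condition \ref{CL2}, and simultaneously checking that $\rank(F_2)\ge 5$ — rather than, say, $\ge 9$ as in the von Mangoldt case — really does suffice once one is not forced to absorb a loss from an unbounded weight. A secondary technical point is the bookkeeping on the major arcs to show that the error terms from Condition \ref{CD} (which carry a factor $q^s$ from summing over $\bm h \bmod q$) stay under control for $q$ up to $Q_0$; this forces $Q_0$ to be a fixed power of $L$, which then has to be reconciled with the power of $L$ saved on the minor arcs, and ultimately with the arbitrary $K$ in the statement by taking $m$ large in Condition \ref{CD}.
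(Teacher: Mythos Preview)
Your proposal is correct and follows the paper's approach closely: Cauchy--Schwarz followed by Weyl differencing for the minor arcs (with Condition~\ref{CL2} absorbing each $L^2$-loss and $\rank(F_2)\ge 5$ supplying enough independent linear forms), residue-class decoupling via Condition~\ref{CD} on the major arcs with $P=L^B$, and Conditions~\ref{CK} and~\ref{CC} for absolute convergence and the Euler-product interpretation of $\mathfrak S$. One minor slip: the positive real zero of $f$ plays no role in Theorem~\ref{Tmain} or in the non-negativity of $\mathfrak S$ --- it enters only in Theorem~\ref{Tmainbox}, for the singular integral.
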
 
\begin{rem}
    A typical example one can think of for $A(X)$ being smoothly approximable is $A(X) = XL^k + O(XL^{-m})$ for $k \in \RR$ fixed and any $m$.
\end{rem}
To also interpret $\mathfrak I(X)$ we consider a slightly different related counting problem. Let $\mathfrak B$ be a compact subset of $\RR_{>0}^s$ and define $R_{f, t}(X, \mathcal A, \mathfrak B)$ as the weighted number of solutions inside $X\mathfrak B := \{ X\bm v \mid \bm v \in \mathfrak B\}$, more precisely
$$R_{f, t}(X, \mathcal A, \mathfrak B) := \sum_{\bm x \in X\mathfrak B} \mathds 1_{\{0\}}(f(\bm x) - t)\prod_{i=1}^s a_{x_i}.$$
Note that $R_{f, t} (X, \mathcal A, [0, 1]^s) = R_{f, t}(X, \mathcal A).$
\begin{thm}\label{Tmainbox}
Assume we are in the same setting as in Theorem \ref{Tmain} and that $f(\bm x) = 0$ has a solution in $\RR_{>0}^s$. Then there are compact sets $\mathfrak C_1 \subset \mathfrak C_2$ such that for each $X$ there exists $\mathfrak B$, a suitable compact set with $ \mathfrak C_1 \subset \mathfrak B \subset \mathfrak C_2$, for which the following holds.

Let $\mathfrak I_{\mathfrak B}(X)$ be as in Equation (\ref{EQsingintbox}). Then
$$R_{f, t}(X, \mathcal A, \mathfrak B) = \mathfrak S \mathfrak I_{\mathfrak B}(X) + O\left(A(X)^sX^{-2}L^{-K}\right),$$
where $\mathfrak S$ has the same value as in Theorem \ref{Tmain}. Furthermore
    $$A(X)^sX^{-2} \ll \mathfrak I_{\mathfrak B}(X) \ll A(X)^sX^{-2},$$
    and the value $\frac{X^2}{A(X)^s}\mathfrak I_{\mathfrak B}(X)$ can be interpreted as a local density over the reals inside the compact set $\mathfrak B$.
\end{thm}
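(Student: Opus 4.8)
The plan is to deduce Theorem~\ref{Tmainbox} from Theorem~\ref{Tmain} by a dissection-of-the-box argument, so that the main work is transferring the circle-method machinery that already delivers the asymptotic for the full cube $[0,1]^s$ to an arbitrary compact region $\mathfrak B$ sandwiched between two fixed sets $\mathfrak C_1\subset\mathfrak C_2$. First I would go back to the proof of Theorem~\ref{Tmain} and isolate where the shape of the summation region enters: the exponential sum over $\bm x$ factors as a product of one-dimensional sums $\sum_{0\le x\le X}a_x e(\alpha x \cdot(\text{linear form}))$, and the region $[0,1]^s$ is what makes this a clean product. For a general box $\mathfrak B$ the minor-arc estimates are unaffected (they only use Condition~\ref{CL1}, Condition~\ref{CL2} and the size of $A(X)$, not the precise region), so the minor-arc contribution is still $O(A(X)^sX^{-2}L^{-K})$. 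On the major arcs, the only change is that the archimedean integral $\mathfrak I(X)$ gets replaced by a truncated singular integral $\mathfrak I_{\mathfrak B}(X)$ over $\mathfrak B$, defined in Equation~(\ref{EQsingintbox}); the singular series $\mathfrak S$ is region-independent because it is built purely from the local densities $\kappa(q,h)$, which is precisely why $\mathfrak S$ has the same value in both theorems.

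The second ingredient is the choice of $\mathfrak B$ and the sets $\mathfrak C_1\subset\mathfrak C_2$. Since $f(\bm x)=0$ has a solution $\bm v_0$ in the open positive orthant $\RR_{>0}^s$, and $f$ is continuous, there is an open neighbourhood $U$ of $\bm v_0$ on which $f$ takes both signs (or at least stays small), so that the real variety $\{f=0\}$ passes through the interior of a fixed compact box $\mathfrak C_1$; I would take $\mathfrak C_2$ to be a slightly larger compact box still inside $\RR_{>0}^s$. The point of allowing $\mathfrak B$ to vary with $X$ is purely technical: to control the boundary, one wants $X\mathfrak B$ to have its faces at half-integer coordinates or otherwise avoid awkward alignment, and one may need to nudge the box slightly so that the truncated singular integral is genuinely of exact order $A(X)^sX^{-2}$ rather than accidentally degenerate. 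So I would state: choose $\mathfrak B$ as a box $\prod_i[\alpha_i(X),\beta_i(X)]$ with each $\alpha_i,\beta_i$ in a fixed compact interval, adjusted (within $O(1/X)$) so that the summation region behaves well; this does not affect the main term to the required precision because shifting a face by $O(1/X)$ changes the count by $O(A(X)^{s-1}\cdot\max a_x)$, which under Condition~\ref{CL2} is absorbed into the error term.

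The third step is the two-sided bound $A(X)^sX^{-2}\ll\mathfrak I_{\mathfrak B}(X)\ll A(X)^sX^{-2}$ and the interpretation of $X^2A(X)^{-s}\mathfrak I_{\mathfrak B}(X)$ as a real local density. The upper bound is immediate from trivially estimating the oscillatory integral over a compact region of volume $O(1)$. For the lower bound one uses that the singular integral equals (up to the normalising powers of $A(X)$ and $X$) the surface measure that $\{f=\bm v\}$ induces on $\mathfrak B$, via the standard computation $\int_{-\infty}^{\infty}\int_{\mathfrak B}e(\gamma(f(\bm v)-t/X^2))\,d\bm v\,d\gamma=\int_{\{f=t/X^2\}\cap\mathfrak B}\frac{d\sigma}{|\nabla f|}$ — this is where the smooth approximability of $A(X)$ is used, to pull $A(X)$ out of the one-dimensional integrals cleanly. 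Since $\mathfrak C_1$ was chosen so that $\{f=0\}$ (hence $\{f=t/X^2\}$ for $X$ large) meets the interior of $\mathfrak C_1$ transversally (non-singularity of $f$ guarantees $\nabla f\ne 0$ on the variety away from the origin, and the solution lies in $\RR_{>0}^s$ so it is bounded away from the origin), this surface integral is bounded below by a positive constant, giving the claimed lower bound.

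The main obstacle I anticipate is not any single estimate but the bookkeeping around the $X$-dependence of $\mathfrak B$: one must verify that every error term in the proof of Theorem~\ref{Tmain} remains valid uniformly over the allowed family of boxes, and that the truncation defining $\mathfrak I_{\mathfrak B}(X)$ (cutting off the $\gamma$-integral at the edge of the major arcs) introduces only an $O(A(X)^sX^{-2}L^{-K})$ error for every box in the family — this uniformity is what lets one quantify over $\mathfrak B$ inside the statement. A secondary subtlety is ensuring the lower bound $\mathfrak I_{\mathfrak B}(X)\gg A(X)^sX^{-2}$ holds for the \emph{specific} adjusted box rather than just generically; this is why the theorem asserts the existence of a suitable $\mathfrak B$ in the window $\mathfrak C_1\subset\mathfrak B\subset\mathfrak C_2$ rather than claiming it for all such boxes, and I would handle it by first establishing the bound for the fixed box $\mathfrak C_1$ (where transversality is arranged by hand) and then noting the adjustment is too small to destroy a strict positive lower bound.
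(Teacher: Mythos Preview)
Your overall architecture is right and matches the paper: run the circle method with the summation region replaced by $X\mathfrak B$, observe that the minor-arc treatment and the major-arc approximation go through verbatim, and note that the singular series $\mathfrak S$ is region-independent. The paper's own proof says exactly this (``analogous to the proof above'') and then defers all the work to the analysis of $\mathfrak I_{\mathfrak B}(X)$.

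However, two points in your plan do not line up with what actually makes the argument work.

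First, the reason $\mathfrak B$ depends on $X$ has nothing to do with half-integer alignment or boundary bookkeeping. In the paper (Lemma~\ref{Lbox} and Definition~\ref{D boxB}) one takes $\mathfrak B=\bm x^{\ast}+[-\eta,\eta]^s$ where $\bm x^{\ast}$ is a solution of $f(\bm x)=t_0=t/X^2$, not of $f(\bm x)=0$. As $X$ varies, $t_0$ varies, hence $\bm x^{\ast}$ drifts inside a fixed window determined by the solution $\bm x_0$ of $f=0$; this is what produces the sandwich $\mathfrak C_1\subset\mathfrak B\subset\mathfrak C_2$. Centering at an exact solution of $f=t_0$ is what makes the Fourier-theorem change of variables $w=f(\bm x^{\ast}+\bm v)-t_0$ in Proposition~\ref{Pboundsing} give a clean evaluation $I_0=K(0)$. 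Your ``nudge by $O(1/X)$ to align faces'' is not needed and would not by itself deliver the lower bound.

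Second, and more seriously, your upper bound for $\mathfrak I_{\mathfrak B}(X)$ is a genuine gap. Trivial estimation of
\[
\mathfrak I_{\mathfrak B}(X)=X^{s-2}\int_{-P}^{P}\mathcal I(\gamma,\mathfrak B,X)e(-\gamma t_0)\,d\gamma
\]
gives only $\ll P\cdot A(X)^sX^{-2}=L^{B}A(X)^sX^{-2}$, with a spurious $L^{B}$. To remove it one must first \emph{complete} the $\gamma$-integral by proving decay $\mathcal I(\gamma,\mathfrak B,X)\ll \Psi(X)^sX^{-s}|\gamma|^{-2-\varepsilon}$. In the paper this is Lemma~\ref{Lcompsing}, which in turn rests on Lemma~\ref{LupbndJ}: one applies the Weyl estimate (Lemma~\ref{Lweyl}) to the \emph{trivial} sequence $b_x=1$ to bound the unweighted oscillatory integral $\mathcal J(\gamma,\mathfrak B)\ll|\gamma|^{-2-\varepsilon}$, and then repeated partial integration in each coordinate transfers this decay to the $\psi$-weighted integral $\mathcal I$. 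Only after completion does the Fourier-integral identity you quote become available, and then Proposition~\ref{Pboundsing} gives $I_0\asymp A(X)^sX^{-s}$, i.e.\ both the upper and the lower bound simultaneously. Your plan invokes the completed surface-integral formula for the lower bound but never explains how to pass from $[-P,P]$ to $(-\infty,\infty)$; that passage is precisely where Condition~\ref{CL1} and the smooth-approximability conditions on $\psi,\psi'$ are used on the archimedean side.
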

We will prove these results using the circle method. For $\bm x \in \ZZ^s$, we write $a_{\bm x}$ for $\prod_{i=1}^s a_{x_i}$. Hence,
\begin{equation}\label{EQ countingtocircle}
    R_{f, t}(X, \mathcal A) = \int_0^1\sum_{0 \leq \bm x \leq X} a_{\bm x}e(\alpha (f(\bm x)- t)) \text{d}\alpha.
\end{equation}
Indeed, one would then expect an asymptotic expression of the form $CX^{-2}A(X)^s$, where $C \geq 0$ can be interpreted as a product over the densities of solutions in the reals and $p$-adics.

In Section \ref{S setup and notation} we set up the notation and discuss smooth approximability of $A(X)$. 
In Section \ref{S weyl estimate} we give a Weyl-like estimate for the exponential sum with which we can then bound the minor arcs. This is done under Conditions \ref{CL1} and \ref{CL2}, where the first condition ensures that we can apply a Cauchy-Schwarz inequality before we start Weyl-differencing and the second condition implies that applying Cauchy-Schwarz gives strong upper bounds. 

In Section \ref{S Major arcs} we treat the major arcs in similar fashion as Biggs and Brandes did in \cite{biggs2023minimalistversioncirclemethod}. Here we will need Condition \ref{CD} to make sure we have consistent behavior as $X$ goes to infinity.

The analysis of the minor and major arcs already gives us an expression for $R_{f, t}(X, \mathcal A)$ with the correct error term. To interpret the main term we consider the $p$-adic and real behavior of $f$ separately in Sections \ref{S singular series} and \ref{S singular integral}. First we interpret our so-called singular series as the product of $p$-adic densities using Conditions \ref{CK} and \ref{CC}. Then we use our bounds on $A(X)$ to conclude the claims made about $\mathfrak I_{\mathfrak B}(X)$ in Theorem \ref{Tmainbox}.

Finally, in Section \ref{S main thm}, we prove Theorems \ref{Tmain} and \ref{Tmainbox} and in Section \ref{SExample} we give some examples of sequences $\mathcal A$ to which we can apply our theorem.
The purpose of this section is to see what these conditions can look like in practice and to show that they are reasonable. The reader may choose to read it in parallel with the rest of the paper.

\subsection*{Acknowledgments}
The authors would like to thank Damaris Schindler, Philippe Michel, Jörg Brüdern, Rok Havlas and Julia Brandes  for stimulating discussions and feedback. The first author is supported by the DFG Research Training Group 2491 `Fourier Analysis and
Spectral Theory' and the second author is supported by SNSF grant 200021-197045. The authors are grateful to the University of Göttingen and EPFL for their hospitality.

\section{Set-up and notation}\label{S setup and notation}
We write $e(z) = e^{2\pi i z}$ and $L = \log (X)$. For $\alpha\in \RR$ define the exponential sum $$S(\alpha) := \sum_{0 \leq \bm x \leq X}a_{\bm x} e(\alpha f(\bm x)).$$ Then $R_{f, t}(X, \mathcal A) = \int_0^1 S(\alpha) d\alpha.$ We split up the interval $[0, 1]$ into major arcs, where we will show that $S(\alpha)$ is big, and minor arcs, where we will show that $S(\alpha)$ is small. For this we use the following major and minor arcs. Let $P$ and $Q$ be two fixed parameters and take $1 \leq a \leq q \leq P$ two integers such that $(a, q) = 1$. We define the major arc $\mathscr M(a, q)$ to be the interval $[\frac aq - \frac 1{qQ}, \frac aq + \frac 1{qQ}].$ The major arcs $\mathscr M$ are then defined as the union over $a, q$ as above, that is
$$\mathscr M = \cup_{1 \leq q \leq P}\cup_{\substack{1 \leq a \leq q \\ (a, q) = 1}} \mathscr M_{a, q}.$$
The minor arcs $\mathfrak m$ are the complement of $\mathscr M$ in $[0, 1]$. It will suffice to take  $Q=X^2P^{-1}$ and $P = L^B$ for some sufficiently large $B>0$ depending on $K$ and $c$ of Condition \ref{CL2}. In Section \ref{S Major arcs} we also consider the slightly enlarged major arcs $\mathscr{N}(a,q)$ being the interval $[\frac aq - \frac 1{Q}, \frac aq + \frac 1{Q}]$ and their union $\mathscr N$. 

Furthermore, we use the following notation throughout the entire paper:

Let $R$ be some set. For $\bm x = (x_1, \ldots, x_s) \in R^s$ and any function $\phi$ with domain $R$ we write $\phi(\bm x)$ for $\prod_{i=1}^s \phi(x_i)$. For example, for $\bm h=(h_1,\ldots,h_s) \in (\ZZ/q\ZZ)^s$ we have $\kappa(q,\bm h)=\prod_{i=1}^s\kappa(q,h_i).$ Similarly, when writing $0 \leq \bm x \leq X$ we mean $0 \leq x_i \leq X$ for all $i$. 

By $f(x) = O(g(x)), f(x) \ll g(x)$ and $g(x) \gg f(x)$ we mean that there exists some constant $C > 0$ such that for all $x$ sufficiently large $|f(x)| \leq C|g(x)|$. We write $f(x)\asymp g(x)$ if $f(x)\ll g(x)$ and $f(x)\gg g(x)$.

The symbol $\varepsilon$ denotes an arbitrarily small positive constant. The exact value of the constant may change between occurrences and even between lines.

We write $\sumst_{a\mods q}$ for the sum over integers $a$ between $1$ and $q$ that are coprime to $q$.

We will write $\|x\|$ to mean the minimal distance of $x$ to an integer, in other words $\|x\| = \min_{n\in \ZZ} |x - n|$. 

\subsection*{Smooth approximability} 
For the treatment of the major arcs and the interpretation of the singular integral we need to work with a smoothed version of the weight function. For these purposes the naive idea of a connecting the dots function is not always good enough. This subsection is devoted to giving a suitable definition of smooth approximability. 
\begin{defi}\label{Dsmthaprox}
    We say that the function $X\mapsto A(X)$ can be smoothly approximated if for every $m \in \NN$ there exists some smooth function $\Psi$ with derivative $\psi$ satisfying the following properties:
\begin{enumerate}
     \item $\Psi(X)=A(X)+O\left(\frac{A(X)}{L^m}\right),$
  \item $ \psi(X)\asymp \frac {A(X)}{X},$
  \item $\int_{yX}^{zX}|\psi'(u)|du \ll_{y, z} \frac{A(X)}
{X}, \textup{ where } y, z \in \RR_{>0}. $

\end{enumerate}
\end{defi}

\begin{example}
Some examples of such functions $\Psi(X)$ are $X^kL^{\ell}$ for all $k > 0$ and $\ell \in \RR$ and $X\log(\log(X)) + XL^{-1}$.

In Section \ref{SExample} we discuss the case where $\mathcal A$ is indicator function of the primes. 
\end{example}

\begin{lem}\label{Lsmthaprox}
    Assume that $A(X)$ can be smoothly approximated by $\Psi(X)$. Then $$\Psi(bX) \asymp_{y, z} \Psi(X) \text{ for all $b \in (y, z)$ where $y, z \in \RR_{> 0}$}.$$
\end{lem}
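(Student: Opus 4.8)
The goal is to show that if $A(X)$ is smoothly approximable by $\Psi(X)$, then $\Psi(bX) \asymp_{y,z} \Psi(X)$ uniformly for $b \in (y,z)$. The natural strategy is to go through the derivative $\psi$ and integrate, since property (2) of Definition \ref{Dsmthaprox} pins down the size of $\psi$, and property (3) controls how much $\psi$ can fluctuate on intervals of the form $[yX, zX]$. The plan is to first prove an auxiliary statement: $\psi(bX) \asymp_{y,z} \psi(X)$ for $b \in (y,z)$, and then deduce the claim for $\Psi$ by writing $\Psi(bX) - \Psi(X) = \int_X^{bX}\psi(u)\,\mathrm{d}u$ and comparing with $\Psi(X) \asymp A(X)$.

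For the auxiliary statement, I would fix $b \in (y,z)$ and WLOG assume $b \geq 1$ (the case $b < 1$ follows by replacing $X$ with $bX$ and $b$ with $1/b$, after enlarging the interval $(y,z)$ to $(\min(y,1/z), \max(z,1/y))$). Then for any $u \in [X, bX] \subseteq [yX, zX]$ (assuming $y \le 1$; otherwise enlarge the interval to include $1$) we have
$$|\psi(u) - \psi(X)| = \left|\int_X^u \psi'(v)\,\mathrm{d}v\right| \leq \int_{yX}^{zX}|\psi'(v)|\,\mathrm{d}v \ll_{y,z} \frac{A(X)}{X} \ll \psi(X),$$
using property (3) and then property (2). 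Hence $\psi(u) = \psi(X) + O_{y,z}(\psi(X))$; this alone only gives an upper bound $\psi(u) \ll_{y,z} \psi(X)$ with an implied constant that could a priori swallow the lower bound. To get the lower bound $\psi(u) \gg_{y,z} \psi(X)$ one instead uses property (2) directly at the point $u = bX$: since $u \asymp_{y,z} X$, property (2) gives $\psi(bX) \asymp \frac{A(bX)}{bX}$, and one needs $A(bX) \asymp_{y,z} A(X)$. But that is essentially the statement we are proving, so this route is circular — the cleaner fix is to prove the lower bound for $\psi$ from the upper bound: apply the just-proved upper bound with the roles of the endpoints swapped, i.e. $\psi(X) \ll_{y,z} \psi(u)$ for $u \in [X, bX]$, which follows by the identical integral estimate started at $u$ instead of $X$ (valid since $[X,u] \subseteq [yX,zX]$ and $\frac{A(X)}{X} \asymp \psi(X) \asymp \psi(u)$... ). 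Let me restate this more carefully: from $|\psi(u)-\psi(X)| \le \int_{yX}^{zX}|\psi'| \ll_{y,z} A(X)/X \asymp \psi(X)$ we get both $\psi(u) \le C\psi(X)$ and $\psi(u) \ge \psi(X) - C\psi(X)$; the latter is only useful if $C < 1$, which we cannot assume. So the honest argument is: apply property (2) at both $X$ and $bX$ to get $\psi(X) \asymp A(X)/X$ and $\psi(bX) \asymp A(bX)/(bX)$, and separately establish $A(bX) \asymp_{y,z} A(X)$ — which is exactly what property (3) together with $\Psi(X) = A(X)+ O(A(X)/L^m)$ buys us. Concretely, $A(bX) = \Psi(bX) + O(A(bX)/L^m)$ and $\Psi(bX) = \Psi(X) + \int_X^{bX}\psi(u)\,\mathrm{d}u$, and the integral is $\le (b-1)X \cdot \sup_{[X,bX]}|\psi| \ll_{y,z} X \cdot A(X)/X = A(X)$ by the upper bound on $\psi$; combined with $\Psi(X) = A(X) + O(A(X)/L^m)$ this yields $\Psi(bX) \ll_{y,z} A(X)$, hence $A(bX) \ll_{y,z} A(X)$, and by symmetry (swapping $X \leftrightarrow bX$, $b \leftrightarrow 1/b$) also $A(X) \ll_{y,z} A(bX)$.

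The main obstacle, then, is avoiding circularity: the lower bounds $\psi(bX) \gg \psi(X)$ and $\Psi(bX) \gg \Psi(X)$ must be extracted without already knowing $A(bX) \asymp A(X)$. The resolution is to notice that the upper bound direction is "self-improving" — once we have $\psi(u) \ll_{y,z} A(X)/X$ for all $u \in [yX,zX]$ (immediate from property (2) plus the additivity trick, or directly: $\psi(u) \asymp A(u)/u$ and... no). I think the cleanest write-up is: (i) use property (3) to show $\psi$ varies by at most $O_{y,z}(A(X)/X)$ across $[yX,zX]$; (ii) combine with property (2) at the single anchor point $X$ to get $A(X)/X \ll \psi(X)$, hence a two-sided bound $\psi(u) = \psi(X) + O_{y,z}(A(X)/X)$ forces $\psi(u) \asymp_{y,z} A(X)/X$ provided we also know $\psi(u) \gg A(X)/X$ — and THAT lower bound comes from property (2) applied at $u$, namely $\psi(u) \gg A(u)/u$, together with the trivial monotonicity $A(u) \ge A(X)$ when $u \ge X$ (and $A(u) \ge A(yX)$, with $A(yX) \gg A(X)$ needing... ). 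Given the weight $a_x \ge 0$, $A$ is non-decreasing, so $A(u) \ge A(\lceil yX\rceil) $ for $u \ge yX$; the remaining point is $A(yX) \gg_{y} A(X)$, which is where one genuinely needs an argument — but by property (1), $A(yX) \asymp \Psi(yX) \asymp \Psi(X) \asymp A(X)$ if we already knew $\Psi(yX)\asymp\Psi(X)$... Ultimately I expect the slick route in the paper bootstraps from property (2) (which gives $\psi \asymp A(X)/X$ on the nose, both bounds, once we know $A(u) \asymp A(X)$ on $[yX,zX]$) by first proving the $A$-comparison via (3) as sketched above, so I would lead with establishing $A(bX) \asymp_{y,z} A(X)$ and then feed that back into (2) to finish. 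The whole thing is a few lines of $\varepsilon$-free estimates; the only subtlety worth flagging explicitly in the write-up is the reduction to $b \ge 1$ and the enlargement of $(y,z)$ to contain $1$.
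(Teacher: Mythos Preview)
Your core argument is correct but differs from the paper's. Once the false starts are pruned, your proof reads: combine property~(3) with the upper bound in property~(2) at the anchor point $X$ to get $|\psi(u)| \ll_{y,z} A(X)/X$ uniformly for $u\in[yX,zX]$; integrate to obtain $|\Psi(bX)-\Psi(X)| \ll_{y,z} A(X)$, hence the one-sided bound $\Psi(bX)\ll_{y,z}\Psi(X)$ (equivalently $A(bX)\ll_{y,z} A(X)$ via property~(1)); then recover the matching lower bound by the substitution $(X,b)\mapsto(bX,1/b)$, after symmetrising the interval to $(\min(y,1/z),\max(z,1/y))$ so that it is closed under $b\mapsto 1/b$. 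This is valid. The paper, by contrast, never invokes property~(3). It uses only (1), (2), and the monotonicity of $A$: from $|\psi(u)|\le c_3 A(u)/u$ and $A(u)\le A(z_0X)$ one obtains $\Psi(X)\ge\bigl(1-\tfrac{c_3}{c_1}\log z_0\bigr)\Psi(z_0X)$, which is a usable lower bound only when $z_0>1$ is close enough to $1$ that the bracket is positive; the general $z$ is then reached by iterating this inequality $k$ times with $z\le z_0^k$ (and symmetrically for $y<1$). So your route is more direct once the symmetry trick is in place but consumes the extra hypothesis~(3), while the paper's small-step iteration is more economical in its inputs. Your write-up would be much cleaner if you stated the three-step argument directly rather than narrating the circular attempts; as it stands the reader has to extract the working proof from several abandoned ones.
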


\begin{proof}
By Condition (1) and the monotonicity of $A(X)$ we have
$$\Psi(yX) \ll A(yX) \leq A(bX) \ll \Psi(bX) \ll A(bX) \leq A(zX) \ll \Psi(zX).$$
It therefore suffices to show that $\Psi(X) \ll_y \Psi(yX)$ and $\Psi(zX) \ll_z \Psi(X).$ We may furthermore assume that $y<1$ and $z>1$.
From Conditions (1) and (2) we know that there exist positive constants $c_i$ such that for $X$ big enough we have
$$c_1A(X)\leq \Psi(X)\leq c_2 A(X)\text{ and }|\psi(X)|\leq c_3\frac{A(X)}{X}.$$ 
We first consider the asymptotic upper bound for $\Psi(zX)$. Let $z_0>1$.
\begin{align*}
    \Psi(X) &= \Psi(z_0X) - \int_X^{z_0X} \psi(u) du\\
    &\geq \Psi(z_0X) - \int_X^{z_0X} |\psi(u)| du\\
    &\geq \Psi(z_0X) -  c_3 \int_X^{z_0X} \frac{A(u)}u du \\
    &\geq \Psi(z_0X) - c_3A(z_0X) \int_X^{z_0X} \frac{1}u du \\ 
    &\geq \Psi(z_0X) - \frac{c_3}{c_1}\Psi(z_0X) \log z_0\\
    &=\left(1-\frac{c_3}{c_1}\log z_0\right)\Psi(z_0X)
\end{align*}
Since $\frac{c_3}{c_1}$ is a positive constant, we can find $z_0>1$ such that $1-\frac{c_3}{c_1}\log z_0>0$. 
Since for all $z>1$ there exists a $k$ such that $z\leq z_0^k$ we can conclude by repeated application of the above inequality that 
\begin{equation*}\Psi(zX) \leq \left(\frac{1}{1-\frac{c_3}{c_1}\log z_0}\right)^k\Psi(X) \end{equation*}
and hence 
$$\Psi(zX)\ll_z\Psi(X).$$

Similarly we obtain for the lower bound with $y_0<1$ that
\begin{align*}
    \Psi(y_0X) &= \Psi(X) - \int_{y_0X}^{X} \psi(u) du\\
    &\geq \Psi(X) -  c_3 \int_{y_0X}^{X} \frac{A(u)}u du \\
    &\geq\left(1+\frac{c_3}{c_1}\log y_0\right)\Psi(X).
\end{align*}
Note that $\log y_0<0$.
Since $\frac{c_3}{c_1}$ is a positive constant, we can find $y_0<1$ such that $1+\frac{c_3}{c_1}\log y_0>0$. 
Since for all $y>1$ there exists a $k$ such that $y\geq y_0^k$ we can conclude by repeated application of the above inequality that 
\begin{equation*}\Psi(yX) \geq \left(1+\frac{c_3}{c_1}\log y_0\right)^k\Psi(X) \end{equation*}
and hence 
\begin{equation*}
\Psi(yX)\gg_y \Psi(X). \qedhere
\end{equation*}
    
\end{proof}

\section{A Weyl-like estimate and the minor arcs}\label{S weyl estimate}
In this section we will prove the following Weyl-like estimate and use it to bound the contribution of the minor arcs.
\begin{lem}\label{Lweyl}
Let $r := \rank(F_2)$ and $\alpha \in \RR$. Suppose Condition \emph{\ref{CL2}} holds and $\alpha = \frac aq + \lambda$ for some $a \in \ZZ$, $q \in \NN$ and $\lambda \in \RR$, then
    $$S(\alpha) \ll \max_{i, j}(F_{i, j})(A(X)L^C)^s\left(\frac 1X + \frac 1{q(1+X^2|\lambda|)} + \frac{q(1+X^2|\lambda|)}{X^2}\right)^{\frac r2},$$
    where the implicit constant depends only on $s$ and $r$ and $C$ is a constant depending on the constant $c$ in Condition \emph{\ref{CL2}}.
\end{lem}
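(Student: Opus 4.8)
The plan is to combine a Cauchy--Schwarz step that exploits the block structure of $F$ with a Weyl-differencing argument on a genuinely diagonal-ish subsystem. Writing $\bm x = (\bm u, \bm v)$ with $\bm u \in \ZZ^n$ and $\bm v \in \ZZ^{s-n}$, we have
\[
f(\bm x) = \bm u^T F_1 \bm u + \bm u^T(F_2 + F_3^T)\bm v + \bm v^T F_4 \bm v.
\]
Hence $S(\alpha) = \sum_{0\le\bm v\le X} a_{\bm v}\, e(\alpha\, \bm v^T F_4 \bm v)\sum_{0\le\bm u\le X} a_{\bm u}\, e(\alpha(\bm u^T F_1 \bm u + \bm u^T(F_2+F_3^T)\bm v))$. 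Applying Cauchy--Schwarz in the $\bm v$-variable and using the trivial bound $\sum_{0\le\bm v\le X}a_{\bm v}^2 \ll (A(X)L^c)^s X^{-(s-n)}$ from Condition \ref{CL2} (raised to the relevant power), I would bound $|S(\alpha)|^2$ by $(A(X)L^c)^s X^{-(s-n)}$ times
\[
\sum_{0\le\bm v\le X}\ \Bigl|\sum_{0\le\bm u\le X} a_{\bm u}\, e\bigl(\alpha(\bm u^T F_1 \bm u + \bm u^T(F_2+F_3^T)\bm v)\bigr)\Bigr|^2.
\]
Expanding the square introduces a second copy $\bm u'$ of the inner summation variable, and crucially the quadratic-in-$\bm u$ phase $\alpha\bm u^T F_1\bm u$ cancels against the $\bm u'$ copy only after a further Weyl shift; so the natural move is to first Weyl-difference the inner sum in $\bm u$ (replacing $\bm u \mapsto \bm u, \bm u+\bm h$), turning $\alpha \bm u^T F_1 \bm u$ into a linear phase $2\alpha\bm u^T F_1 \bm h$ plus lower-order terms, and then carry out the $\bm v$ sum.

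After these manipulations the inner-most object is, up to harmless factors, a product over the coordinates of linear exponential sums of the shape $\sum_{0\le x\le X} e(\alpha \gamma x)$ where $\gamma$ runs over the entries of the vector $(F_2 + F_3^T)^T\bm h$ obtained from the mixed block, plus the shift parameter $\bm h$ itself appears in a range of size $\asymp X$. The point of Condition \ref{CL1}, i.e. $r = \rank(F_2)\ge 5$, is that after choosing a suitable $r\times r$ non-singular submatrix of $F_2$ we genuinely get $r$ independent linear forms in $\bm h$, so that summing the geometric series and then summing over $\bm h$ yields a saving of $\bigl(X\min(1, 1/\|\alpha\gamma\|) \text{ averaged}\bigr)$ in $r$ of the variables. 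The standard counting lemma for $\sum_{h\le X}\min(X, \|\alpha\beta h\|^{-1})$ with $\alpha = a/q + \lambda$ gives a bound $\ll X\bigl(X^{-1} + (q(1+X^2|\lambda|))^{-1} + q(1+X^2|\lambda|)X^{-2}\bigr)$ for each such form, and multiplying $r$ of these together (with the coprimality/size bookkeeping to pass from $a/q$ to $\beta a/q$ absorbed into the dependence on $\max_{i,j}|F_{i,j}|$) produces exactly the factor $(\cdots)^{r}$ inside $|S(\alpha)|^2$, hence $(\cdots)^{r/2}$ for $S(\alpha)$. The remaining $s-r$ coordinates are estimated trivially by $A(X)L^c$ each, and collecting the $(A(X)L^c)^s$ prefactors (tracking that Cauchy--Schwarz and Weyl-differencing each at most double some exponents, so $c$ gets replaced by some larger $C = C(c)$) gives the stated bound.

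The main obstacle I anticipate is organizing the linear algebra so that the $r$ linear forms extracted from $F_2$ are simultaneously usable: one must choose an $r$-element subset $I$ of the $\bm u$-indices and an $r$-element subset $J$ of the $\bm v$-indices such that the corresponding $r\times r$ minor of $F_2$ is invertible, restrict the $\bm h$-sum to coordinates in $I$ and the $\bm v$-sum to coordinates in $J$ (estimating the complementary coordinates trivially, which is where most of the $(A(X)L^c)^s$ slack is spent), and then perform a change of variables on the $J$-block of $\bm v$ so that the phase becomes $\alpha \sum_{k\in J} \mu_k w_k$ with each $\mu_k$ a non-zero integer of size $\ll \max_{i,j}|F_{i,j}|^{O(r)}$ combined linearly with $\bm h$. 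A secondary nuisance is that $a/q$ need not be in lowest terms after scaling by $\mu_k$; this is handled by the usual observation that $\sum_{h\le X}\min(X,\|(\mu a/q) h\|^{-1}) \ll (\mu, q) \cdot [\text{same bound with } q/(\mu,q)]$, and since $\mu \ll \max_{i,j}|F_{i,j}|^{O(1)}$ this only costs a bounded factor that we fold into the implicit constant's dependence on $s$, $r$ and the stated $\max_{i,j}(F_{i,j})$ factor. Everything else — summing geometric series, bounding $\sum_{\bm h}$, and reassembling the product over coordinates — is routine.
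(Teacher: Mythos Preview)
Your opening Cauchy--Schwarz/differencing step is essentially Lemma~\ref{Ldifferencing} of the paper, and your worry about the quadratic phase $\bm u^T F_1 \bm u$ is misplaced: after expanding the square and swapping the order of summation, that term is independent of the geometric-series variable and has modulus one, so it drops out upon taking absolute values---no ``further Weyl shift'' is needed. The real gap is the endgame. The bound you quote,
\[
\sum_{h\le X}\min\bigl(X,\|\alpha\beta h\|^{-1}\bigr)\ \ll\ X\Bigl(\tfrac1X+\tfrac1{q(1+X^2|\lambda|)}+\tfrac{q(1+X^2|\lambda|)}{X^2}\Bigr),
\]
is not a standard counting lemma and is in fact false as stated: for $\alpha=0$, $q=1$, $\lambda=0$ the left side is $\asymp X^2$ while the right side is $\asymp X$. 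The usual divisor-type estimate only gives $\sum_{h\le X}\min(X,\|\alpha h\|^{-1})\ll X^2/q+(X+q)\log q$ under $|\alpha-a/q|\le 1/q^2$, with no $(1+X^2|\lambda|)$-dependence. That refinement is precisely what is needed on the minor arcs (when $q$ is small but $|\lambda|$ is not), and it does not come for free from elementary summation.

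The paper obtains it by a genuinely different route: it applies Lemma~\ref{Ldifferencing} in \emph{both} directions, multiplies the two bounds to control $|S(\alpha)|^4$, and converts the resulting product of $\min$'s into a lattice-point count $\mu(X)$ (Lemma~\ref{Lstomu}). The $\lambda$-refined estimate then comes from Davenport's shrinking lemma in the geometry of numbers (Lemma~\ref{Lmux}, citing \cite{Davenport2005} and \cite{HBcubicforms14var}), which yields $\mu(X)\ll(\rho X)^{-2r}X^s$ with $\rho$ built exactly to produce the factor $\bigl(\tfrac1X+\tfrac1{q(1+X^2|\lambda|)}+\tfrac{q(1+X^2|\lambda|)}{X^2}\bigr)$. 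Your $r\times r$ minor extraction and change of variables could in principle reduce to a one-variable sum, but you would still need a substitute for this geometry-of-numbers input to get the correct dependence on $\lambda$; without it the minor-arc bound in Lemma~\ref{Lminorarcs} collapses for $\alpha$ near rationals with small denominator.
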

\begin{rem}
    The proof is strongly based on the proof of Lemma 3.7 in \cite{Liu2011integralpoints}. For the sake of self-containedness we give a complete proof here.
\end{rem}

Recall that $n = \lfloor \frac s2 \rfloor$. We may express the vector $\bm x$ as $(\bm y, \bm z)$, where $\bm y$ is $n$-dimensional and $\bm z$ is $s-n$ dimensional. Then $f(\bm x)$ can be written as $$f(\bm x) = r(\bm y) + y_1g_1(\bm z) + \ldots + y_ng_n(\bm z) + q (\bm z),$$
where $r$ and $q$ are quadratic forms and $g_1, \ldots, g_n$ are linear forms. Similarly we can define linear forms $h_1, \ldots, h_{s-n}$ such that
$$f(\bm x) = r(\bm y) + z_1 h_1(\bm y) + \ldots + z_{s-n} h_{s-n}(\bm y) + q(\bm z).$$
\begin{lem}\label{Ldifferencing} Suppose we are in the situation of Lemma \ref{Lweyl}.
    We have 
    $$|S(\alpha)|^2 \ll (A(X)^2X^{-1}L^c)^{s}\sum_{\bm v \leq X} \prod_{j=1}^n\min\left(X, \frac{1}{\|\alpha g_j(\bm v)\|}\right),$$
    and
    $$|S(\alpha)|^2 \ll (A(X)^2X^{-1}L^c)^{s}\sum_{\bm v \leq X} \prod_{j=1}^{s-n}\min\left(X, \frac{1}{\|\alpha h_j(\bm v)\|}\right).$$
\end{lem}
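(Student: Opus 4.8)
The plan is to prove the first inequality in Lemma \ref{Ldifferencing}; the second follows by the symmetric roles of $\bm y$ and $\bm z$ (using the decomposition $f(\bm x) = r(\bm y) + z_1 h_1(\bm y) + \ldots + z_{s-n}h_{s-n}(\bm y) + q(\bm z)$ instead). First I would split the exponential sum according to the decomposition $f(\bm x) = r(\bm y) + y_1 g_1(\bm z) + \ldots + y_n g_n(\bm z) + q(\bm z)$, writing
$$S(\alpha) = \sum_{0 \le \bm z \le X} a_{\bm z}\, e(\alpha q(\bm z)) \sum_{0 \le \bm y \le X} a_{\bm y}\, e\bigl(\alpha (r(\bm y) + y_1 g_1(\bm z) + \ldots + y_n g_n(\bm z))\bigr).$$
Then I would apply Cauchy--Schwarz in the outer $\bm z$-sum, pulling out the factor $\sum_{0 \le \bm z \le X} a_{\bm z}^2$ which by Condition \ref{CL2} (applied coordinatewise, $s-n$ times) is $\ll (A(X)^2 X^{-1} L^c)^{s-n}$. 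This leaves $|S(\alpha)|^2 \ll (A(X)^2 X^{-1} L^c)^{s-n} \sum_{0 \le \bm z \le X} \bigl| \sum_{0 \le \bm y \le X} a_{\bm y}\, e(\ldots) \bigr|^2$, where I have harmlessly extended the outer sum to all $\bm z$ (all terms nonnegative).

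Next I would expand the square of the inner sum as a double sum over $\bm y, \bm y'$ and swap the order of summation so the $\bm z$-sum is innermost. The quadratic form $r(\bm y)$ and the weights $a_{\bm y}a_{\bm y'}$ factor out, and the $\bm z$-dependence is entirely in $e\bigl(\alpha \sum_j (y_j - y_j') g_j(\bm z)\bigr)$. Crucially, writing $\bm v = \bm y - \bm y'$, the inner sum over $\bm z$ becomes $\sum_{0 \le \bm z \le X} a_{\bm z}\, e\bigl(\alpha \sum_j v_j g_j(\bm z)\bigr)$, and since $\sum_j v_j g_j(\bm z)$ is a \emph{linear} form in $\bm z$ whose coefficients are linear in $\bm v$ — say $\sum_j v_j g_j(\bm z) = \sum_k z_k \ell_k(\bm v)$ for linear forms $\ell_k$ — this splits as a product $\prod_{k=1}^{s-n} \sum_{0 \le z_k \le X} a_{z_k} e(\alpha z_k \ell_k(\bm v))$. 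For each factor I would bound trivially $\sum_{0 \le z_k \le X} a_{z_k} e(\alpha z_k \ell_k(\bm v)) \le A(X)$. Here I should be a little careful: to get the claimed shape I actually want to bound by $A(X) \min\bigl(1, \|\alpha \ell_k(\bm v)\|^{-1} X^{-1}\bigr)$ in the relevant variables, but since the lemma only asks for a product over $j = 1, \ldots, n$ of $\min(X, \|\alpha g_j(\bm v)\|^{-1})$, it suffices to keep track of the $n$ coordinates of $\bm y - \bm y'$ and bound the remaining structure crudely. Actually the cleanest route is: after Cauchy--Schwarz and expansion, collapse the $\bm y'$ sum by substituting $\bm v = \bm y - \bm y'$, bound $a_{\bm y} a_{\bm y'} \ll$ by a shifted-convolution trick or simply by noting $\sum_{\bm y} a_{\bm y} a_{\bm y - \bm v} \ll (A(X)^2 X^{-1} L^c)^n$-type bounds are not available pointwise in $\bm v$ — so instead I keep $a_{\bm y'}$ and extract $\sum_{\bm y'} a_{\bm y'} \ll A(X)^n$, leaving $\sum_{\bm v \le X}$ of the $\bm z$-sum bounded using $a_{\bm y} \le A(X)^n / (\text{something})$...

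Let me restate the clean approach: after Cauchy--Schwarz, write $\sum_{\bm z}|\sum_{\bm y} a_{\bm y} e(\ldots)|^2 = \sum_{\bm y, \bm y'} a_{\bm y} a_{\bm y'} e(\alpha(r(\bm y) - r(\bm y'))) \sum_{\bm z} a_{\bm z} e(\alpha \textstyle\sum_j (y_j - y_j')g_j(\bm z))$. Bound $a_{\bm y}, a_{\bm y'} \le$ by pulling the $a_{\bm z}$-sum out in absolute value, giving $\le \sum_{\bm y, \bm y'} a_{\bm y} a_{\bm y'} \bigl| \sum_{\bm z} a_{\bm z} e(\alpha \sum_j (y_j-y_j')g_j(\bm z))\bigr|$. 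The $\bm z$-sum factors over coordinates of $\bm z$ into $\prod_k |\sum_{z_k} a_{z_k} e(\alpha z_k \ell_k(\bm v))|$, each factor $\le \min(A(X), \|\alpha \ell_k(\bm v)\|^{-1} \max_j F_{ij})$ by partial summation against the smooth-ish monotone $A$. Summing $\sum_{\bm y, \bm y'} a_{\bm y}a_{\bm y'}$ contributes, after fixing $\bm v = \bm y - \bm y'$ and summing the free variable, a factor $A(X)^n \cdot (\text{per-}v_j \text{ weight})$; since we want a clean $A(X)^{2n} = A(X)^s$-ish total (absorbing $L$ powers), I bound $\sum_{\bm y'} a_{\bm y'} \le A(X)^n$ and $a_{\bm y} \le 1$ pointwise is too lossy — rather use $a_{\bm y} \le A(X)$ per coordinate giving $A(X)^n$ more, but then the $\bm v$-sum is unweighted $\sum_{\bm v \le X}$. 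Combining: $|S(\alpha)|^2 \ll (A(X)^2 X^{-1} L^c)^{s-n} \cdot A(X)^{2n} \sum_{\bm v \le X} \prod_{j=1}^n \min(X, \|\alpha g_j(\bm v)\|^{-1})$, and since $s - n \le n$ when... no, $s - n = \lceil s/2 \rceil \ge n$. To reconcile with the stated bound $(A(X)^2 X^{-1} L^c)^s$ I instead apply Condition \ref{CL2} in \emph{all} $s$ coordinates from the start: use it on the $s-n$ coordinates of $\bm z$ via Cauchy--Schwarz, and on the $n$ coordinates of $\bm y$ by noting $\sum_{\bm y, \bm y'} a_{\bm y} a_{\bm y'}(\cdots)$ can be handled by a further Cauchy--Schwarz in $\bm y$ or by a diagonal argument, extracting $(\sum_{\bm y} a_{\bm y}^2)$ at the cost of restricting or averaging over $\bm v$ — this is exactly the step where I expect the main technical friction.

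\medskip

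\noindent\textbf{Summary of the main obstacle.} The genuinely delicate point is getting the \emph{power of $A(X)$ correct} — namely $(A(X)^2 X^{-1} L^c)^s$ rather than something like $A(X)^s \cdot (A(X)^2 X^{-1} L^c)^{s-n}$. This forces Condition \ref{CL2} to be invoked on all $s$ variables, which means a Cauchy--Schwarz (or van der Corput-type differencing) step on the $\bm y$-variables too, producing the difference $\bm v = \bm y - \bm y'$ over which the final sum ranges, while simultaneously converting the inner $\bm z$-sum into the product of $\min(X, \|\alpha g_j(\bm v)\|^{-1})$ terms via partial summation against the monotone weight $A$. Once that bookkeeping is arranged — matching the $L^c$ exponents and checking that extending incomplete sums to $\bm v \le X$ only loses nonnegative terms — the two displayed inequalities follow in parallel, the second by repeating the argument with the roles of $\bm y$ and $\bm z$ (equivalently $F_2$ and $F_3 = F_2^T$) interchanged, which is why $r = \rank(F_2) = \rank(F_3)$ governs both. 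I would mirror the structure of the proof of Lemma~3.7 in \cite{Liu2011integralpoints}, substituting our Condition \ref{CL2} bound $\sum_{x \le X} a_x^2 \ll A(X)^2 X^{-1} L^c$ wherever their argument uses the corresponding mean-square estimate for the von Mangoldt function.
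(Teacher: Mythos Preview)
There is a bookkeeping error that derails the argument, and the fix is precisely the step you dismiss as unavailable.

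After you apply Cauchy--Schwarz in the outer $\bm z$-sum, the weights $a_{\bm z}$ are absorbed into the factor $\sum_{\bm z} a_{\bm z}^2$ and \emph{disappear} from the remaining sum: what is left is $\sum_{0\le\bm z\le X}\bigl|\sum_{\bm y}a_{\bm y}e(\ldots)\bigr|^2$ with an \emph{unweighted} $\bm z$-sum. So when you expand the square over $\bm y,\bm y'$ and swap, the innermost sum is $\sum_{0\le\bm z\le X}e\bigl(\alpha\sum_j(y_j-y_j')g_j(\bm z)\bigr)$ with no $a_{\bm z}$ present. This is a pure geometric sum that factors over the coordinates of $\bm z$ and gives $\prod_k\min(X,\|\alpha h_k(\bm v)\|^{-1})$ directly --- no partial summation against $A$ is needed. (Note also that with your choice of which block to Cauchy--Schwarz first you are actually proving the \emph{second} inequality: since $\sum_j v_j g_j(\bm z)=\sum_k z_k h_k(\bm v)$, differencing in $\bm y$ produces an $n$-dimensional $\bm v$ and a product over $k=1,\ldots,s-n$. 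The paper obtains the first inequality by the mirror: Cauchy--Schwarz in $\bm y$, difference in $\bm z$.)

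The second gap is the claim that $\sum_{\bm y}a_{\bm y}a_{\bm y-\bm v}\ll(A(X)^2X^{-1}L^c)^n$ is ``not available pointwise in $\bm v$''. It is: by $ab\le\tfrac12(a^2+b^2)$ (or Cauchy--Schwarz) one has $\sum_{\bm y}a_{\bm y}a_{\bm y-\bm v}\le\sum_{\bm y}a_{\bm y}^2$ \emph{uniformly} in $\bm v$, and Condition~\ref{CL2} applied coordinatewise gives exactly $(A(X)^2X^{-1}L^c)^n$. This is the second application of Condition~\ref{CL2} you were searching for; combined with the $(A(X)^2X^{-1}L^c)^{s-n}$ from the first Cauchy--Schwarz it yields the stated power $(A(X)^2X^{-1}L^c)^s$. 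The paper's proof is exactly this two-step use of Condition~\ref{CL2}: once via Cauchy--Schwarz on one block of variables, once via the shifted-correlation bound $\sum a_{\bm z}a_{\bm z+\bm v}\le\sum a_{\bm z}^2$ on the other, with the now-unweighted block evaluated as a product of geometric sums.
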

\begin{proof}
    We rewrite the exponential sum $S(\alpha)$ as 
    $$S(\alpha) = \sum_{0 \leq \bm y \leq X}\sum_{0\leq \bm z \leq X} a_{\bm y}a_{\bm z} e(\alpha (f(\bm y, \bm z))).$$
    We can now start with applying Cauchy-Schwarz and Condition \ref{CL2} to find that
    \begin{align*}
    |S(\alpha)|^2 &\ll (A(X)^2X^{-1}L^c)^n \sum_{0 \leq \bm y \leq X}\left| \sum_{0 \leq \bm z \leq X} a_{\bm z}e(\alpha f(\bm y, \bm z))\right|^2 \\
    &= (A(X)^2X^{-1}L^c)^n \sum_{0 \leq \bm y \leq X}\sum_{0 \leq \bm z \leq X}\sum_{0 \leq \bm z' \leq X} a_{\bm z} a_{\bm z'} e(\alpha(f(\bm y, \bm z') - f(\bm y, \bm z))).
    \end{align*}
    We define $\bm v$ such that $\bm z' = \bm z + \bm v$ and note that 
    $$f(\bm y, \bm z + \bm v) - f(\bm y, \bm z) = \sum_{j=1}^n y_jg_j(\bm v) + q(\bm z + \bm v) - q(\bm z).$$
    Since $q(\bm z)$ and $q(\bm z + \bm v)$ do not depend on $\bm y$ and the absolute value of $e(\alpha(q(\bm z + \bm v) - q(\bm z))$ equals $1$ we find that
    $$|S(\alpha)|^2 \ll (A(X)^2X^{-1}L^c)^n\sum_{|\bm v|\leq X}\sum_{\bm z}a_{\bm z}a_{\bm z + \bm v}\left|\sum_{0 \leq \bm y \leq X} e\left(\alpha\sum_{j=1}^ny_jg_j(\bm v)\right)\right|,$$
    where the sum over $\bm z$ is taken such that both $\bm z$ and $\bm z + \bm v$ lie in the range $[0, X]$. Note that $\sum_{\bm z} a_{\bm z}a_{\bm z + \bm v} \leq \sum_{\bm z} a_{\bm z}^2$ and recall the standard inequality 
$$\sum_{0 \leq y \leq X}e(\alpha yg)\ll \min\left(X,\frac{1}{\|\alpha g\|}\right).$$
    Applying Condition \ref{CL2} a second time we get
    $$|S(\alpha)|^2 \ll (A(X)^2X^{-1}L^c)^{s}\sum_{|\bm v|\leq X} \prod_{j=1}^n \min\left(X, \frac{1}{\|\alpha g_j(\bm v)\|}\right).$$
    Analogously we can show that 
    \begin{equation*}
|S(\alpha)|^2 \ll (A(X)^2X^{-1}L^c)^s \sum_{|\bm v|\leq X} \prod^{s-n}_{j=1} \min\left( X, \frac 1 {\|\alpha h_j(\bm v) \|} \right).       
          \qedhere \end{equation*}
\end{proof}
\begin{lem} \label{Lstomu}
    Write $\bm x = (\bm y, \bm z)$ as before and define $H_j(\bm x) := g_j(\bm y)$ for all $1 \leq j \leq n$ and $H_j(\bm x) := h_{j-n}(\bm z)$ for all $n+1 \leq j \leq s$. 
    Let $\mu(X)$ be the number of $\bm x $ satisfying 
    $$|\bm x| \leq X,\ \|\alpha H_j(\bm x)\| \leq \frac 1X \text{ for all }1 \leq j \leq s$$
    Suppose that we are in the situation of Lemma \ref{Lweyl}. Then
    \[|S(\alpha)|^4 \ll A(X)^{4s}X^{-s}L^{2cs+s}\mu(X). \]
\end{lem}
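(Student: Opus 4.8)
The plan is to obtain the fourth power of $S(\alpha)$ by multiplying the two estimates of Lemma~\ref{Ldifferencing} against one another. Writing $\bm x=(\bm y,\bm z)$ with $\bm y\in\ZZ^n$ and $\bm z\in\ZZ^{s-n}$, the first estimate bounds $|S(\alpha)|^2$ by $(A(X)^2X^{-1}L^c)^s\sum_{|\bm y|\le X}\prod_{j=1}^n\min(X,\|\alpha g_j(\bm y)\|^{-1})$ and the second bounds $|S(\alpha)|^2$ by $(A(X)^2X^{-1}L^c)^s\sum_{|\bm z|\le X}\prod_{j=1}^{s-n}\min(X,\|\alpha h_j(\bm z)\|^{-1})$. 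Multiplying these, and using that $g_j$ depends only on the $\bm y$-block and $h_j$ only on the $\bm z$-block, the product of the two right-hand sums is exactly $\sum_{|\bm x|\le X}\prod_{j=1}^s\min(X,\|\alpha H_j(\bm x)\|^{-1})$ with $H_j$ as in the statement. Thus
\begin{equation*}
|S(\alpha)|^4\ll (A(X)^2X^{-1}L^c)^{2s}\,T,\qquad T:=\sum_{|\bm x|\le X}\prod_{j=1}^s\min\!\left(X,\frac{1}{\|\alpha H_j(\bm x)\|}\right),
\end{equation*}
and since $(A(X)^2X^{-1}L^c)^{2s}=A(X)^{4s}X^{-2s}L^{2cs}$, it remains to show $T\ll X^sL^s\mu(X)$.

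To bound $T$ I would decompose each factor dyadically. For each $j$ partition the $\bm x$ according to the dyadic size of $\|\alpha H_j(\bm x)\|$, assigning an integer $\ell_j\in\{1,\dots,\ell_{\max}\}$ with $\ell_{\max}:=\lceil\log_2 X\rceil$, where all $\bm x$ with $\|\alpha H_j(\bm x)\|\le 2^{-\ell_{\max}}$ (in particular those with $\|\alpha H_j(\bm x)\|=0$) receive $\ell_j=\ell_{\max}$. On each resulting piece $\min(X,\|\alpha H_j(\bm x)\|^{-1})\ll 2^{\ell_j}$, so
\begin{equation*}
T\ll\sum_{\bm\ell}\Big(\prod_{j=1}^s 2^{\ell_j}\Big)N(\bm\ell),\qquad N(\bm\ell):=\#\big\{\bm x\in\ZZ^s:\ |\bm x|\le X,\ \|\alpha H_j(\bm x)\|\le 2^{-\ell_j}\ \text{for all }j\big\},
\end{equation*}
where $\bm\ell$ runs over $\ell_{\max}^s=O(L^s)$ tuples. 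The crucial input is a Davenport-type comparison for the integer points satisfying a system of linear inequalities: for integers $1\le k_j\le X$,
\begin{equation*}
\#\big\{\bm x\in\ZZ^s:\ |\bm x|\le X,\ \|\alpha H_j(\bm x)\|\le k_j^{-1}\ \forall j\big\}\ll\frac{X^s}{k_1\cdots k_s}\,\mu(X),
\end{equation*}
which says that relaxing each defining inequality of $\mu(X)$ from $1/X$ to $1/k_j$ inflates the count by at most the volume factor $\prod_j(X/k_j)$. Granting this with $k_j=2^{\ell_j}$ gives $\big(\prod_{j}2^{\ell_j}\big)N(\bm\ell)\ll X^s\mu(X)$ uniformly in $\bm\ell$, and summing over the $O(L^s)$ tuples yields $T\ll X^sL^s\mu(X)$, hence
\begin{equation*}
|S(\alpha)|^4\ll A(X)^{4s}X^{-2s}L^{2cs}\cdot X^sL^s\mu(X)=A(X)^{4s}X^{-s}L^{2cs+s}\mu(X),
\end{equation*}
as claimed.

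The one genuine ingredient here is the displayed counting inequality for $N(\bm\ell)$; everything else is bookkeeping (counting the dyadic tuples, absorbing the $O(1)$ factors, and lumping the scales with $\|\alpha H_j(\bm x)\|\le 1/X$, including the vanishing case). I would deduce it from the standard dilation lemma for lattice points in systems of linear inequalities — the same device used in the proof of Lemma~3.7 of \cite{Liu2011integralpoints} and, in essence, in Davenport's \emph{Analytic Methods for Diophantine Equations and Diophantine Inequalities} — whose proof tiles the box $|\bm x|\le X$ by translates, exploits the linearity of the $H_j$ to relate each translate's contribution to the symmetric region defining $\mu(X)$, and uses that the difference set of that region has comparable cardinality. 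This comparison lemma is where I expect the real work to sit, although for the present statement it may simply be quoted.
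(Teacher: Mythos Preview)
Your argument is correct and follows the same overall shape as the paper's: both multiply the two estimates of Lemma~\ref{Ldifferencing} to reduce to bounding $T=\sum_{|\bm x|\le X}\prod_{j=1}^s\min(X,\|\alpha H_j(\bm x)\|^{-1})$ by $(XL)^s\mu(X)$. The only difference lies in how this last bound is obtained. Rather than a dyadic decomposition together with a quoted Davenport-type scaling inequality, the paper partitions $[0,X]^s$ directly according to the integers $r_j=\lfloor X\{\alpha H_j(\bm x)\}\rfloor\in\{0,\dots,X\}$, observes that on each fibre any two points $\bm x,\bm x'$ satisfy $\|\alpha H_j(\bm x-\bm x')\|\le 1/X$ (by linearity of $H_j$) so that each fibre has cardinality at most $\mu(X)$, and then sums $\prod_j\min(X,X/(r_j+1))\ll(XL)^s$. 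This is exactly the ``difference-set'' mechanism underlying the lemma you invoke, carried out inline; it makes the proof self-contained and sidesteps both the dyadic bookkeeping and the need to cite an external counting result.
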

\begin{proof}
    Without loss of generality we will only count the $\bm x$ within the box $[0, X]^s$ for which $\|\alpha H_j(\bm x)\| \leq \frac 1X$. This will lead to the constant $2^s$ appearing later on. For any integers $r_1, \ldots, r_s$ between $0$ and $X$ define $\mu_0(X, r_1, \ldots, r_s)$ to be the number of $\bm x$ inside $[0, X]^s$ such that 
    $$\{\alpha H_j(\bm x)\} \in \left[\frac{r_j}{X}, \frac{r_j + 1}{X}\right) \textup{ for all }1 \leq j \leq s.$$
    Then for any two points $\bm x$ and $\bm x'$ meeting these conditions we find that $$\|\alpha H_j(\bm x- \bm x')\| \leq \frac 1X$$ and we may conclude that $\mu_0(X, r_1, \ldots, r_s) \leq \mu(X)$ for any choice of $\bm r$. 
    By partitioning $[0, X]^s$ based on the value of $\bm r$ each element corresponds to, we find
    \begin{align*}
    \sum_{|\bm x| \leq X}\prod_{j=1}^s\min\left(X, \frac{1}{\|\alpha H_j(\bm x)\|}\right) & \ll 2^s\sum_{r_1 \leq X} \dots \sum_{r_s \leq X} \mu_0(X, r_1, \ldots, r_s)\prod_{j=1}^s \min\left(X, \frac{X}{r_j + 1}\right) \\
    & \ll \mu(X)\left(\sum_{r \leq X} \frac X{r+1}\right)^s \\
    & \ll \mu(X)(XL)^s.
    \end{align*}
The result now follows by Lemma \ref{Ldifferencing}, because 
\[|S(\alpha)|^4 \ll A(X)^{4s}X^{-2s}L^{2sc}\sum_{|\bm v|\leq X}\prod_{j=1}^s\min\left(X, \frac{1}{\|\alpha H_j(\bm v)\|}\right).\qedhere \]
\end{proof}
The next step will be to estimate $\mu(X)$. Note that $\mu(X)$ does not depend on our choice of $\mathcal A$ and therefore we can follow exactly the same argument as in \cite{Liu2011integralpoints}. The following lemma summarizes Lemma 3.4 and the beginning of the proof of Lemma 3.6 in \cite{Liu2011integralpoints}. 
\begin{lem} \label{Lmux}
Let $H_j$ and $\mu(X)$ be defined as in Lemma \ref{Lstomu} and $q \in \ZZ$, $X > 0$ and $\lambda \in \RR$. Define $$\rho := \frac{1}{4\|f\|}\min\left(1, \frac 1{qX|\lambda|}, \frac Xq ,\frac{q(1+X^2|\lambda|}X\right)$$ and let $\mu(\rho, X)$ be the number of $\bm x$ for which $|\bm x|\leq \rho X$ and $\|\alpha H_j(\bm x) \| \leq \frac \rho X$ for all $1 \leq j \leq s$. Then
$$\mu(X) \ll \frac{\mu(\rho, X)}{\rho^s} \ll (\rho X)^{-2r}X^s,$$
where the implicit constant only depends on $r$ and $s$. 
\end{lem}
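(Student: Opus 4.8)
The plan is to prove Lemma \ref{Lmux} in two steps: first a \emph{shrinking step} that replaces the box $[0,X]^s$ and the gaps $\frac1X$ by the scaled box $[0,\rho X]^s$ and gaps $\frac{\rho}{X}$, at the cost of a factor $\rho^{-s}$; and then a \emph{counting step} that bounds $\mu(\rho,X)$ by $(\rho X)^{s-2r}$ using the rank hypothesis $r=\rank(F_2)$. Since $\mu(X)$ is purely a counting quantity independent of $\mathcal A$, this is exactly the content of Lemma 3.4 and (part of) Lemma 3.6 of \cite{Liu2011integralpoints}, so I would follow that argument closely.

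For the shrinking step, the standard device is to cover $[0,X]^s$ by $O(\rho^{-s})$ translates of the smaller box $[0,\rho X]^s$. If $\bm x, \bm x'$ both lie in one such translate and both satisfy $\|\alpha H_j(\cdot)\|\le \frac1X$, then $\bm x-\bm x'$ lies in $[-\rho X,\rho X]^s$ and, since the $H_j$ are linear, $\|\alpha H_j(\bm x-\bm x')\| \le 2\|f\|\rho X\cdot\frac1X \le \frac{\rho}{X}$ once $\rho\le \frac{1}{4\|f\|}$ (which is forced by the definition of $\rho$). Hence each translate contains at most $\mu(\rho,X)$ admissible points, giving $\mu(X)\ll \rho^{-s}\mu(\rho,X)$. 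The precise choice of $\rho$, in particular the four terms in the minimum, is dictated by what the second step needs: one wants $\rho X$ small enough that on the short box the fractional part $\{\alpha H_j(\bm x)\}$ behaves like the genuine real number $\alpha H_j(\bm x)$ with no wraparound, and the four quantities $1$, $\frac1{qX|\lambda|}$, $\frac Xq$, $\frac{q(1+X^2|\lambda|)}{X}$ are exactly the thresholds coming from writing $\alpha=\frac aq+\lambda$ and analyzing the linear forms $g_j,h_j$ on the two coordinate blocks.

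For the counting step, the point is that among the $s$ forms $H_1,\dots,H_s$, the block $H_{n+1},\dots,H_s$ are the linear forms $h_1(\bm y),\dots,h_{s-n}(\bm y)$ whose coefficient matrix is (up to transpose) $F_2$, so $r$ of them are linearly independent in $\bm y$; symmetrically $r$ of $g_1(\bm z),\dots,g_n(\bm z)$ are independent in $\bm z$. On the small box the condition $\|\alpha H_j(\bm x)\|\le \frac{\rho}{X}$ together with the size constraint $|\bm x|\le \rho X$ forces each such $\alpha H_j(\bm x)$ to be genuinely small (no integer rounding), and unfolding $\alpha H_j(\bm x)$ as a linear form in $\bm y$ (resp. $\bm z$) with the $\frac aq$ and $\lambda$ parts separated shows that the admissible $\bm y$ lie in $O((\rho X)^{n-r})$ residue-and-interval classes, and likewise for $\bm z$; multiplying gives $\mu(\rho,X)\ll (\rho X)^{s-2r}$, hence $\mu(X)\ll \rho^{-s}(\rho X)^{s-2r}=(\rho X)^{-2r}X^s$.

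The main obstacle, and the only genuinely delicate point, is the bookkeeping in the counting step: one must check that on the scaled box the chosen $\rho$ really does prevent the fractional-part condition from allowing spurious solutions coming from the $\frac aq$-periodicity of $e(\alpha H_j(\bm x))$ — this is where all four terms of the minimum in the definition of $\rho$ get used, according to the relative sizes of $q$, $X$ and $X^2|\lambda|$ — and that the rank-$r$ independence of the relevant sub-collection of the $H_j$ translates into the claimed savings of $(\rho X)^{-2r}$ rather than just $(\rho X)^{-r}$ (the factor $2$ because one gets rank $r$ on \emph{each} of the two coordinate blocks). Since this is precisely the argument of Lemmas 3.4 and 3.6 of \cite{Liu2011integralpoints}, carried out there for $a_x\equiv1$ and unaffected by the weights, I would cite it for the routine estimates and only spell out the dictionary between their notation and ours.
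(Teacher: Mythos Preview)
Your overall plan and your choice of references match the paper exactly: the paper's proof is nothing more than a pointer to Davenport's Lemma~12.6 in \cite{Davenport2005} and Heath-Brown's Lemma~2.3 in \cite{HBcubicforms14var}, which together are precisely Lemmas~3.4 and~3.6 of \cite{Liu2011integralpoints}. Your sketch of the counting step (rank~$r$ on each coordinate block, hence a saving of $(\rho X)^{-2r}$) is also the right picture.

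However, the mechanism you describe for the shrinking step is wrong, and if you wrote it out as stated it would not prove $\mu(X)\ll\rho^{-s}\mu(\rho,X)$. Covering $[-X,X]^s$ by $O(\rho^{-s})$ translates of a box of side $\rho X$ and differencing two admissible points $\bm x,\bm x'$ in the same translate gives only
\[
\|\alpha H_j(\bm x-\bm x')\|\le \|\alpha H_j(\bm x)\|+\|\alpha H_j(\bm x')\|\le \tfrac{2}{X},
\]
not $\rho/X$; your displayed inequality ``$\|\alpha H_j(\bm x-\bm x')\|\le 2\|f\|\rho X\cdot\tfrac1X\le \rho/X$'' does not follow from either constraint (and the last step would require $2\|f\|\le 1/X$). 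So the box--covering argument shrinks the \emph{spatial} scale but not the \emph{tolerance} on $\|\alpha H_j\|$, and what you obtain is not $\mu(\rho,X)$. The actual argument is Davenport's successive-minima lemma: the $s$ linear forms $\alpha H_j$ have a \emph{symmetric} coefficient matrix $\bigl(\begin{smallmatrix}0 & 2F_2\\ 2F_2^{T} & 0\end{smallmatrix}\bigr)$, and for symmetric systems Lemma~12.6 of \cite{Davenport2005} (equivalently Heath-Brown's Lemma~2.3) scales the box and the tolerance simultaneously, yielding $N(1)\ll\rho^{-s}N(\rho)$ with exponent $s$ rather than $2s$. Since you intend to cite Liu for the details anyway this does not break your proof, but the heuristic you give for the first inequality should be replaced by the geometry-of-numbers one.
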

The proof of Lemma \ref{Lmux} uses the geometry of numbers of Davenport, Lemma 12.6 in \cite{Davenport2005}, and Lemma 2.3 in \cite{HBcubicforms14var}. 

We are now ready to prove our Weyl-like estimate from the beginning of this section.
\begin{proof}[Proof of Lemma \ref{Lweyl}]
By Lemmata \ref{Lstomu} and \ref{Lmux} we know that
$$|S(\alpha)|^4 \ll (\rho X)^{-2r} A(X)^{4s}L^C.$$ 
The definition of $\rho$ as in Lemma \ref{Lmux} implies that
$$(\rho X)^{-2r} \ll \|f\|^{2r}\left(\frac 1X + \frac 1{q(1+X^2|\lambda|)} + \frac{1(1+X^2|\lambda|}{X^2}\right)^{2r}.$$
The result follows.
\end{proof}
\begin{lem}\label{Lminorarcs}
    Let $K > 0$ be fixed and suppose Conditions \emph{\ref{CL1}} and \emph{\ref{CL2}} both hold. Then, for $P = L^B$ with $B$ sufficiently large we find that 
    $$\int_\mathfrak m |S(\alpha)| d\alpha \ll A(X)^sX^{-2}L^{-K}.$$
\end{lem}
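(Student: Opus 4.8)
The strategy is a standard dyadic decomposition of the minor arcs combined with the Weyl-type estimate of Lemma \ref{Lweyl}. First I would use Dirichlet's approximation theorem: for every $\alpha \in [0,1]$ there are coprime $a, q$ with $1 \le q \le Q = X^2 P^{-1}$ and $|\alpha - a/q| \le \frac{1}{qQ}$, so that $\alpha = \frac aq + \lambda$ with $|\lambda| \le \frac{1}{qQ}$. If $\alpha$ lies in the minor arcs $\mathfrak m$, then by definition of $\mathscr M$ we must have $q > P$ (otherwise the pair $(a,q)$ would place $\alpha$ inside a major arc $\mathscr M(a,q)$, since $\frac{1}{qQ} \le \frac{1}{qQ}$). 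For such $\alpha$, write $T := 1 + X^2|\lambda|$ and observe $1 \le T \le 1 + X^2/(qQ) = 1 + P/q \le 2$. Feeding $q > P$ and $T \asymp 1$ into Lemma \ref{Lweyl} and recalling $r = \rank(F_2) \ge 5$ by Condition \ref{CL1}, the three terms in the bracket are $\frac 1X$, $\frac{1}{q(1+X^2|\lambda|)} \le \frac 1P$, and $\frac{q(1+X^2|\lambda|)}{X^2} \le \frac{2q}{X^2} \le \frac{2Q}{X^2} = \frac{2}{P}$, so each is $\ll 1/P = L^{-B}$ (using $P \le X$ for large $X$). Hence
\begin{equation*}
    S(\alpha) \ll_{f,s} (A(X)L^C)^s P^{-r/2} = (A(X)L^C)^s L^{-Br/2}
\end{equation*}
uniformly on $\mathfrak m$, where $C$ depends only on the constant $c$ of Condition \ref{CL2}.

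Next I would integrate this pointwise bound. Since $\mathfrak m \subset [0,1]$ has measure at most $1$, we get directly
\begin{equation*}
    \int_{\mathfrak m} |S(\alpha)|\, d\alpha \ll_{f,s} A(X)^s L^{Cs - Br/2}.
\end{equation*}
We want this to be $\ll A(X)^s X^{-2} L^{-K}$. The factor $X^{-2}$ is the obstacle: the naive pointwise bound above loses $X^2$ compared to the target. To recover it, rather than bounding the measure of $\mathfrak m$ by $1$ I would exploit the $\lambda$-dependence in Lemma \ref{Lweyl} more carefully via a dyadic decomposition. Partition $\mathfrak m$ according to the size of $q$ and $|\lambda|$ in the Dirichlet approximation: for dyadic $P < Q_0 \le Q$ and dyadic $0 \le \Lambda_0 \le \frac{1}{Q_0 Q}$, consider the set of $\alpha$ with $q \asymp Q_0$ and $|\lambda| \asymp \Lambda_0$ (with the degenerate case $\lambda = 0$ handled separately, contributing only the rationals $a/q$ which have measure zero). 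On such a piece $T = 1 + X^2|\lambda| \asymp 1 + X^2\Lambda_0 =: T_0$, the Weyl bound gives
\begin{equation*}
    S(\alpha) \ll_{f,s} (A(X)L^C)^s \left(\frac 1X + \frac{1}{Q_0 T_0} + \frac{Q_0 T_0}{X^2}\right)^{r/2}.
\end{equation*}
The measure of this piece is $\ll \sum_{q \asymp Q_0} q \cdot \Lambda_0 \ll Q_0^2 \Lambda_0 \ll Q_0 T_0 X^{-2}$ when $T_0 \asymp X^2\Lambda_0$, and $\ll Q_0^2 \Lambda_0$ in general; in either regime one checks the measure is $\ll Q_0 T_0 / X^2$. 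Multiplying the Weyl bound by the measure, the contribution of this dyadic block to the integral is
\begin{equation*}
    \ll_{f,s} (A(X)L^C)^s \frac{Q_0 T_0}{X^2}\left(\frac 1X + \frac{1}{Q_0 T_0} + \frac{Q_0 T_0}{X^2}\right)^{r/2}.
\end{equation*}
Since $r \ge 5$, hence $r/2 \ge 5/2 > 2$, one of the three bracket terms dominates, and in each case the extra factor $Q_0 T_0 / X^2$ combined with $(\cdot)^{r/2}$ beats $X^{-2}$ with room to spare: when $\frac{1}{Q_0 T_0}$ dominates we get $\frac{Q_0 T_0}{X^2}(Q_0 T_0)^{-r/2} \le \frac{1}{X^2}(Q_0 T_0)^{1-r/2} \le \frac{P^{1 - r/2}}{X^2} = \frac{L^{-B(r/2-1)}}{X^2}$ using $Q_0 T_0 \ge Q_0 > P$; when $\frac 1X$ or $\frac{Q_0 T_0}{X^2}$ dominates, similar bookkeeping (using $Q_0 T_0 \le QT \ll X^2 P^{-1}$) again produces a power of $X^{-2}$ times a negative power of $P = L^B$.

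Finally I would sum over the $O(L^2)$ dyadic blocks (there are $O(\log X)$ choices for $Q_0$ and $O(\log X)$ for $\Lambda_0$), which only costs a factor $L^2$. Altogether
\begin{equation*}
    \int_{\mathfrak m} |S(\alpha)|\, d\alpha \ll_{f,s} A(X)^s X^{-2} L^{Cs + 2 - B(r/2 - 1)},
\end{equation*}
and choosing $B$ large enough in terms of $K$, $c$, $s$, $r$ (recall $C$ depends on $c$, and $r \ge 5$ so $r/2 - 1 \ge 3/2 > 0$) makes the exponent $\le -K$. The main obstacle, as indicated, is not any single estimate but the bookkeeping: one must set up the dyadic decomposition so that the measure of each arc is genuinely $\asymp Q_0 T_0 X^{-2}$ (or bounded by it) and verify that for each of the three competing terms in the Weyl bracket the product "measure $\times$ (Weyl bound)" carries a clean power $X^{-2} P^{-\delta}$ with $\delta > 0$; the hypothesis $r \ge 5$ from Condition \ref{CL1} is exactly what guarantees $\delta > 0$ in every case. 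The argument is essentially the minor-arc estimate of \cite{Liu2011integralpoints}, now carrying the weight factor $A(X)^s$ in place of $X^s$.
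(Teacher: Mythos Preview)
There is a genuine gap in the bookkeeping. You yourself observe at the start that with Dirichlet approximation to level $Q = X^2P^{-1}$ one has $q > P$ and $T = 1 + X^2|\lambda| \le 2$ on the minor arcs; hence $T_0 \asymp 1$ on \emph{every} block, and your two-parameter decomposition collapses to a single dyadic parameter $Q_0$. Your asserted bound ``measure $\ll Q_0 T_0/X^2$'' then reads $Q_0^2\Lambda_0 \ll Q_0/X^2$, i.e.\ $Q_0\Lambda_0 \ll X^{-2}$; but the only constraint is $\Lambda_0 \le 1/(Q_0 Q)$, which yields only $Q_0\Lambda_0 \le P/X^2$. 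The correct block measure (after summing $\Lambda_0$) is $Q_0 P/X^2$, a factor $P$ larger than claimed. More fatally, even with your claimed measure the case ``$Q_0 T_0/X^2$ dominates'' does not close: since $T_0\asymp 1$ this is exactly the range $Q_0 > X$, which really occurs because $Q_0$ runs up to $Q = X^2/P \gg X$; on such a block your product is $(Q_0/X^2)^{1+r/2}$, and at $Q_0 \asymp X^2/P$ this is $P^{-(1+r/2)}$ with \emph{no} factor $X^{-2}$. The difficulty is that Dirichlet to level $Q$ permits denominators as large as $X^2/P$, for which the third Weyl term $q/X^2$ is only $\asymp 1/P$, too weak to compensate the large arc measure.

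The paper avoids this by parametrising the minor arcs not via a single fixed Dirichlet approximation but via the nested Farey-type sets
\[
E(R) = \bigcup_{q\le R}\bigcup_{a}\Bigl\{\alpha : \Bigl|\alpha - \tfrac aq\Bigr| \le \tfrac{R}{qX^2}\Bigr\},
\]
with $R = P_j := 2^jP$ running dyadically from $P$ up to $\asymp X$ (one has $E(X) = [0,1]$ by Dirichlet to level $X$). For $\alpha \in E(P_j)\setminus E(P_{j-1})$ there is an approximation with $q \le P_j$ and $P_{j-1} < q(1+X^2|\lambda|) \le 2P_j$; since $P_j \ll X$, the Weyl bracket is genuinely $\ll 1/P_j$, while $\mathrm{Meas}(E(P_j)) \ll P_j^2/X^2$, giving a block contribution $A(X)^s L^{Cs} X^{-2} P_j^{2-r/2}$ and, after summing $j$, the desired $A(X)^s X^{-2} L^{Cs} P^{2-r/2}$. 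The key point you are missing is that one must allow \emph{different} approximating fractions at different dyadic scales, keeping the denominators bounded by the scale $P_j \ll X$ rather than by $Q$.
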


\begin{proof}
    Define $$E(K) := \bigcup_{1 \leq q \leq K} \bigcup_{\substack{a\mods q\\ (a, q)=1}}\left\{ \alpha \in [0, 1] \mid \left|\alpha - \frac aq\right| \leq \frac K {qX^2}\right\}.$$ 
    Then $E(P) = \mathscr M$ and $E(X) = [0, 1]$. Define $P_j = 2^jP$ for all non-negative integers $j$, then for some $k \ll L$ we find $X \leq P_k \leq 2X$. In general we have $E(P_{j-1}) \subset E(P_j)$. Hence, we may express the minor arcs as 
    $$ \mathfrak m = \bigcup_{j=1}^h E(P_j)\backslash E(P_{j-1}).$$
    Let $\alpha \in E(P_j)\backslash E(P_{j-1})$, then we may write $\alpha = \frac aq + \lambda$ such that
    $$\textup{either }~~P_{j-1} < q \leq P_{j}~~~\textup{ or }~~P_{j-1} < qX^2|\lambda| \leq P_j,$$
    where the right hand sides of the inequalities always hold. Hence,
    $$P_{j-1} < q(1 + X^2|\lambda|) \leq 2P_j.$$
    Combining this with Lemma \ref{Lweyl} we obtain
    $$\sup_{\alpha \in E(P_j)\backslash E(P_{j-1})} |S(\alpha)| \ll A(X)^sL^{Cs}\left( \frac 1X + \frac 1 {P_{j-1}} + \frac {P_j} {X^2} \right)^{\frac r2} \ll A(X)^sL^{Cs}P_j^{-\frac r2}.$$
    Hence 
    $$\int_{E(P_j)\backslash E(P_{j-1})} |S(\alpha)| \ll A(X)^sL^{Cs}P_j^{-\frac r2}\textup{Meas}(E(P_j)) \ll A(X)^sX^{-2}L^{Cs}P_j^{2-\frac r2}.$$
    Considering the total integral over the minor arcs, we find that
    $$\int_{\mathfrak m} |S(\alpha)| d\alpha \ll \sum_{j=1}^k A(X)^sX^{-2}L^{Cs}P_j^{2-\frac r2} \ll A(X)^sX^{-2}L^{Cs}P^{2-\frac r2}(1+ (2^{2-\frac r2})^L).$$ Choosing $P = L^B$ with $B$ sufficiently large completes the proof, because Condition \ref{CL1} implies that $2 - \frac r2 < 0$ holds. 
\end{proof}
\section{Treatment of the major arcs}\label{S Major arcs}
In this section we will turn our attention to the major arcs. The goal is to rewrite the integral as a product of a singular series and a singular integral. To simplify calculations, from now on we switch to the enlarged major arcs $\mathscr N$ as defined in Section \ref{S setup and notation}. Note that \begin{equation} \label{EQadjustmajorarcs}
    \int_\mathscr{M} S(\alpha)e(-\alpha t) d\alpha = \int_{\mathscr N} S(\alpha)e(-\alpha t) d\alpha + O(A(X)^sX^{-2}L^{-K}),
\end{equation}
because $\mathscr N \backslash \mathscr M$ is a subset of the minor arcs which by Lemma \ref{Lminorarcs} are bounded by the given error term. Hence, evaluating the integral over the bigger major arcs does not change our asymptotic. For integers $a$ and $q$ and a real number $\lambda$ we define
\begin{align*}
    S(q, a) &:= \sum_{\bm h \mods q} \kappa(q, \bm h) e\left(\frac aq f(\bm h)\right) \textup{~~~~ and}\\
I(\lambda,X) &:= \int_{[0,X]^s} \psi(\bm u)e(\lambda f(\bm u)) \text{d}\bm u.
\end{align*}
\begin{thm} \label{Tmajor}
Let $\alpha \in \mathscr N(a, q)$ such that $\alpha = \frac aq + \lambda$ and assume Condition \emph{\ref{CD}} holds with $Q_D \geq q$. Assume that $A(X)$ is smoothly approximable and let $m > 0$ be arbitrary. Then
$$S(\alpha) = S(q, a)I(\lambda,X) + O(q^s\lambda X^2 A(X)^{s}L^{-m}),$$
where the implicit constant may depend on $m$.
\end{thm}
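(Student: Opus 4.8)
The plan is to start from the definition $S(\alpha)=\sum_{0\le\bm x\le X}a_{\bm x}e(\alpha f(\bm x))$, substitute $\alpha=\tfrac aq+\lambda$, and sort the variables into residue classes modulo $q$. Writing $\bm x=\bm h+q\bm y$ with $\bm h$ ranging over $(\ZZ/q\ZZ)^s$, the phase splits as $e(\tfrac aq f(\bm h))\,e(\lambda f(\bm h+q\bm y))$ because $f$ is a quadratic form with integer matrix, so $f(\bm h+q\bm y)\equiv f(\bm h)\pmod q$. This gives
\begin{equation*}
S(\alpha)=\sum_{\bm h\mods q}e\!\left(\tfrac aq f(\bm h)\right)\sum_{\substack{0\le\bm h+q\bm y\le X}}a_{\bm h+q\bm y}\,e(\lambda f(\bm h+q\bm y)).
\end{equation*}
The first step is therefore to replace the weighted inner sum by a smooth integral: I would approximate $\sum_{\bm x\equiv\bm h\,(q)}a_{\bm x}\,e(\lambda f(\bm x))$ by $\kappa(q,\bm h)\int_{[0,X]^s}\psi(\bm u)e(\lambda f(\bm u))\,d\bm u$, i.e. by $\kappa(q,\bm h)I(\lambda,X)$. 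Summing over $\bm h$ then produces exactly $S(q,a)I(\lambda,X)$, so the whole content is controlling the error in this single-class approximation and then summing the $q^s$ errors.

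The second step is the one-dimensional (really coordinate-by-coordinate) comparison. Using Condition \ref{CD} in the form $A(q,h;X)=\kappa(q,h)A(X)+O(A(X)L^{-m})$ together with the smooth approximability of $A$ (Definition \ref{Dsmthaprox}), one shows by partial summation that $\sum_{\substack{0\le x\le X,\ x\equiv h\,(q)}}a_x\,\phi(x)=\kappa(q,h)\int_0^X\psi(u)\phi(u)\,du+O\!\big(A(X)L^{-m}\,(\|\phi\|_\infty+\int_0^X|\phi'|)\big)$ for smooth test functions $\phi$; the derivative bound in Definition \ref{Dsmthaprox}(3) and monotonicity of $A$ are what make the $\psi$-integral itself comparable to $A(X)/X$, and Lemma \ref{Lsmthaprox} controls $\Psi$ at the endpoints. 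Applying this with $\phi(x)=e(\lambda\,\ell(x))$ for the relevant affine-in-$x_i$ pieces of $f$, for which $\|\phi\|_\infty=1$ and $\int_0^X|\phi'|\ll |\lambda|\cdot X\cdot X=|\lambda|X^2$ (the total variation of the phase being $O(|\lambda|X^2)$ since $f$ has degree $2$), gives a per-coordinate error of size $O(A(X)L^{-m}(1+|\lambda|X^2))$. Since on $\mathscr N(a,q)$ we have $|\lambda|\le 1/Q=P/X^2$, the factor $1+|\lambda|X^2$ is $O(\lambda X^2)$ up to the benign additive constant, and I would absorb the constant into a slightly larger $m$; telescoping the $s$ coordinate-wise replacements (replacing one variable at a time, bounding the as-yet-unreplaced weighted sums trivially by $O(A(X))$ in each coordinate) yields an error $O(A(X)^s\lambda X^2 L^{-m})$ for a single residue class $\bm h$.

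The third step is just bookkeeping: there are at most $q^s$ residue classes $\bm h\mods q$, and $\kappa(q,\bm h)\le 1$, so summing the single-class errors gives the claimed $O(q^s\lambda X^2 A(X)^s L^{-m})$, while the main terms assemble into $S(q,a)I(\lambda,X)$; the dependence of the implied constant on $\max_{i,j}|F_{i,j}|$ is harmless since $f$ is fixed. I expect the main obstacle to be the second step — rigorously pushing the one-variable Euler–Maclaurin/partial-summation comparison through for an oscillatory test function while keeping the error genuinely of shape $A(X)L^{-m}$ times the total variation, rather than something weaker. The delicate points are (i) that Condition \ref{CD}'s error term is $O(A(X)L^{-m})$ uniformly in $q\le Q_D$ and in the residue $h$, so that after summing over $q^s\le Q_D^s$ classes and using $Q_D$ only polylogarithmically large one still has a power of $L$ to spare; (ii) that the smoothing $\psi$ genuinely satisfies the variation bound so that $\int\psi(u)\phi(u)\,du$ can itself be compared to $A(X)/X$ and does not secretly blow up; and (iii) handling the replacement of several variables at once without the errors compounding multiplicatively in a bad way, which is why I would do it one coordinate at a time. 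Once these are in place the statement follows.
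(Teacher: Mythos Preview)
Your proposal is correct and follows essentially the same route as the paper: sort into residue classes modulo $q$, apply partial summation one coordinate at a time using Condition~\ref{CD} together with the smooth approximation $\Psi$ of $A$, pick up a per-variable error of size $O\bigl((1+|\lambda|X^2)A(X)L^{-m}\bigr)$ from the total variation of the degree-two phase, telescope over the $s$ coordinates while bounding the remaining weighted sums trivially by $A(X)$, and finally sum over the $q^s$ classes. You are in fact slightly more careful than the paper in flagging the $1+|\lambda|X^2$ versus $|\lambda|X^2$ discrepancy; the paper silently drops the additive~$1$, which is harmless in the subsequent application but makes the pointwise statement mildly imprecise at $\lambda=0$.
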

\begin{proof}
    We may immediately write 
    \[S(\alpha) = \sum_{\bm h \mods q} e\left(\frac aq f(\bm h)\right) \sum_{\substack{ 0 \leq \bm x \leq X \\ \bm x \equiv \bm h \mods q}} a_{\bm x}e(\lambda f(\bm x)). \]
    We start by analyzing the second sum in one variable $x$. We replace $f(\bm x)$ by some $g(x) \in \ZZ[x]$ of degree $2$, not necessarily homogeneous. We then apply partial summation and find
    \begin{equation}\label{EQpartsum}
        \sum_{\substack{ 0 \leq x \leq X \\ x \equiv h \mods q}} a_{x}e(\lambda g(x)) = A(q, h; X)e(\lambda g(X)) - \int_0^X A(q, h; u) \text{d}e(\lambda g(u)).
    \end{equation}
    Because $A(X)$ can be smoothly approximated we may define $\Psi(X)$ as in Definition \ref{Dsmthaprox}, where the $m$ in Property (1) is taken equal to the one in our assumptions. By Condition \ref{CD} we then have that 
    $$A(q, h; u) = \kappa(q, h)A(u) + O(A(X)L^{-m}) = \kappa(q, h)\Psi(u) + O(A(X)L^{-m}).$$
    Substituting this into the integral of (\ref{EQpartsum}) and applying integration by parts we find 
    \begin{align*} \int_0^XA(q, &h; u) \text{d}e(\lambda g(u)) \\ &= \int_0^X \kappa(q, h)\Psi(u)\text{d}e(\lambda g(u)) + O(\lambda X^2A(X)L^{-m}) \\
    &= \kappa(q, h)\Psi(X)e(\lambda g(X)) - \int_0^X \kappa(q, h) e(\lambda g(u))\psi(u) \text{d}u + O(\lambda X^2A(X)L^{-m}).
    \end{align*}
    Substituting this back into (\ref{EQpartsum}) we get
    \[\sum_{\substack{ 0 \leq x \leq X \\ x \equiv h \mods q}} a_{x}e(\lambda g(x)) = \kappa(q, h)\int_0^X e(\lambda g(u))\psi(u) \text{d}u + O(\lambda X^2 A(X)L^{-m}).\]
    For the sum over all the variables $x_i$ we apply this equality $s$ times.
    \begin{align*}
        &\sum_{\substack{ 0\leq \bm x \leq X \\ \bm x \equiv \bm h \mods q}} a_{\bm x}e(\lambda f(\bm x)) \\ 
        & =\sum_{\substack{ 0 \leq x_1 \leq X \\ x_1 \equiv h_1}} a_{x_1} \cdots \sum_{\substack{ 0\leq x_s \leq X \\ x_s \equiv h_s}} a_{x_s} e(\lambda f(\bm x)) \\
        &  =\sum_{\substack{ 0 \leq x_1 \leq X \\ x_1 \equiv h_1}} a_{x_1} \cdots \sum_{\substack{ 0 \leq x_{s-1} \leq X \\ x_{s-1} \equiv h_{s-1}}} a_{x_{s-1}} \left(\kappa(q, h_s) \int_0^X e(\lambda f(\bm x))\psi(x_s) dx_s + O(\lambda X^2A(X)L^{-m})\right)\\
        & =\kappa(q, h_s) \int_0^X \psi(u) \sum_{\substack{ 0 \leq x_j \leq X \\ x_j \equiv h_j \\ 1 \leq j \leq s-1}} a_{x_1} \cdots a_{x_{s-1}} e(\lambda f((x_1,\ldots,x_{s-1},u))du + O(\lambda X^2A(X)^{s}L^{-m}) \\
        & =\kappa(q, \bm h) \int_{[0, X]^s} \psi(\bm u) e(\lambda f(\bm u)) d\bm u + O(\lambda X^2 A(X)^{s}L^{-m}).
    \end{align*}
    We conclude that indeed \[S(\alpha) = S(q, a)I(\lambda,X) + O\left(q^s\lambda X^2 A(X)^{s}L^{-m}\right). \qedhere\]
\end{proof}
Define the truncated singular series 
$$\mathfrak S(P) := \sum_{q\leq P} B(q) ~~~\textup{ with } ~~B(q) :=\sumst_{a \mods q} S(q, a)e\left(-\frac aq t\right)$$ 
and the truncated singular integral 
\begin{equation}\label{EQ truncated sing series}
   \mathfrak I(X) := \int_{-\frac 1Q}^{\frac 1Q}I(\lambda, X)e(-\lambda t) d\lambda. 
\end{equation}
Then, because $P \leq Q_D$, we find that
\begin{equation*}
    \int_{\mathscr N} S(\alpha)e(-\alpha t) d\alpha = \mathfrak S(P)\mathfrak I(X) + O(\textup{Meas}(\mathscr N) P^{s+1}A(X)^{s}L^{-m}).
\end{equation*}
We calculate that $\textup{Meas}(\mathscr{N}) \ll \frac{P^2}{Q} = \frac{L^{3B}}{X^2}$ and thus, using Condition \ref{CD} for $m$ sufficiently large, we may rewrite the error term to get
\begin{equation}\label{EQmajorint}
    \int_{\mathscr N} S(\alpha)e(-t\alpha) d\alpha = \mathfrak S(P)\mathfrak I(X) + O(X^{-2}A(X)^{s}L^{-K}),
\end{equation}
where $K$ can be taken as the same constant as before.
The next lemma is needed to replace $\mathfrak S(P)$ by its completed version.
\begin{lem}\label{Lbqbound}
Assuming Condition \emph{\ref{CK}} we have $$B(q) \ll q^{1-\frac r2+\varepsilon}.$$
\end{lem}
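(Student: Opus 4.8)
The plan is to estimate $B(q)$ by exploiting multiplicativity and then bounding the local factor at each prime power. First I would observe that, writing $q = q_1 q_2$ with $(q_1, q_2) = 1$, Condition \ref{CC} together with the standard Gauss-sum-type splitting of $e(\frac{a}{q}f(\bm h))$ across coprime moduli gives a factorisation $S(q, a) = S(q_1, a_1)S(q_2, a_2)$ for suitable $a_1, a_2$, and hence $B(q) = B(q_1)B(q_2)$; thus it suffices to bound $B(p^k)$ for prime powers $p^k \le P$. (Strictly, one needs $p^k < Q_D$ for Condition \ref{CC} to apply, which holds since $q \le P \le Q_D$.)

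The heart of the matter is the bound for a single prime power. Here I would return to the definition $S(p^k, a) = \sum_{\bm h \bmod p^k} \kappa(p^k, \bm h) e\!\left(\frac{a}{p^k} f(\bm h)\right)$ and use Condition \ref{CK} to replace each $\kappa(p^k, h_i)$ by $O\!\big((\log p^k)^c p^{-k}\big)$, so that
\[
|S(p^k, a)| \ll \frac{(\log p^k)^{cs}}{p^{ks}} \Big| \sum_{\bm h \bmod p^k} e\!\left(\tfrac{a}{p^k} f(\bm h)\right) \Big|'
\]
up to replacing the unweighted exponential sum by one where each summand is weighted by a nonnegative quantity of size $O(p^{-ks})$ — i.e.\ after the bound the relevant object is the complete exponential sum $\sum_{\bm h \bmod p^k} e(\frac{a}{p^k} f(\bm h))$, for which classical estimates for quadratic forms apply. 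The key input is that such a complete Gauss sum for a quadratic form whose relevant submatrix has rank $r$ is $\ll p^{k(s - r/2)}$ (this is where $r = \rank(F_2)$ enters, mirroring the rank bookkeeping in Lemma \ref{Lweyl} and the cited work of Liu--Zhao). Combining, $|S(p^k, a)| \ll (\log p^k)^{cs} p^{-kr/2}$, and summing $e(-\frac{a}{p^k}t)$ over the $\varphi(p^k) \le p^k$ admissible $a$ gives $B(p^k) \ll (\log p^k)^{cs} p^{k(1 - r/2)}$.

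Finally I would reassemble: by multiplicativity $B(q) = \prod_{p^k \| q} B(p^k) \ll \prod_{p^k \| q} (\log p^k)^{cs} p^{k(1-r/2)} = q^{1 - r/2} \prod_{p^k \| q} (\log p^k)^{cs}$, and the standard divisor-type bound $\prod_{p^k\|q}(\log p^k)^{cs} \ll q^{\varepsilon}$ yields $B(q) \ll q^{1 - r/2 + \varepsilon}$. The main obstacle I anticipate is making the complete-exponential-sum bound $\sum_{\bm h \bmod p^k} e(\frac{a}{p^k}f(\bm h)) \ll p^{k(s-r/2)}$ fully rigorous and uniform in $a$ (with $(a,p)=1$): one must diagonalise or otherwise normalise the quadratic form $f$ over $\ZZ/p^k\ZZ$, handle the prime $p = 2$ and ramified primes carefully, and track that only the rank-$r$ part of the form contributes a genuine saving while the remaining variables each contribute a full factor $p^k$. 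All other steps are routine bookkeeping.
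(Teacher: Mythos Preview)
Your proposal has a genuine gap at the central step, and it also assumes more than the lemma allows.

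First, the lemma only assumes Condition~\ref{CK}; it does \emph{not} assume Condition~\ref{CC}. Your reduction to prime powers via multiplicativity therefore uses an unlisted hypothesis. (The paper does state and use Condition~\ref{CC} elsewhere, but the bound on $B(q)$ is deliberately proved without it.)

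Second, and more seriously, the step where you pass from the $\kappa$-weighted sum to the complete Gauss sum does not work. Bounding each $\kappa(p^k,h_i)$ by $O\big((\log p^k)^c p^{-k}\big)$ gives only
\[
|S(p^k,a)| \le \sum_{\bm h \bmod p^k} \kappa(p^k,\bm h) \ll (\log p^k)^{cs},
\]
the trivial bound; it does \emph{not} give $(\log p^k)^{cs} p^{-ks}\,\big|\sum_{\bm h} e(\tfrac{a}{p^k}f(\bm h))\big|$. One cannot majorise $|\sum_h w(h) e(\theta(h))|$ by $(\max_h w(h))\,|\sum_h e(\theta(h))|$ for nonnegative weights $w$: the oscillation in the unweighted sum may cancel in places where the weights are small and fail to cancel where they are large. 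Your parenthetical ``up to replacing the unweighted exponential sum by one where each summand is weighted by a nonnegative quantity'' is precisely the point at which the argument breaks, and no routine bookkeeping repairs it.

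The paper's proof avoids both issues by a single observation: Condition~\ref{CK} says the sequence $\mathscr{K}_q := (\kappa(q,h))_{h\in\NN}$ satisfies Condition~\ref{CL2} (with $X=q$ and the same constant $c$), so Lemma~\ref{Lweyl} applies \emph{directly} to the weighted sum $T(a/q)=\sum_{\bm h \bmod q}\kappa(q,\bm h)\,e\!\big(\tfrac{a}{q}f(\bm h)\big)$, yielding $T(a/q)\ll (\log q)^{Cs} q^{-r/2}$. Summing over the $\le q$ values of $a$ gives $B(q)\ll (\log q)^{Cs} q^{1-r/2}\ll q^{1-r/2+\varepsilon}$ with no multiplicativity and no separate Gauss-sum estimate. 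The point is that Lemma~\ref{Lweyl} was proved for \emph{arbitrary} nonnegative weights satisfying~\ref{CL2} via Cauchy--Schwarz and Weyl differencing, which is exactly the tool needed to handle a weighted exponential sum without factoring out the weights.
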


\begin{proof}
For each $q$ consider the sequence $\mathscr K_q := (\kappa(q, h))_{h \in \NN}$. We will use Condition \ref{CK} to show that $\mathscr K_q$ meets Condition \ref{CL2} for each $q$ and then apply Lemma \ref{Lweyl}. 

Let $X \geq q$, then

    \begin{align*}
         \sum_{x \leq X} \kappa(q, x)^2 
        &\ll \sum_{x\leq X}\left(\frac{(\log q)^c}{q}\right)^2 \\
        &=O\left(\frac{X(\log q)^{2c}}{q^2}\right).
    \end{align*}
 Furthermore, the right hand side of Condition \ref{CL2} becomes 
 $$\left(\sum_{x \leq X}\kappa(q, x)\right)^2X^{-1}L^c \leq \frac{X}{q^2}L^c + \frac{L^c}{X}.$$
 Note that the constant $c$ needed in Condition \ref{CL2} can be taken to be the constant $c$ from Condition \ref{CK}.
 Therefore the condition is satisfied. We can now use Lemma \ref{Lweyl} with $X=q$, $\alpha = \frac aq$, $\lambda = 0$ and sequence $\mathscr K_q$ to find the following bound:
    $$T\left( \frac aq \right) := \sum_{\bm h \mods q} \kappa(q, \bm h) e\left(\frac aq f(\bm h)\right) \ll 1^s (\log q)^{Cs}q^{-\frac r2}.$$
    Furthermore, 
    \begin{align*}
        B(q) &= \sumst_{a \mods q}\sum_{\bm h \mods q} \kappa(q, \bm h) e\left( \frac aq f(\bm h) \right) e\left( - \frac{a}{q}t\right) \\
        &= \sumst_{a \mods q}e\left( - \frac{a}{q}t\right) T\left(\frac aq\right).
    \end{align*}
    We may conclude that $|B(q)| \ll (\log q)^{Cs}q^{1-\frac r2}$.\qedhere
\end{proof}
 Assuming Condition \ref{CL1}, Lemma \ref{Lbqbound} implies that $B(q) \ll q^{-\frac{3}2 + \varepsilon}$ and therefore the sum over $B(q)$ converges. Define the (completed) singular series as
\begin{equation} \label{EQ untruncated sing series}
    \mathfrak S := \sum_{q} B(q).\end{equation}
 We get that
 \begin{equation}\label{EQsingserie}
     \mathfrak S - \mathfrak S(P) \ll P^{-1/2}.
 \end{equation}
 From this we may conclude that replacing $\mathfrak S(P)$ by $\mathfrak S$ in (\ref{EQmajorint}) does not change the main term of the asymptotic.

\section{Interpretation of the singular series}\label{S singular series}
In this section we relate the singular series to the counting problem. Our goal is to show that we can interpret it as a product of local densities outside of infinity. For this we first show that the completed singular series can be written as an Euler product. We then recall the definition of weighted $p$-adic densities and show that each factor in the Euler product is actually a weighted $p$-adic density.

We define the exponential sum including the shift by $t$ by 
$$ S_t(q, a) := \sum_{\bm h \mods q} \kappa(q, \bm h) e\left(\frac aq (f(\bm h)-t)\right),$$
where $\bm h=(h_1,\ldots, h_s)$. Then we have that
$$B(q)=\sumst_{\substack{a\mods q}}S_t(q,a).$$
\begin{lem}
    Assume Condition \emph{\ref{CC}}. Then the function $B(q)$ is multiplicative. 
\end{lem}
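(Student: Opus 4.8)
The plan is to show that $B(q_1q_2) = B(q_1)B(q_2)$ whenever $(q_1,q_2)=1$, using the factorization of $S_t(q,a)$ supplied by Condition \ref{CC} together with the Chinese Remainder Theorem for the exponential-sum variables. First I would fix coprime $q_1,q_2$ with $q=q_1q_2$ and, as usual, parametrize the residue $a \bmod q$ coprime to $q$ by the pair $a \equiv q_2 a_1 + q_1 a_2 \pmod q$ with $a_1 \bmod q_1$, $a_2 \bmod q_2$, both coprime to their respective moduli; this is a bijection $(\ZZ/q_1\ZZ)^\times \times (\ZZ/q_2\ZZ)^\times \to (\ZZ/q\ZZ)^\times$. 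Simultaneously I would write each coordinate $h_i \bmod q$ via CRT as $h_i \equiv q_2 h_i' + q_1 h_i'' \pmod q$ with $h_i' \bmod q_1$ and $h_i'' \bmod q_2$, so that $\bm h \leftrightarrow (\bm h', \bm h'')$ runs over $(\ZZ/q_1\ZZ)^s \times (\ZZ/q_2\ZZ)^s$.

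The key algebraic input is that, with this CRT decomposition, $\kappa(q,\bm h)$ factors as $\kappa(q_1, q_2\bm h')\,\kappa(q_2, q_1 \bm h'')$: indeed $\kappa(q,h_i)=\kappa(q_1q_2, q_1 h_i'' + q_2 h_i') = \kappa(q_1, q_2 h_i')\kappa(q_2, q_1 h_i'')$ by Condition \ref{CC} (matching $h = q_1 h_i'' + q_2 h_i'$, $h' = h_i''$, $h''=h_i'$ in the notation of that condition), and then taking the product over $i=1,\dots,s$ gives the claim. For the exponential factor one uses that $f$ is a quadratic form, so $f(\bm h) \equiv q_2^2 f(\bm h') + q_1^2 f(\bm h'') \pmod q$ (the cross terms carry a factor $q_1 q_2 = q$ and vanish), whence
\begin{align*}
e\!\left(\frac{a}{q}(f(\bm h)-t)\right)
 &= e\!\left(\frac{q_2 a_1 + q_1 a_2}{q_1 q_2}\bigl(q_2^2 f(\bm h') + q_1^2 f(\bm h'') - t\bigr)\right)\\
 &= e\!\left(\frac{a_1}{q_1}\bigl(q_2 f(\bm h') - \overline{q_2}\,t\bigr)\right)
    e\!\left(\frac{a_2}{q_2}\bigl(q_1 f(\bm h'') - \overline{q_1}\,t\bigr)\right),
\end{align*}
after splitting $\tfrac{t}{q_1q_2}$ into partial fractions $\tfrac{\overline{q_2}}{q_1}+\tfrac{\overline{q_1}}{q_2} \pmod 1$. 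A change of variables $\bm h' \mapsto \overline{q_2}\,\bm h'$ (a bijection of $(\ZZ/q_1\ZZ)^s$, legitimate since $\kappa(q_1,\cdot)$ and the scaling of $f$ transform compatibly) and likewise for $\bm h''$ turns each of the two inner sums into exactly $S_t(q_1,a_1)$ and $S_t(q_2,a_2)$.

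Putting the pieces together, the double sum over $\bm h$ separates and the sum over $a \bmod q$ coprime to $q$ becomes a product of sums over $a_1$ and $a_2$, giving $B(q_1q_2) = \sum_{a_1}^{*} S_t(q_1,a_1) \cdot \sum_{a_2}^{*} S_t(q_2,a_2) = B(q_1)B(q_2)$. The main obstacle, and the step to be careful about, is the bookkeeping in the change of variables: one must check that replacing $\bm h'$ by $\overline{q_2}\bm h'$ (and absorbing the resulting scalar $\overline{q_2}^{\,2}$ or $q_2$ into the argument of $f$) is consistent with how $\kappa(q_1, q_2 \bm h')$ depends on its second argument — i.e.\ that $\kappa(q_1, q_2 \cdot \overline{q_2}\bm h') = \kappa(q_1, \bm h')$ as residues mod $q_1$ — and similarly that the partial-fraction split of $t/(q_1q_2)$ is the one that makes the two exponential pieces align with the definition of $S_t(q_i,a_i)$. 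Once the normalizations are pinned down this is routine, but it is where an error would most naturally creep in.
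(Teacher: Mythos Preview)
Your approach is essentially the paper's: reduce $B(q_1q_2)=B(q_1)B(q_2)$ to the factorization $S_t(q_1q_2,\,q_2a_1+q_1a_2)=S_t(q_1,a_1)S_t(q_2,a_2)$ via the CRT splitting $\bm h=q_2\bm h'+q_1\bm h''$, Condition~\ref{CC} for $\kappa$, and the vanishing of cross terms in $f$ modulo $q_1q_2$. The paper phrases the exponential check as ``consider each monomial of $f$ separately'' and absorbs the change of variables on the $S_t(q_i,a_i)$ side rather than the $S_t(q_1q_2,\cdot)$ side, but the content is identical.

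One concrete slip in your displayed identity, precisely at the spot you flagged: with $a=q_2a_1+q_1a_2$ and $f(\bm h)\equiv q_2^2 f(\bm h')+q_1^2 f(\bm h'')\pmod{q_1q_2}$ one gets
\[
e\!\left(\tfrac{a}{q}(f(\bm h)-t)\right)=e\!\left(\tfrac{a_1}{q_1}\bigl(q_2^{\,2} f(\bm h')-t\bigr)\right)\,e\!\left(\tfrac{a_2}{q_2}\bigl(q_1^{\,2} f(\bm h'')-t\bigr)\right),
\]
i.e.\ $q_2^2$ rather than $q_2$, and plain $t$ rather than $\overline{q_2}\,t$ (no partial-fraction split is needed, since $\tfrac{at}{q_1q_2}=\tfrac{a_1t}{q_1}+\tfrac{a_2t}{q_2}$ exactly). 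Your subsequent substitution $\bm h'\mapsto \overline{q_2}\,\bm h'$ then turns $q_2^2 f(\bm h')$ into $f(\bm h')$ and $\kappa(q_1,q_2\bm h')$ into $\kappa(q_1,\bm h')$, yielding $S_t(q_1,a_1)$ as intended.
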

\begin{proof}
  To show the multiplicativity of $B(q)$ it suffices to show that for any coprime pair of natural numbers $(q,q')$ and any integers $a,a'$ we have 
    \begin{equation}\label{EQmult}
        S_t(q,a)S_t(q',a')=S_t(qq',qa'+q'a).
    \end{equation}
    We start by working on the right hand side
    $$S_t(qq',qa'+q'a)=\sum_{\bm h \mods {qq'}}\kappa(qq',\bm h)e\left(\frac {qa'+q'a}{qq'} (f(\bm h)-t)\right).$$
    Writing $\bm h=q'\bm h_1+q\bm h_2$ we can replace the sum over $\bm h\mods {qq'}$ by two sums over $\bm h_1 \mods q$ and $\bm h_2 \mods {q'}$ and obtain 
    $$\sum_{\bm h_1\mods q}\sum_{\bm h_2\mods {q'}}\kappa(qq',q'\bm h_1+q\bm h_2)e\left(\frac {qa'+q'a}{qq'}(f(q'\bm h_1+q\bm h_2)-t)\right).$$
The multiplicativity of $\kappa$ is given by Condition \ref{CC}. It hence matches the contribution of $\kappa$ on the left hand side. It is left to show that the two exponential sums are equal. 
Considering the contribution from the left hand side of (\ref{EQmult}), note that we can replace the summation over $\bm h_1 \mods q$ by $q'\bm h_1 \mods q$ and the summation over $\bm h_2 \mods {q'}$ by $q\bm h_2 \mods {q'}$ since $(q,q')=1$. 
Therefore it suffices to show that

$$e\left(\frac{q' a(f(q'\bm h_1)-t) + q a'(f(q\bm h_2) - t)}{qq'} \right) = e\left(\frac{(qa'+q'a)(f(q'\bm{h}_1 + q\bm h_2)-t)}{qq'} \right).$$
This can be done by considering each monomial of $f$ separately.
\end{proof}

We formally set
\begin{equation}\label{EQ defi chitilde}
\tilde{\chi}_p(\mathcal{A})=\sum_{k=0}^\infty B(p^k)\end{equation}
and, using the multiplicativity of $B(q)$ rewrite the completed singular series defined in (\ref{EQ untruncated sing series}) as an Euler product
\begin{equation} \label{EQprodlocdens}
\mathfrak{S}=\prod_p \tilde{\chi}_p(\mathcal{A}).
\end{equation}
Our goal is to show that this definition of $\tilde{\chi}_p(\mathcal A)$ matches the weighted $p$-adic density of solutions to $f(\bm x)=t$. To help with the intuition of weighted $p$-adic densities, we first recall the classical definition. 
\begin{defi}[classical $p$-adic density]
Let $Q$ be a subset of $\ZZ^s$. Consider the solutions to $r$ polynomial equations in $s$ variables:
$$V:=\{\bm{x}\in Q \mid \ f_1(\bm x)=t_1,\ldots,f_r(\bm x)=t_r\}.$$
Let 
$$V(\ZZ/p^m\ZZ):=\{\bm{x}\in Q/p^m\ZZ \mid \ f_1(\bm x)\equiv t_1,\ldots,f_r(\bm x)\equiv t_r\mods{p^m}\}.$$
Denote the number of solutions modulo $p^m$ by $v_m(p)$ and the number of inputs modulo $p^m$ by $q_m(p)$, 
\begin{align*}
    v_m(p)&:=|V(\ZZ/p^m\ZZ)| \\
    q_m(p)&:=|Q\mods{p^m}|.
\end{align*}
Note that there are $p^{mr}$ possible choices for $(t_1,\ldots,t_r)\mods{p^m}$. 
We then define the density of $V$ with respect to $Q$ modulo $p^m$ by $$\mu(p,m)=\frac{p^{mr}}{q_m(p)}v_m(p).$$
The $p$-adic density is defined as $$\chi_p(V):=\lim_{m\rightarrow \infty}\mu(p,m).$$
\end{defi}

In analogy to this we can now incorporate the weights. 

\begin{defi}[weighted $p$-adic density]
Let $Q$ be a subset of $\ZZ^s$. Consider the solutions to $r$ polynomial equations in $s$ variables:
$$V:=\{\bm{x}\in Q \mid \ f_1(\bm x)=t_1,\ldots,f_r(\bm x)=t_r\}.$$
Let there be weights for $x\mods{p^m}$ given by $\kappa(p^m,x)$.
Denote the weighted number of solutions modulo $p^m$ by $v_{\kappa,m}(p)$ and the number of weighted inputs modulo $p^m$ by $q_{\kappa,m}(p)$,
\begin{align*}
    v_{\kappa,m}(p)&:=\sum_{\bm x\in V(\ZZ/p^m\ZZ)}\kappa(p^m,\bm x)\\
    q_{\kappa,m}(p)&:=\sum_{\bm x\in Q\mods{p^m}}\kappa(p^m,\bm x).
\end{align*}
Note that there are still $p^{mr}$ possible choices for $(t_1,\ldots,t_r)\mods{p^m}$. 
We define the weighted density of $V$ with respect to $Q$ modulo $p^m$ by $$\mu_\kappa(p,m)=\frac{p^{mr}}{q_{\kappa,m}(p)}v_{\kappa,m}(p).$$
We define the weighted $p$-adic density as $$\chi_{\kappa,p}(V):=\lim_{m\rightarrow \infty}\mu_\kappa(p,m).$$\end{defi}

With this definition in mind we compute $\chi_{\kappa,p}(V)$ in our case. Observe that for us $r=1$ and hence the number of possible outcomes is $p^m$. Note furthermore that $Q=\ZZ^s$ and hence the size of $Q\mods {p^m}$ is given by 
\begin{align*}
 \sum_{\bm x \mods {p^m}}\kappa(p^m,\bm x)=1,
\end{align*}
since for all $1\leq i\leq s$ we have $\sum_{x_i\mods {p^m}}\kappa(p^m,x_i)=1$.
It is left to determine the size of $V(\ZZ/p^m\ZZ)$.

\begin{defi}    
Let $M(p^m)$ be the weighted count of solutions to our original counting problem modulo ${p^m}$, i.e. the $\bm x \in (\ZZ/p^m\ZZ)^s$ such that $f(\bm x)\equiv t \mods {p^m}$ weighted by $\kappa(p^m,\bm x)$. As a formula,
$$M(p^m)=\sum_{f(\bm x)\equiv t\mods {p^m} }\kappa(p^m,\bm x).$$
\end{defi}

Hence the weighted $p$-adic density is in our case given by \begin{equation}\label{EQ our weighted density}
\lim_{m\rightarrow \infty}p^mM(p^m).\end{equation} To conclude this section we show that (\ref{EQ our weighted density}) matches our definition of $\tilde{\chi_p}(\mathcal{A})$ in (\ref{EQ defi chitilde}).

\begin{lem} \label{LrelationChiandM}
The weighted $p$-adic density equals $\tilde{\chi}_p(\mathcal{A})$, i.e. 
    $$\tilde{\chi}_p(\mathcal{A})=\lim_{m\rightarrow\infty} p^mM(p^m).$$
\end{lem}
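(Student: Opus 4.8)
The plan is to unfold both sides as finite exponential sums modulo $p^m$ and identify them via the standard orthogonality relation, then take the limit $m \to \infty$.

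\medskip

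\noindent\textbf{Approach.} First I would establish the key identity connecting the partial sums $\sum_{k=0}^{m} B(p^k)$ to the weighted count $M(p^m)$. The natural tool is the detection of the congruence $f(\bm h) \equiv t \pmod{p^m}$ via additive characters: writing
$$\mathds 1_{[f(\bm h) \equiv t \bmod{p^m}]} = \frac{1}{p^m}\sum_{b \bmod{p^m}} e\left(\frac{b}{p^m}(f(\bm h)-t)\right),$$
one obtains $p^m M(p^m) = \sum_{\bm h \bmod{p^m}} \kappa(p^m,\bm h) \sum_{b \bmod{p^m}} e\left(\frac{b}{p^m}(f(\bm h)-t)\right)$. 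I would then sort the sum over $b \bmod{p^m}$ by the exact power of $p$ dividing $b$: writing $b = p^{m-k} a$ with $0 \le k \le m$ and $(a,p)=1$ (with the convention that $k=0$ contributes the single term $b=0$), so that $\frac{b}{p^m} = \frac{a}{p^k}$, gives
$$p^m M(p^m) = \sum_{k=0}^{m} \sumst_{a \bmod{p^k}} \sum_{\bm h \bmod{p^m}} \kappa(p^m,\bm h)\, e\left(\frac{a}{p^k}(f(\bm h)-t)\right).$$

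\medskip

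\noindent\textbf{Key step: collapsing $\bm h \bmod p^m$ to $\bm h \bmod p^k$.} For the inner sum with a fixed $k \le m$, the exponential $e\!\left(\tfrac{a}{p^k}(f(\bm h)-t)\right)$ depends only on $\bm h \bmod p^k$. Grouping the $\bm h \bmod p^m$ by their residue $\bm h_0 \bmod p^k$, I need that $\sum_{\bm h \equiv \bm h_0 (p^k)} \kappa(p^m,\bm h) = \kappa(p^k,\bm h_0)$. Componentwise this reduces to the one-variable claim $\sum_{x \equiv x_0 (p^k)} \kappa(p^m, x) = \kappa(p^k, x_0)$, which should follow from Condition \ref{CC} (multiplicativity is only for coprime moduli, so this is really a consistency statement for prime power moduli): indeed iterating Condition \ref{CC} or arguing directly from the distribution Condition \ref{CD} — since $\kappa(p^k, x_0) A(X)$ is the main term of $A(p^k, x_0; X)$ and $A(p^k, x_0; X) = \sum_{x_1 \equiv x_0 (p^k)} A(p^m, x_1; X)$ over the $p^{m-k}$ residues $x_1 \bmod p^m$ lifting $x_0$ — one gets $\kappa(p^k,x_0) = \sum_{x_1 \equiv x_0} \kappa(p^m, x_1)$. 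With this, the inner sum becomes exactly $\sum_{\bm h_0 \bmod p^k} \kappa(p^k,\bm h_0) e\!\left(\tfrac{a}{p^k}(f(\bm h_0)-t)\right) = S_t(p^k,a)$, so that $p^m M(p^m) = \sum_{k=0}^m \sumst_{a \bmod{p^k}} S_t(p^k,a) = \sum_{k=0}^m B(p^k)$.

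\medskip

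\noindent\textbf{Conclusion.} Taking $m \to \infty$ gives $\lim_{m\to\infty} p^m M(p^m) = \sum_{k=0}^\infty B(p^k) = \tilde{\chi}_p(\mathcal A)$, where convergence of the series is guaranteed by Lemma \ref{Lbqbound} together with Condition \ref{CL1} (giving $B(p^k) \ll p^{k(1-r/2+\varepsilon)}$ with $1 - r/2 < 0$). The main obstacle I anticipate is the consistency lemma $\sum_{x \equiv x_0 (p^k)} \kappa(p^m,x) = \kappa(p^k,x_0)$: Condition \ref{CC} as stated only gives multiplicativity across coprime moduli and does not directly address lifting within a single prime-power tower, so I would need to either extract this from Condition \ref{CD} (as sketched above, comparing the asymptotics of $A(p^k,x_0;X)$ and the sum of $A(p^m,x_1;X)$ over lifts) or note it as an additional elementary consequence; everything else is a routine character-sum manipulation.
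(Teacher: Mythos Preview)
Your proposal is correct and follows essentially the same route as the paper: detect the congruence via additive characters, sort the character sum by the $p$-adic valuation of the numerator to reduce to a sum of $S_t(p^k,a)$ over $k\le m$ and $(a,p)=1$, and collapse $\bm h \bmod p^m$ to $\bm h \bmod p^k$ using the consistency relation $\kappa(p^k,h_0)=\sum_{h\equiv h_0 (p^k)}\kappa(p^m,h)$. In fact you are slightly more careful than the paper, which simply asserts this last relation without comment; your derivation of it from Condition~\ref{CD} (comparing the main terms of $A(p^k,h_0;X)$ and $\sum_{h_1}A(p^m,h_1;X)$) is exactly the right justification.
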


\begin{proof}
From $$\sum_{a\mods {p^m}}e\left(\frac{a}{p^m}x\right)=\begin{cases}
    p^m &\text{if }x\equiv0 \mods{p^m} \\
    0 &\text{else}
\end{cases}$$ 
we obtain by definition of $M(p^m)$ and $S_t(p^m,a)$ that 
$$p^m M(p^m)=\sum_{a\mods {p^m}}S_t(p^m,a).$$
We now group the numbers modulo $p^m$ by their greatest common divisor with $p^m$. 
$$p^m M(p^m)=\sum_{k=0}^m\sumst_{a\mods{p^k}}S_t(p^m,p^{m-k}a).$$
Now using that 
\begin{equation*}\label{kappa additive}    
\kappa(q,h)=\sum_{\substack{b\mods{qq'}\\b\equiv h\mods q}}\kappa(qq',b)
\end{equation*}
we have 
\begin{align*}
    S_t(p^m,p^{m-k}a)&=\sum_{\bm h \mods{p^m}} \kappa(p^m, \bm h) e\left(\frac {ap^{m-k}}{p^m} (f(\bm h)-t)\right)  \\
&= \sum_{\bm h \mods{p^k}} \sum_{\substack{\bm b\mods{p^m} \\ \bm b\equiv \bm h \mods{p^k} }}\kappa(p^m, \bm b) e\left(\frac a{p^k} (f(\bm h)-t)\right)\\   
&= \sum_{\bm h \mods{p^k}} \kappa(p^k, \bm h) e\left(\frac a{p^k} (f(\bm h)-t)\right)  \\
     &=S_t(p^k,a).
\end{align*}
Hence we obtain 
\begin{equation*}\lim_{m\rightarrow\infty} p^m M(p^m)=\lim_{m\rightarrow\infty}\sum_{k=0}^m\sumst_{a\mods{p^k}}S_t(p^k,a)=\sum_{k=0}^\infty B(p^k)=\tilde{\chi}_p(\mathcal{A}). \qedhere \end{equation*}

\end{proof}

This allows us to conclude that our singular series is indeed the product of the local densities outside of infinity. 

\begin{rem}
    In general it is not clear that this density is positive. We might need extra conditions, like for example Condition \ref{CB} in Section \ref{SExample} or the no bad primes condition in \cite{Liu2011integralpoints}. 
\end{rem}

\section{Interpretation of the singular integral}\label{S singular integral}
Recall that
\begin{align*}
\mathfrak I(X) = \int_{-1/Q}^{1/Q}I(\lambda,X)e(-\lambda t)\text{d}\lambda = \int_{-1/Q}^{1/Q}\int_{[0,X]^s} \psi(\bm u)e(\lambda f(\bm u)) e(-\lambda t)\text{d}\bm u\text{d}\lambda.
\end{align*}
We do a change of variables  $\bm u \rightarrow X\bm u, \lambda =  X^{-2}\gamma$ and $t_0=X^{-2}t$ to reduce the dependency on $X$ of the singular integral. Using that $f(X\bm u)=X^2f(\bm u)$ and $X^2/Q=P = L^B$, we obtain
\begin{align*}
 \mathfrak I(X) = &\int_{-P}^{P}\int_{[0,X]^s} \psi(X\bm u)e(X^{-2}\gamma f(X\bm u)) e(-X^{-2}\gamma X^2t_0)X^sX^{-2}\text{d}\bm u\text{d}\gamma  \\
= &~ \frac{\Psi(X)^s}{X^2} \int_{-P}^{P}\frac{X^s}{\Psi(X)^s}\int_{[0,1]^s} \psi(X\bm u)e(\gamma f(\bm u)) e(-\gamma t_0)\text{d}\bm u\text{d}\gamma.
\end{align*}
We would like to show three things. First, that integrating $\gamma$ over $(-\infty, \infty)$ instead of $(-P, P)$ gives an admissible error. Second, that this new integral grows asymptotically like $\frac{\Psi(X)^s}{X^s}$. Finally, that the constant in front of this asymptotic has an interpretation as the local density of solutions over the reals. This will be done in the next three subsections respectively. However, this only works if for $\bm x \in [0, 1]^s$ the equation $f(\bm x) = t_0$ has a solution and not too much curvature. To slightly generalize our results we consider the integral $\mathfrak I (X)$ over some box $\mathfrak B = \prod_{i=1}^s[y_i, z_i] \subset \RR_{>0}^s$ instead of $[0, 1]^s$. Hence, we define
\begin{equation}\label{EQsingintbox}
    \mathfrak I_{\mathfrak B}(X) := X^{s-2} \int_{-P}^{P}\int_{\mathfrak B} \psi(X\bm u)e(\gamma f(\bm u)) e(-\gamma t_0)\text{d}\bm u\text{d}\gamma.
\end{equation}
Furthermore, let
    $$\mathcal I(\gamma, \mathfrak B, X) := \int_{\mathfrak B} \psi(X\bm u) e(\gamma f(\bm u))d\bm u~~~~ \text{ and } ~~~~~\mathcal J(\gamma, \mathfrak B) :=  \int_{\mathfrak B} e(\gamma f(\bm u)) d\bm u.$$
We get that 
$$\mathfrak I_{\mathfrak B}(X) = X^{s-2} \int_{-P}^P \mathcal I(\gamma, \mathfrak B, X)e(-\gamma t_0) d\gamma.$$
The main consequence of changing to the box $\mathfrak B$ is that the weighted solutions we are now counting are those inside of $X\mathfrak B$, rather than $[0, X]^s$. 

\subsection*{Completing the singular integral}
In this part we complete the integral by first bounding $\mathcal J$ and then comparing $\frac {X^s}{\Psi(X)^s} \mathcal I$ with $\mathcal J$.
\begin{lem}\label{LupbndJ}
  Let $\gamma \in \RR$, and assume that Condition \textup{\ref{CL1}} holds, then
  $$\mathcal J(\gamma, \mathfrak B) \ll \min(\textup{Meas}(\mathfrak B), |\gamma|^{-2 - \varepsilon}).$$
\end{lem}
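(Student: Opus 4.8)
The plan is to establish the two bounds $\mathcal J(\gamma,\mathfrak B)\ll \operatorname{Meas}(\mathfrak B)$ and $\mathcal J(\gamma,\mathfrak B)\ll|\gamma|^{-2-\varepsilon}$ separately, since the minimum of the two is what is claimed. The first bound is immediate: $|e(\gamma f(\bm u))|=1$, so $|\mathcal J(\gamma,\mathfrak B)|\le\int_{\mathfrak B}1\,d\bm u=\operatorname{Meas}(\mathfrak B)$. All the work is in the second bound, and here I would exploit Condition \ref{CL1}, which guarantees after permuting variables that $\operatorname{rank}(F_2)\ge 5$. Writing $\bm u=(\bm y,\bm z)$ with $\bm y$ the first $n=\lfloor s/2\rfloor$ coordinates, recall from the set-up in Section \ref{S weyl estimate} that
$$f(\bm u)=r(\bm y)+y_1 g_1(\bm z)+\cdots+y_n g_n(\bm z)+q(\bm z),$$
where the linear forms $g_1,\dots,g_n$ in $\bm z$ have coefficient matrix essentially $F_2$ (up to the symmetric normalisation), so that for generic $\bm z$ the gradient of $f$ in the $\bm y$-variables is a vector built from $(g_1(\bm z),\dots,g_n(\bm z))$ of rank at least $r:=\operatorname{rank}(F_2)\ge 5$.

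The key step is a one-dimensional oscillatory-integral estimate applied repeatedly. For a fixed $\bm z$ and fixed $y_2,\dots,y_n$, the inner integral over $y_1\in[y_1^-,y_1^+]$ is $\int e(\gamma(c+y_1 g_1(\bm z)))\,dy_1$, which by the trivial evaluation of a linear exponential integral is $\ll\min(1,|\gamma g_1(\bm z)|^{-1})=\min(1,(|\gamma|\,|g_1(\bm z)|)^{-1})$. Iterating this over several of the $y_j$ (using that we can, after a further linear change among the $\bm z$-variables, arrange $r$ of the forms $g_j$ to be among a set of independent coordinates) gives a bound of the shape
$$\mathcal J(\gamma,\mathfrak B)\ll \int_{\mathfrak B'}\prod_{j=1}^{r}\min\!\left(1,\frac{1}{|\gamma|\,|g_j(\bm z)|}\right)d\bm z,$$
where $\mathfrak B'$ is a bounded region. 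One then estimates this integral by splitting the range of each $g_j(\bm z)$ dyadically: the portion where $|g_j(\bm z)|\le|\gamma|^{-1}$ contributes $\ll|\gamma|^{-1}$ in that variable, and the portion where $|g_j(\bm z)|\in(2^k|\gamma|^{-1},2^{k+1}|\gamma|^{-1}]$ contributes $\ll 2^{-k}|\gamma|^{-1}$ times the measure of that slab, which is $\ll 2^k|\gamma|^{-1}$ (a slab of a bounded region between two parallel hyperplanes at distance $\asymp 2^k|\gamma|^{-1}$), so each variable contributes $\ll |\gamma|^{-1}\log(2+|\gamma|)$. Collecting $r\ge 5$ such factors yields $\mathcal J(\gamma,\mathfrak B)\ll |\gamma|^{-r}(\log(2+|\gamma|))^{r}\ll|\gamma|^{-2-\varepsilon}$, the logarithmic loss absorbed into the $\varepsilon$ and the exponent $-2$ valid since $r\ge 5>2$; for $|\gamma|\le 1$ one simply uses the first bound $\operatorname{Meas}(\mathfrak B)$, so the claimed minimum holds for all $\gamma$.

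The main obstacle I anticipate is the bookkeeping needed to justify the product structure: the forms $g_1,\dots,g_n$ need not all be independent as functions of $\bm z$, so one must first apply a unimodular (or at least bounded-coefficient) linear substitution in the $\bm z$-variables so that $r$ of them become coordinate functions, and verify that this substitution changes $\operatorname{Meas}(\mathfrak B)$ and the region of integration only by bounded factors (this is where $\operatorname{rank}(F_2)=r$ is genuinely used). A cleaner alternative, which avoids iterating over possibly-dependent forms, is to use that the Hessian of $f$ restricted to suitable coordinate subspaces has rank $\ge r$ and invoke a standard van der Corput / stationary-phase bound for nondegenerate quadratic phases: a quadratic form in $r$ variables with nonsingular part gives $\int e(\gamma f)\ll|\gamma|^{-r/2}$, and $r/2\ge 5/2>2$. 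Either route delivers the exponent $2+\varepsilon$ comfortably; the write-up should pick whichever makes the dependence on the linear changes of variable least painful, and I would lean toward the explicit iterated-integral estimate since it matches the style of Lemma \ref{Ldifferencing} already in the paper.
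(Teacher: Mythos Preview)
Your approach is genuinely different from the paper's and, once repaired, would work; but there is a real gap in your primary argument. You claim that for fixed $\bm z$ and fixed $y_2,\dots,y_n$ the phase is $c+y_1 g_1(\bm z)$, i.e.\ linear in $y_1$. It is not: the quadratic form $r(\bm y)$ contributes $F_{11}y_1^2$, and the linear coefficient in $y_1$ is $2\sum_{j\ge 2}F_{1j}y_j+g_1(\bm z)$ rather than $g_1(\bm z)$ alone. So the inner integral is not the linear exponential integral you wrote down, and the iterated bound $\prod_j\min(1,|\gamma g_j(\bm z)|^{-1})$ does not follow. The natural repair is to apply Cauchy--Schwarz in the $\bm y$-variables first and difference in $\bm z$, i.e.\ the continuous analogue of Lemma~\ref{Ldifferencing}: then $r(\bm y)$ genuinely disappears and the phase becomes $\sum_j y_j g_j(\bm v)$, after which your iterated-integral estimate goes through and yields $|\mathcal J|^2\ll(|\gamma|^{-1}\log|\gamma|)^r$, hence $\mathcal J\ll|\gamma|^{-r/2+\varepsilon}$. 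Your stationary-phase alternative also works in principle, though over a sharp box rather than a smooth cutoff the boundary terms need care.

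For comparison, the paper takes a completely different and rather economical route: it does not estimate the oscillatory integral directly at all. Instead it applies the already-proved discrete Weyl estimate (Lemma~\ref{Lweyl}) to the trivial sequence $b_x=1$, obtaining a bound for $T(\lambda)=\sum_{0\le\bm x\le X}e(\lambda f(\bm x))$; separately, the major-arc approximation of Theorem~\ref{Tmajor} for the same trivial sequence gives $T(\lambda)=X^s\mathcal J(\gamma,\mathfrak B)+O(\gamma X^sL^{-m'})$. Equating these two expressions bounds $\mathcal J$ in terms of $X$ and $\gamma$, and since $\mathcal J$ is independent of $X$ one is free to choose $X$ so that $|\gamma|=L^D$ for a well-chosen $D$, which yields $\mathcal J\ll|\gamma|^{-2-\varepsilon}$. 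The advantage of the paper's method is that it recycles Lemma~\ref{Lweyl} and Theorem~\ref{Tmajor} wholesale and needs no new analysis; the advantage of your (corrected) direct method is that it gives the sharper exponent $r/2$ and is self-contained.
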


\begin{proof}
It is immediate that $\mathcal J$ is upper bounded by $\textup{Meas}(\mathfrak B)$ by taking the absolute value within the integral. For the second upper bound we consider the sequence $b_x = 1$ for all $x \in \NN$. This sequence meets Condition \ref{CL2} with $c=0$ and therefore we can apply Lemma \ref{Lweyl} on $T(\alpha) := \sum_{0 \leq \bm x \leq X} b_x e(\alpha f(\bm x))$ to see that 
    $$T(\lambda) \ll X^{s}L^{\frac s4}\left(\frac 1X + \frac 1{1+|\gamma|} + \frac {1 + |\gamma|}{X^2}\right)^{\frac r2},$$
where we have taken $a =0, q=1$ and $\gamma = X^2\lambda$ and, because $c=0$, we know $C \leq 1/4$. Furthermore, the function $X$ is a smooth approximation for the sequence $b_x$ and it has an error of $O(1)$. Therefore, by a similar argument as in the proof of Theorem \ref{Tmajor}, we have that 
    $$T(\lambda) = X^{s} \mathcal J(\gamma, \mathfrak B) + O(\gamma X^{s}L^{-m'}),$$
where we may take $m'$ as big as we like. For our purposes any $m' > \frac 32s$ will suffice, as we will see later on.
Combining the two facts about $T(\lambda)$ we find that 
    $$\mathcal J(\gamma, \mathfrak B) \ll |\gamma| L^{-m'} + L^{\frac s4} \left(\frac 1 X + \frac 1 {1+|\gamma|} + \frac{1+|\gamma|}{X^2}\right)^{\frac r2}.$$
Since the left hand side does not depend on $X$, we may choose $X$ depending on $\gamma$. Let $X$ be so that $|\gamma| = L^D$ where $D>0$ is some constant such that $\frac s4 - (\frac r2-2) D < 0$ and $m' > 3D$. We find that
    $$\mathcal J(\gamma, \mathfrak B) \ll L^{-2D - \varepsilon} = |\gamma|^{-2-\varepsilon}.$$
Note that in particular such a $D$ always exists if $m' > \frac {3s}{2r-8}$. This holds whenever $m' > \frac 32s$ by Condition \ref{CL1}.   
\end{proof}

\begin{lem}\label{Lcompsing}
Suppose that $\Psi(X)$ is the smooth approximation of $A(X)$ and that Condition \emph{\ref{CL1}} holds. Then
        $$\mathfrak I_{\mathfrak B}(X) = \frac{\Psi(X)^s}{X^2} \int_{-\infty}^{\infty} \frac{X^s}{\Psi(X)^s} \mathcal I(\gamma, \mathfrak B, X)e(-\gamma t_0) d\gamma + O\left(\frac{\Psi(X)^s}{X^2L^B}\right).$$
\end{lem}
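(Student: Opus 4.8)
The plan is to show that extending the $\gamma$-integral from $(-P,P)$ to $(-\infty,\infty)$ in the definition of $\mathfrak I_{\mathfrak B}(X)$ costs only the claimed error. Concretely, I would write
\[
\mathfrak I_{\mathfrak B}(X) = \frac{\Psi(X)^s}{X^2}\int_{-\infty}^{\infty}\frac{X^s}{\Psi(X)^s}\mathcal I(\gamma,\mathfrak B,X)e(-\gamma t_0)\,d\gamma - \frac{\Psi(X)^s}{X^2}\int_{|\gamma|>P}\frac{X^s}{\Psi(X)^s}\mathcal I(\gamma,\mathfrak B,X)e(-\gamma t_0)\,d\gamma,
\]
so it suffices to bound the tail integral $\int_{|\gamma|>P}\tfrac{X^s}{\Psi(X)^s}|\mathcal I(\gamma,\mathfrak B,X)|\,d\gamma$ by $O(L^{-B})$. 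Here I should first double-check that the rescaling identity from the start of this section, together with $X^2/Q = P$, really does give the displayed expression for $\mathfrak I_{\mathfrak B}(X)$ with the finite range $(-P,P)$ — this is just the change of variables $\bm u \to X\bm u$, $\lambda = X^{-2}\gamma$ already carried out above.

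The main work is the tail bound, and the key input is Lemma \ref{LupbndJ}. First I would relate $\mathcal I(\gamma,\mathfrak B,X)$ to $\mathcal J(\gamma,\mathfrak B)$: writing $\psi(X\bm u) = \prod_{i=1}^s \psi(Xu_i)$ and using Property (2) of smooth approximability, $\psi(Xu_i) \asymp A(X)/X \asymp \Psi(X)/X$ for $u_i$ in the compact set, so crudely $\tfrac{X^s}{\Psi(X)^s}|\mathcal I(\gamma,\mathfrak B,X)| \ll \mathrm{Meas}(\mathfrak B) \ll 1$ — but that is not summable in $\gamma$. To get decay I instead pull the weight out more carefully: partial summation / integration by parts in each coordinate $u_i$ lets me replace $\psi(Xu_i)$ by its value at an endpoint plus an integral of $\psi'(Xu_i)\cdot X$, and Property (3), $\int |\psi'(Xu)|\,X\,du \ll A(X)/X \asymp \Psi(X)/X$, controls that remainder. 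The upshot is a bound of the shape
\[
\frac{X^s}{\Psi(X)^s}\bigl|\mathcal I(\gamma,\mathfrak B,X)\bigr| \ll \max_{\mathfrak B'\subseteq\mathfrak B \text{ subbox}} \bigl|\mathcal J(\gamma,\mathfrak B')\bigr| \ll |\gamma|^{-2-\varepsilon},
\]
uniformly in $X$, where the last step is Lemma \ref{LupbndJ} applied to each sub-box (its hypothesis, Condition \ref{CL1}, is assumed). Then
\[
\int_{|\gamma|>P}\frac{X^s}{\Psi(X)^s}\bigl|\mathcal I(\gamma,\mathfrak B,X)\bigr|\,d\gamma \ll \int_{P}^{\infty}\gamma^{-2-\varepsilon}\,d\gamma \ll P^{-1-\varepsilon} = L^{-B(1+\varepsilon)} \ll L^{-B},
\]
and multiplying by $\Psi(X)^s/X^2$ gives exactly the error term $O(\Psi(X)^s X^{-2} L^{-B})$ claimed.

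I expect the main obstacle to be making the passage from $\mathcal I$ to $\mathcal J$ genuinely uniform in $X$: Lemma \ref{LupbndJ} is stated for the unweighted quantity $\mathcal J$, and one must argue that inserting the slowly varying weight $\psi(X\bm u)$ does not destroy the $|\gamma|^{-2-\varepsilon}$ decay. The clean way is the integration-by-parts reduction to sub-boxes sketched above, using Properties (2) and (3) of Definition \ref{Dsmthaprox} to bound the resulting weight factors by $O(A(X)/X)$ each; alternatively, one can absorb $\psi(X\bm u)/(\Psi(X)/X)^s$ into a single application of the Weyl estimate Lemma \ref{Lweyl} as in the proof of Lemma \ref{LupbndJ} (indeed the argument there already handles a general weight sequence satisfying Condition \ref{CL2}, and the rescaled weight does). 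Either route is routine once set up; the only care needed is that all implied constants depend on $\mathfrak B$ only through the fixed containing box, which is fine since $\mathfrak B \subset \mathfrak C_2$ is bounded.
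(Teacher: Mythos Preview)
Your proposal is correct and follows essentially the same route as the paper: the paper also integrates by parts in each coordinate $u_i$ (via iterated functions $\Phi_i$), uses Properties (2) and (3) of Definition~\ref{Dsmthaprox} to extract a factor $\Psi(X)/X$ at each step, and arrives at $\mathcal I(\gamma,\mathfrak B,X)\ll \Psi(X)^sX^{-s}\sup_{\bm u\in\mathfrak B}|\Phi(\gamma,\bm u)|$ with $\Phi(\gamma,\bm u)=\mathcal J(\gamma,\prod_i[y_i,u_i])$, then applies Lemma~\ref{LupbndJ} to each sub-box and integrates the resulting $|\gamma|^{-2-\varepsilon}$ over $|\gamma|>P$. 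Your sketch of the ``sub-box reduction'' is exactly this argument, and your identification of the uniformity-in-$X$ issue and its resolution via smooth approximability matches the paper's treatment.
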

\begin{proof}
Recall that
    $$\mathcal I(\gamma, \mathfrak B, X) = \int_{y_1}^{z_1}\psi(Xu_1) \cdots \int_{y_s}^{z_s} \psi(Xu_s) e(\gamma f(\bm u)) d\bm u.$$ 
    Define $$\Phi_1(\gamma, u_1) := \int_{y_1}^{u_1}\int_{y_2}^{z_2} \psi(Xu_2) \cdots \int_{y_s}^{z_s} \psi(Xu_s) e(\gamma f(w_1, u_2, \ldots, u_s)) du_s\cdots du_2dw_1.$$ Then, by partial integration in the variable $u_1$, we find
    $$\mathcal I(\gamma, \mathfrak B, X) = \left[\psi(Xu_1) \Phi_1(\gamma, u_1) \right]^{z_1}_{y_1} - \int_{y_1}^{z_1} X\psi'(Xu_1)\Phi_1(\gamma, u_1) du_1.$$
    Thus, 
    $$\mathcal I(\gamma, \mathfrak B, X) \ll \sup_{u_1 \in (y_1, z_1)}|\Phi_1(\gamma, u_1)|\left(|\psi(Xz_1)| + |\psi(Xy_1)| + \int_{y_1}^{z_1}|X\psi'(Xu_1)| du_1 \right).$$
    By the properties of Definition \ref{Dsmthaprox} and by Lemma \ref{Lsmthaprox} each term within the brackets is asymptotically bounded by $\Psi(X)X^{-1}$ and we have
    $$\mathcal I(\gamma, \mathfrak B, X) \ll \sup_{u_1 \in (y_1, z_1)} |\Phi_1(\gamma, u_1)| \Psi(X)X^{-1}.$$
    Let $1 \leq i \leq s$, write $\bm v_i$ for $(u_1, \ldots, u_{i-1}, w_i, u_{i+1}, \ldots, u_s)$ and define
    $$\Phi_i(\gamma, u_1, \ldots, u_i) := \int_{y_i}^{u_i}\int_{y_{i+1}}^{z_{i+1}} \psi(Xu_{i+1}) \cdots \int_{y_s}^{z_s} \psi(Xu_s) e(\gamma f(\bm v_i)) du_s \cdots du_{i+1}dw_i.$$
    In the following we will write $\Phi_i$ to mean $\Phi_i(\gamma, u_1, \ldots, u_i)$. 
    Let $2 \leq i \leq s$ then, by partial integration of the integral with respect to $u_{i}$, we get
    \begin{align*}
        \Phi_{i-1} = \int_{y_{i-1}}^{u_{i-1}} \left(\left[\psi(Xu_i) \Phi_i\right]_{u_i =y_i}^{z_i} -  
        \int_{y_i}^{z_i} X\psi'(Xu_i) \Phi_i du_i\right)dw_{i-1}.
    \end{align*}
    This shows that
    $$\Phi_{i-1}  \ll \int_{y_{i-1}}^{u_{i-1}}  \sup_{u_i \in (y_i, z_i)} |\Phi_i| \left(|\psi(Xz_i)| + |\psi(Xy_i)| + \int_{y_i}^{z_i}|X\psi'(Xu_i)|du_i\right) dw_{i-1},$$
    which by the same argument as above gives us
    $$\Phi_{i-1} \ll \int_{y_{i-1}}^{u_{i-1}} \sup_{u_i \in (y_i, z_i)} |\Phi_i| \Psi(X)X^{-1} dw_{i-1}.$$
    By induction we then find that for all $1 \leq i \leq s-1$ we have
    $$\sup_{u_1 \in (y_1, z_1)} |\Phi_1| \ll \Psi(X)^iX^{-i} \sup_{\substack{u_j \in (y_j, z_j) \\ \textup{for } 1 \leq j \leq i}}|\Phi_{i+1}|. $$
    Combining this with our estimate for $\mathcal I(\gamma, \mathfrak B, X)$ from above, we get
    $$\mathcal I(\gamma, \mathfrak B, X) \ll \sup_{\bm u \in \mathfrak B} |\Phi(\gamma, \bm u)|\Psi(X)^sX^{-s},$$
    where 
    $$\Phi(\gamma, \bm u) = \int_{y_1}^{u_1}\cdots \int_{y_s}^{u_s} e(\gamma f(\bm t)) dw_s \cdots dw_1.$$
    Lemma \ref{LupbndJ} showed that $\Phi(\gamma, \bm u) \ll \min(1, |\gamma|^{-2-\varepsilon})$, from which we deduce that 
    $$\mathcal I(\gamma, \mathfrak B, X) \ll \Psi(X)^sX^{-s}|\gamma|^{-2-\varepsilon}.$$
    Hence,
    $$\int_{P}^{\infty} X^s\Psi(X)^{-s} \mathcal I(\gamma, \mathfrak B, X) d\gamma \ll \int_{P}^{\infty} |\gamma|^{-2-\varepsilon} d\gamma \ll P^{-1}$$
    and the same holds for the integral between $-\infty$ and $-P$. This shows that
    \begin{equation*}\mathfrak I_{\mathfrak B}(X) = \frac{\Psi(X)^s}{X^2} \int_{-\infty}^{\infty} \frac{X^s}{\Psi(X)^s} \mathcal I(\gamma, \mathfrak B, X) e(-\gamma t_0)d\gamma + O\left(\frac{\Psi(X)^s}{X^2P}\right).\qedhere \end{equation*}
\end{proof}

\subsection*{Bounds for the singular integral}
Now that we have completed the singular integral we will show that it has the expected asymptotic upper and lower bounds of the form $\frac{\Psi(X)^s}{X^2}$. To do so we study the following object 
$$\tilde{\mathfrak I}_{\mathfrak B}(Y,X) =\int_{-Y}^Y \mathcal I(\gamma, \fr{B}, X) e(-\gamma t_0) d\gamma$$ 
for a suitable box $\mathfrak B$. In fact, our choice for $\mathfrak B$ will depend slightly on the height $X$, because it will depend on $t_0 = \frac t {X^2}$. The following lemma implies that counting the weighted solutions within the box $X\fr{B}$ is lower bounded and upper bounded by the total of weighted solutions in the boxes $X\fr{C}_1$ and $X\fr C_2$ respectively, where $\fr C_1$ and $\fr C_2$ do not depend on $X$. Since we assume $A(X)$ to be smoothly approximable, we know that asymptotically the weighted measure of the three boxes is the same. 

\begin{lem}\label{Lbox}
Assume that $f(\bm x) = 0$ has a solution in $\RR_{>0}^s$. Whenever $\eta > 0$ is sufficiently small there exist boxes $\mathfrak C_1$ and $\mathfrak C_2$ in $\RR_{>0}^s$, such that for all sufficiently large $X$ there is a solution  $\bm x^{\ast}\in\RR_{>0}^s$  to $f(\bm x) = t_0$  for which
   $\mathfrak C_1 \subset \bm x^{\ast} + [-\eta, \eta]^s \subset \mathfrak C_2$. 
\end{lem}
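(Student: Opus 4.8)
The plan is to exploit continuity of the quadratic form together with the hypothesis that $t_0 = t/X^2 \to 0$ as $X \to \infty$. Fix a solution $\bm w \in \RR_{>0}^s$ to $f(\bm w) = 0$, which exists by assumption. Since $\bm w$ has strictly positive coordinates, there is a closed box $\mathfrak D \subset \RR_{>0}^s$ and a radius $\delta > 0$ with $\bm w + [-\delta,\delta]^s \subset \mathfrak D$. Choose $\eta < \delta/2$. The first step is to produce, for each large $X$, a point $\bm x^{\ast}$ near $\bm w$ with $f(\bm x^{\ast}) = t_0$. For this I would restrict $f$ to a line or a one-parameter curve through $\bm w$ along which $f$ is non-constant — concretely, pick a direction $\bm e$ (a coordinate direction, say $\bm e = \bm e_i$) such that $\partial f/\partial x_i(\bm w) \neq 0$; such a direction exists because $f$ is non-singular, hence $\nabla f(\bm w) \neq \bm 0$ unless $\bm w = \bm 0$, and $\bm w \neq \bm 0$ since its coordinates are positive. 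Then $g(\tau) := f(\bm w + \tau \bm e)$ satisfies $g(0) = 0$ and $g'(0) \neq 0$, so by the inverse function theorem (or just the intermediate value theorem applied to the continuous, locally monotone $g$) there is, for all sufficiently small $|t_0|$, a value $\tau = \tau(t_0)$ with $g(\tau) = t_0$ and $|\tau| \ll |t_0| \to 0$. Set $\bm x^{\ast} = \bm w + \tau \bm e$.

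The second step is to nest the boxes. For $X$ large enough $|\tau(t_0)| < \eta$, so $\bm x^{\ast} \in \bm w + [-\eta,\eta]^s$, whence
$$
\bm w + [-\eta,\eta]^s \ \subset\ \bm x^{\ast} + [-2\eta, 2\eta]^s \ \subset\ \bm w + [-3\eta, 3\eta]^s,
$$
and symmetrically $\bm x^{\ast} + [-\eta,\eta]^s \supset \bm w + [-2\eta,2\eta]^s \cap (\ldots)$; more simply, since $\|\bm x^{\ast} - \bm w\|_\infty < \eta$ we get the two inclusions
$$
\bm w + [-\tfrac{\eta}{2}, \tfrac{\eta}{2}]^s \ \subset\ \bm x^{\ast} + [-\eta, \eta]^s \ \subset\ \bm w + [-2\eta, 2\eta]^s.
$$
Now define $\mathfrak C_1 := \bm w + [-\tfrac{\eta}{2}, \tfrac{\eta}{2}]^s$ and $\mathfrak C_2 := \bm w + [-2\eta, 2\eta]^s$, both of which lie in $\RR_{>0}^s$ provided $\eta < \delta/2$ (using $\bm w + [-\delta,\delta]^s \subset \mathfrak D \subset \RR_{>0}^s$), and both independent of $X$. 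The displayed chain gives exactly $\mathfrak C_1 \subset \bm x^{\ast} + [-\eta,\eta]^s \subset \mathfrak C_2$, as required.

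The only genuinely delicate point is the uniformity in $X$: one must check that the threshold beyond which $|\tau(t_0)| < \eta$ holds can be taken uniform, which is immediate here because $t_0 = t/X^2$ is a fixed sequence tending to $0$ and $\tau(t_0) = O(t_0)$ with an implied constant depending only on $f$ and $\bm w$ (from the lower bound $|g'| \gg 1$ near $0$). A secondary subtlety is the choice of the non-constant direction: if for some reason every coordinate partial of $f$ vanished at $\bm w$ one would instead move along a generic direction $\bm v$, noting $\bm v^T \nabla f(\bm w) \neq 0$ for all but a hyperplane of $\bm v$, and rescale; but since $\bm w \in \RR_{>0}^s$ and $f$ is a nonzero quadratic form, $2 F \bm w = \nabla f(\bm w)$ cannot vanish unless $\bm w$ lies in the kernel of $F$, which is empty as $f$ is non-singular. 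Hence a coordinate direction always works, and no extra case analysis is needed. This completes the plan.
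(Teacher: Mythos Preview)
Your approach is essentially identical to the paper's: locate a solution $\bm x^{\ast}$ near the fixed zero $\bm w$ by moving along a coordinate direction in which $\nabla f(\bm w)$ has a nonzero component (the paper invokes the same non-singularity argument), then sandwich $\bm x^{\ast} + [-\eta,\eta]^s$ between the very same fixed boxes $\mathfrak C_1 = \bm w + [-\eta/2,\eta/2]^s$ and $\mathfrak C_2 = \bm w + [-2\eta,2\eta]^s$ via the triangle inequality.

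There is one small arithmetic slip in your nesting step. From $\|\bm x^{\ast} - \bm w\|_\infty < \eta$ alone the inclusion $\bm w + [-\eta/2,\eta/2]^s \subset \bm x^{\ast} + [-\eta,\eta]^s$ can fail: the triangle inequality only gives $\|\bm y - \bm x^{\ast}\|_\infty \le \eta/2 + \eta = 3\eta/2$ for $\bm y \in \mathfrak C_1$. You need $\|\bm x^{\ast} - \bm w\|_\infty \le \eta/2$, which you obtain simply by taking $X$ large enough that $|\tau(t_0)| < \eta/2$ rather than $< \eta$; this is precisely the threshold the paper imposes (their $\delta = \eta/2$). With that adjustment your argument is complete and matches the paper's.
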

\begin{proof}
    Write $\bm x_0$ for the positive real solution to $f(\bm x) = 0$, that we have by our assumptions. Since $f$ is non-singular we know that whenever $\bm x$ is sufficiently close to $\bm x_0$ we have
    $$f(\bm x) = \sum_{i=1}^s\frac{\partial f}{\partial x_i}(\bm x_0) (x_i - x_{0, i}) + O(G_2),$$
    where $G_2$ is a degree $2$ form in $(\bm x - \bm x_0)$ and we may assume without loss of generality that $\frac{\partial f}{\partial x_1}(\bm x_0) \neq 0.$ Hence, for any $\delta > 0$ we can take $t_0$ sufficiently small such that there exists a solution $\bm x^\ast$ to $f(\bm x^\ast) = t_0$ and $|\bm x_0 - \bm x^{\ast}| < \delta$. Suppose that $\eta > 0$ is such that $\bm x_0 + [-2\eta, 2\eta]^s \subset \RR_{>0}^s$. Taking $\delta = \eta/2$ gives us, that whenever $X$ is sufficiently large, there exists a solution $\bm x^{\ast}$ to $f(\bm x^{\ast}) = t_0$ for which $|\bm x^{\ast} - \bm x| < \eta/2$. We define $$\mathfrak C_1 := \bm x_0 + [-\eta/2, \eta/2]^s, ~~~~~~~ \mathfrak C_2 := \bm x_0 + [-2\eta, 2\eta] ~~~~\textup{ and }~~~~\mathfrak B := \bm x^{\ast} + [-\eta, \eta]^s.$$ We will prove that $\mathfrak C_1 \subset \mathfrak B \subset \mathfrak C_2$. Let $\bm y _1 \in \mathfrak C_1$ and $\bm y_2 \in \mathfrak B$. We find
    $$|\bm x^{\ast} - \bm y_1| \leq |\bm x^{\ast} - \bm x_0| + |\bm x_0 - \bm y_1| \leq \eta/2 + \eta/2 = \eta$$
    and 
    $$|\bm x_0 - \bm y_2| \leq |\bm x_0 - \bm x^{\ast}| + |\bm x^{\ast} - \bm y_2| \leq \eta/2 + \eta < 2\eta.$$
    Thus, $\bm y_1 \in \mathfrak B$ and $\bm y_2 \in \mathfrak C_2$ and we are done.
\end{proof}

\begin{defi}\label{D boxB}
    Let $\bm x^{\ast}$ and $\eta > 0$ be as in Lemma \ref{Lbox}. We define $\mathfrak B:= \bm x^{\ast} + [-\eta, \eta]^s$.
\end{defi}

\begin{prop}\label{Pboundsing}
    Suppose that $A(X)$ is smoothly approximable and that $f(\bm x) = 0$ has a solution in $\RR^s_{>0}$. Let $X$ be fixed and define $\mathfrak B$ as above. Then
    $$I_0(X):= \lim_{Y \rightarrow \infty} \tilde{\mathfrak I}_{\mathfrak B}(Y,X) \asymp X^{-s} A(X)^s.$$
\end{prop}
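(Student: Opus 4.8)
The plan is to reduce the limit $I_0(X) = \lim_{Y\to\infty}\tilde{\mathfrak I}_{\mathfrak B}(Y,X)$ to a Fourier-integral identity and then read off the asymptotics from the smooth approximability of $A(X)$. By Lemma \ref{Lcompsing} (applied with the box $\mathfrak B$ of Definition \ref{D boxB}) the tail $\int_{|\gamma|>Y}$ is $O(Y^{-1-\varepsilon})$ uniformly in $X$, so the limit as $Y\to\infty$ exists and
\[
I_0(X) = \int_{-\infty}^{\infty} \mathcal I(\gamma,\mathfrak B,X)e(-\gamma t_0)\,d\gamma.
\]
Since $\mathcal I(\gamma,\mathfrak B,X) = \int_{\mathfrak B}\psi(X\bm u)e(\gamma f(\bm u))\,d\bm u$ and $\psi(X\bm u)\asymp (A(X)/X)^s$ on $\mathfrak B$ by property (2) of Definition \ref{Dsmthaprox}, the heuristic is that $I_0(X)$ behaves like $(A(X)/X)^s$ times the unweighted singular integral $\int_{-\infty}^{\infty}\mathcal J(\gamma,\mathfrak B)e(-\gamma t_0)\,d\gamma$, and the latter is a positive constant because $f(\bm x)=t_0$ has a non-singular real solution in the interior of $\mathfrak B$ with $\eta$ small. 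So the real content is (a) making the replacement $\psi(X\bm u)\rightsquigarrow$ (its "average value") rigorous, and (b) showing the resulting constant is bounded above and below independently of $X$.

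For the \textbf{upper bound} I would interchange the $\gamma$-integral and the $\bm u$-integral — justified by the absolute convergence coming from the decay $\mathcal J(\gamma,\cdot)\ll|\gamma|^{-2-\varepsilon}$ of Lemma \ref{LupbndJ} together with $|\psi(X\bm u)|\ll (A(X)/X)^s$ — to write $I_0(X)$ as $\int_{\mathfrak B}\psi(X\bm u)\big(\int_{-\infty}^{\infty}e(\gamma(f(\bm u)-t_0))\,d\gamma\big)\,d\bm u$, which after the usual smoothing/limiting argument becomes a surface integral of $\psi(X\bm u)$ over $\{f(\bm u)=t_0\}\cap\mathfrak B$ against the Leray measure. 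Bounding $\psi(X\bm u)\ll (A(X)/X)^s$ pointwise and noting the surface has bounded $(s-1)$-dimensional measure gives $I_0(X)\ll A(X)^sX^{-s}$; more carefully one bounds $|\mathcal I(\gamma,\mathfrak B,X)|\ll (A(X)/X)^s\min(1,|\gamma|^{-2-\varepsilon})$ directly (this is essentially the estimate already derived inside the proof of Lemma \ref{Lcompsing}, with $\Psi(X)^s/X^s$ there comparable to $A(X)^s/X^s$) and integrates in $\gamma$.

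For the \textbf{lower bound}, which I expect to be the main obstacle, the point is that one cannot just bound below inside an oscillatory integral. The clean route is to exploit positivity of $\psi$: write $\psi(X\bm u) = c(X)\big(1 + \epsilon(\bm u,X)\big)$ where $c(X)\asymp A(X)/X$ is, say, $\inf_{\mathfrak B}\psi(X\cdot)$ or a suitable average, so that $I_0(X) = c(X)^s\int_{-\infty}^{\infty}\mathcal J(\gamma,\mathfrak B)e(-\gamma t_0)\,d\gamma + (\text{perturbation})$. The main term is $c(X)^s$ times the classical singular integral for $f$ over the box $\mathfrak B$, which is a strictly positive real density $\sigma_\infty(\mathfrak B) > 0$: positivity holds because Lemma \ref{Lbox} guarantees a non-singular solution $\bm x^\ast$ of $f = t_0$ lying in the interior $\bm x^\ast + (-\eta,\eta)^s \subset \mathfrak B$, so the Leray measure of $\{f=t_0\}\cap\mathfrak B$ is bounded below (uniformly in $X$, since $\bm x^\ast$ ranges over a compact set and $\nabla f$ is bounded away from $0$ there). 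The obstacle is controlling the perturbation term uniformly in $X$; one handles it by noting that $\psi(X\bm u)/c(X)$ lies in $[1, O(1)]$ on $\mathfrak B$ with total variation $O(1)$ in each variable (property (3) of Definition \ref{Dsmthaprox} rescaled via Lemma \ref{Lsmthaprox}), so the same partial-integration scheme as in Lemma \ref{Lcompsing} shows the perturbation contributes $\ll c(X)^s$ times a quantity we can absorb, or — if $\eta$ is chosen small enough that the oscillatory integral $\int e(\gamma f)$ is dominated by a neighbourhood of $\bm x^\ast$ — simply positive-definite on the main contributing region. Combining the two bounds yields $I_0(X) \asymp X^{-s}A(X)^s$, with all implied constants depending only on $f$, $\mathfrak C_1$, $\mathfrak C_2$ and the smooth-approximation constants, hence independent of $X$.
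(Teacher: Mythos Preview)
Your upper bound is fine: the estimate $|\mathcal I(\gamma,\mathfrak B,X)|\ll (\Psi(X)/X)^s\min(1,|\gamma|^{-2-\varepsilon})$ is exactly what was proved inside Lemma~\ref{Lcompsing}, and integrating over $\gamma$ gives $I_0(X)\ll A(X)^sX^{-s}$.

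The lower bound, however, has a real gap. In your decomposition $\psi(X\bm u)=c(X)(1+\epsilon(\bm u,X))$ nothing in Definition~\ref{Dsmthaprox} forces $\epsilon(\bm u,X)$ to be small: property~(2) only gives $\psi(X\bm u)\asymp A(X)/X$, so $\epsilon$ is $O(1)$ uniformly, not $o(1)$. Thus the ``perturbation'' integral is a priori of the \emph{same} order $c(X)^s$ as the main term, and there is no mechanism to make its constant smaller than $\sigma_\infty(\mathfrak B)$. Shrinking $\eta$ does not help either, because the constant in property~(3) is allowed to depend on $y,z$ in an unspecified way; you cannot conclude that the variation of $\psi(X\,\cdot)$ over $\mathfrak B$ becomes small relative to its size as $\eta\to 0$. (Your Fubini step also needs care: $\int_{\RR}e(\gamma(f(\bm u)-t_0))\,d\gamma$ diverges pointwise, so absolute convergence fails and the interchange is not justified as written.)

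The paper avoids the oscillatory lower-bound problem altogether by performing the $\gamma$-integral \emph{first}, which turns $\tilde{\mathfrak I}_{\mathfrak B}(Y,X)$ into $\int_{\mathfrak B}\frac{\sin(2\pi Y(f(\bm u)-t_0))}{\pi(f(\bm u)-t_0)}\psi(X\bm u)\,d\bm u$. One then makes the change of variables $\bm u\mapsto \bm x^\ast+\bm v$ and, using the non-singularity of $\bm x^\ast$, replaces $v_1$ by $w:=f(\bm x^\ast+\bm v)-t_0$. This casts the integral as $\int_{-\vartheta}^{\vartheta}\frac{\sin(2\pi Yw)}{\pi w}K(w)\,dw$ where $K$ has bounded variation, and Fourier's integral theorem gives $I_0(X)=K(0)$ directly. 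The crucial gain is that $K(0)$ is an integral over $[-\eta,\eta]^{s-1}$ of a \emph{positive} integrand (a Jacobian factor in $(\tfrac12,\tfrac32)$ times a product of $\psi$-values, and $\psi>0$ by property~(2)), so the lower bound follows from the mean value theorem together with $\psi(Xu)\asymp A(X)/X$ --- no cancellation to fight.
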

\begin{proof}
We start with the expression for $\tilde{\mathfrak I}_{\mathfrak B}(Y,X)$ above and integrate over $\gamma$ to find that 
$$\tilde{\mathfrak I}_{\mathfrak B}(Y,X) = \int_{\mathfrak B} \frac{\sin(2\pi Y (f(\bm u) - t_0))}{\pi (f(\bm u)-t_0)} \psi(X\bm u) d\bm u.$$ 
Our first objective will be to rewrite our integral such that we can use Fourier's Theorem, see Section 9.7 \cite{Whittaker_Watson_1996}, which states
$$\lim_{Y \rightarrow \infty} \int_{[a, b]} \frac{\sin(2\pi Y w)}{\pi w} K(w) = K(0),$$ whenever $K(w)$ is a real function on $[a, b]$ of bounded variation.

Write $\bm x^{\ast} = (x_1^\ast,\ldots,x_s^\ast)$ for the non-singular positive real solution to the equation $f(\bm x) = t_0$, which we know exists by the proof of Lemma \ref{Lbox}. Then we know that
$$d_j:=\frac{\partial f}{\partial x_j}(\bm x^\ast)\neq 0\text{ for some }1\leq j \leq s.
$$Without loss of generality we assume that $j=1$ and $d_1=1$.

We do a change of variables to $\bm v = \bm u - \bm x^{\ast}$ such that our domain of integration changes to $[-\eta, \eta]^s$: 
$$ \tilde{\mathfrak I}_{\mathfrak B}(Y,X) =  \int_{[-\eta, \eta]^s} \frac{ \sin(2\pi Y (f(\bm x^{\ast} + \bm v) - t_0))}{\pi (f(\bm x^{\ast} + \bm v) - t_0)}\psi(X(\bm x^\ast + \bm v)) d\bm v .$$
To be able to replace $f(\bm x^{\ast} + \bm v)$ by something linear, we consider the Taylor expansion of $f$ near $\bm x^{\ast}$:
$$f(\bm x^\ast + \bm v) = v_1 + d_2 v_2 + \ldots + d_sv_s + G_2(\bm v),$$
where $G_2(\bm v)$ is a form of degree $2$. We define $w = f(\bm x^\ast + \bm v) - t_0$ and will do a change of variables from $v_1$ to $w$. By inverting the previous relation, we have that
$$ v_1 = t_0 + w - d_2v_2 - \ldots - d_sv_s + G(w, v_2, \ldots, v_s),$$
where $G$ is a multiple power series starting with terms of degree at least $2$. Hence, our Jacobian equals
$$\frac{\partial v_1}{\partial w}=1+G_1(w,v_2,\ldots,v_s)$$
for some multiple power series with terms of degree at least $1$. We will write $G_1$ instead of $G_1(w, v_2, \ldots, v_s)$ to keep our notation uncluttered. Since we have taken $\eta$ to be sufficiently small, we know that there exists some positive $\vartheta = \vartheta(\eta)$ such that whenever $v_1, \ldots, v_s$ are in $[-\eta, \eta]$ we have that $|w| < \vartheta(\eta)$ and that $|G_1|< \frac 12$. Whence,
$$\tilde{\mathfrak I}_{\mathfrak B}(Y,X) =  \int_{w \in [-\vartheta, \vartheta]} \frac{\sin(2\pi wY)}{\pi w} K(w) dw,$$
where 
\begin{align*}
K(w)=\int_{[-\eta, \eta]^{s-1}}(1+&G_1)\psi(X(x_2^\ast+v_2))\psi(X(x_3^\ast+v_3))\cdots \\ & \psi(X(x_s^\ast+v_s))\psi(X(x_1^\ast+w-d_2v_2-\ldots -d_sv_s+\varepsilon))dv_2\cdots dv_s,\end{align*}
and $\varepsilon$ comes from $G$. The function $K(w)$ has bounded variation, because $|\psi(X)| \geq c_1A(X)X^{-1}$ for some $c_1 > 0$ which shows that $\psi(X)$ cannot equal $0$ for any $X$ and thus cannot change sign. We can now apply Fourier's Theorem from which we conclude that $I_0 = K(0)$. Therefore it suffices to show that $K(0) \asymp A(X)^sX^{-s}.$ 

Because $\frac 12 < 1 + G_1 < \frac 32$ for all $v_2, \ldots, v_s$ we may ignore this factor. For the remaining factors we use the mean value theorem to find $c_2, \ldots, c_s \in [-\eta, \eta]$ such that
$$K(0) = (2\eta)^{2s} \psi(X(x_1^\ast - d_2c_2 - \ldots - d_sc_s +\varepsilon))\prod_{i=2}^s \psi(X(x_i^\ast + c_i)).$$
Write $c_1 = -d_2c_2 - \ldots - d_s c_s + \varepsilon$. We find that $\bm x^{\ast} + \bm c \in \mathfrak C_2$ and therefore, using the properties of being smoothly approximable and Lemma \ref{Lsmthaprox}, that 
$$I_0 \asymp \frac{\Psi(X(\bm x^{\ast} + \bm c))}{X(\bm x^{\ast} + \bm c)} \asymp \frac{\Psi(X)^s}{X^s} \asymp \frac{A(X)^s}{X^s}.$$
This concludes the proof. 
\end{proof}

\subsection*{Volume interpretation of the singular integral}
By Proposition \ref{Pboundsing} we can write $I_0(X)$ as $\sigma_\infty A(X)^sX^{-s}$, where $\sigma_\infty >0$ is depending on $X$ but can be lower and upper bounded independent of $X$. Combining this with Lemma \ref{Lcompsing} yields that the main term of $\mathfrak I_{\mathfrak B}(X)$ equals $\sigma_\infty A(X)^sX^{-2}.$ We will show that this $\sigma_\infty $ can be interpreted as the local density of real solutions to $f(\bm x) = t_0$ inside of $\mathfrak B$. This is done in a similar way as in Section 5 in \cite{biggs2023minimalistversioncirclemethod}. 

First, we change the measure to the $\mathcal{A}$-normalized measure
$$ d\mu_\mathcal{A}(z)=\frac{X}{\Psi (X) }\psi(Xz)dz.$$
We denote $d\mu_\mathcal{A}(\bm u)=\prod_{i=1}^sd\mu_\mathcal{A}(u_i)$. 
Note that this is a probability measure for all $X$, since $\mu_\mathcal{A}([0,1])=1$. 
For the singular integral we obtain 

$$\tilde{\mathfrak I}_{\mathfrak B}(Y,X)=\frac{\Psi(X)^s}{X^s} \int_{-Y}^Y\left(\int_\fr{B}e(\gamma(f(\bm u)-t_0))d\mu_\mathcal{A}(\bm u)\right)d\gamma.$$ 
Let 
\begin{align*}
    V_{\mathcal{A}}(h)&=\int_{\fr{B}}\mathds {1}_{[f(\bm u)-t_0=h]}d\mu_\mathcal{A}(\bm u)\\
    &=\vol \{\bm u\in \fr{B} : f(\bm u)-t_0=h\}.
\end{align*}
Here $\vol$ denotes the volume with respect to the measure $\mu_\mathcal{A}$. Note that $V_\mathcal{A}(0)$ counts the $\mu_\mathcal{A}$-weighted solutions to $f(\bm u)=t_0$. We now explain how to relate this to our singular integral. For this we consider the inverse Fourier transform of $V_\mathcal{A}$ at $\gamma$:
\begin{align*}
    \left(\mathcal{F}^{-1}V_\mathcal{A}\right)(\gamma)&=\int_\RR V_\mathcal{A}(h)e(\gamma h)dh \\
&=\int_\RR\int_{\fr{B}}\mathds {1}_{[f(\bm u)-t_0=h]}d\mu_\mathcal{A}(\bm u)e(\gamma h)dh \\
&=\int_\fr{B} e(\gamma(f(\bm u)-t_0))d\mu_\mathcal{A}(\bm u).
\end{align*}
Note that this is exactly the inner integral of $\tilde{\mathfrak I}_{\mathfrak B}(Y,X)$. Hence, now taking the forward Fourier transform on both sides and formally evaluating at 0, we obtain 

\begin{align*}
V_\mathcal{A}(0)&=\int_\RR\int_\fr{B} e(\gamma(f(\bm u)-t_0))d\mu_\mathcal{A}(\bm u)d\gamma \\
&=\frac{X^s}{A(X)^s} I_0(X) = \sigma_\infty . 
\end{align*}
This shows that we may interpret $\sigma_\infty $ as the volume of solutions to $f(\bm x) = t_0$ for $\bm x \in \mathfrak B$, where we have weighted the volume with $\mu_\mathcal{A}$. By smooth approximability we have $d\mu_{\mathcal A}(z) \asymp 1$ for all $z \in \mathfrak B$. This additionally shows that $\sigma_\infty$ is asymptotically lower and upper bounded by the non-weighted volume of solutions to $f(\bm x) = t_0$ for $\bm x \in \mathfrak B$.  

\begin{rem}
    As $X \rightarrow \infty$ we have that $t_0$ goes to $0$. Hence the box $\mathfrak B$, as defined in the proof of Lemma \ref{Lbox}, converges to $\bm x_0 +[-\eta, \eta]^s$. From the above we can then conclude that $\sigma_{\infty}$ converges to the weighted volume of solutions to $f(\bm x)=0$ inside of $\bm x_0 + [-\eta, \eta]^s$.
\end{rem}

\section{Proof of the main theorems}\label{S main thm}
\begin{proof}[Proof of Theorem \ref{Tmain}]
    It follows from Equations (\ref{EQ countingtocircle}), (\ref{EQadjustmajorarcs}), (\ref{EQmajorint}) and Lemma \ref{Lminorarcs} that 
    $$R_{f, t}(X, \mathcal A) = \mathfrak S(P)\mathfrak I(X) + O(A(X)^s X^{-2} L^{-K}).$$ 
    By Equation  (\ref{EQsingserie}) we may complete the singular series to get
    $$R_{f, t}(X, \mathcal A) = \mathfrak S \mathfrak I(X) + O(A(X)^sX^{-2}L^{-K} + \mathfrak I(X)L^{-\frac B2}).$$
    Here $\mathfrak S$ is the same constant as in Equation (\ref {EQ untruncated sing series}) which by Equation (\ref{EQprodlocdens}) and Lemma \ref{LrelationChiandM} can indeed be interpreted as the product of local densities over the $p$-adics for all the primes $p$. Taking $B \geq 2K$ completes the proof.
    \end{proof}
    \begin{proof}[Proof of Theorem \ref{Tmainbox}] 
    The proof that 
    $$R_{f, t}(X, \mathcal A, \mathfrak B) = \mathfrak S \mathfrak I_{\mathfrak B}(X) + O(A(X)^s X^{-2} L^{-K} + \mathfrak I_{\mathfrak B}(X)L^{-\frac B2})$$
    goes analogous to the proof above. It remains to show the results about $\mathfrak I_\mathfrak B(X)$.
    By Lemma \ref{Lcompsing} we may also complete this singular integral at the cost of $O(A(X)^sX^{-2}L^{-B})$. 
    Since $B \geq 2K$ we find
    $$R_{f, t}(X,\mathcal A, \mathfrak B) = \mathfrak SI_0 + O\left({A(X)^s}{X^{-2}} L^{-K} + I_0L^{-K}\right),$$
    and by Proposition \ref{Pboundsing} we know that 
    $$I_0 \asymp A(X)^sX^{-2}.$$  
    Finally, the interpretation of 
    $$ \frac{X^s}{A(X)^s}\mathfrak I_{\mathfrak B}(X)$$
    as the local density over the reals follows from the last subsection of Section \ref{S singular integral}. 
\end{proof}
\begin{rem}
    We only use Condition \ref{CC} to interpret the constant $ \mathfrak S$. Hence, it would also be possible to state a theorem without this condition that would give the same asymptotic formulae but without the local density interpretation. 
\end{rem}
\section{Examples}\label{SExample}
In this section we will go through two explicit examples for $\mathcal{A}$. Our goal is to illustrate the many conditions in Theorem \ref{Tmain} in action. We do not claim that the theorems below are in any sense optimal, since, by using the specific arithmetic structure of the sequences, one would expect to be able to get better error terms. In both cases we consider a non-singular quadratic form $f(\bm x)$ in $s$ variables that satisfies Condition \ref{CL1}. In particular it is assumed that $s\geq 10$.

For our first example we choose the weights to be the indicator function of primes. We show this example because it gives a non-trivial case of smooth approximability. 

For the second example we consider the indicator function of the $k$-free numbers. This example shows that our conditions on the sequence $\mathcal{A}$ are reasonable and gives some insight on the information about $\mathcal{A}$ that is needed.

Note that the case of squarefree numbers has already been investigated by Baker in \cite{Baker1} and \cite{Baker2}. He obtains a better error term and this would likely also be possible for $k$-free numbers. 

\subsection*{Primes}
If the weights $a_x$ are the indicator function of primes the prime number theorem gives us 
$$A(X) =\sum_{x\leq X}\mathds{1}_{\text{prime}}(x)= XL^{-1} \sum_{k=0}^{m-1} k! L^{-k} + O(A(X){L^{-m}}).$$
From this it is easy to check that for a given $m$ in Definition \ref{Dsmthaprox} the series $A(X)$ is smoothly approximable by $\Psi(X) = XL^{-1}\sum_{k=0}^{m-1} k!L^{-k}$. 

Furthermore we know that $\kappa$ is given by 
$$\kappa(q,a)=\begin{cases} 
    0&\text{ if $(q,a)>1$}\\
    \frac{1}{\varphi(q)} &\text{ else.}
\end{cases}$$
Here $\varphi$ denotes Euler's totient function. 
Conditions \ref{CD}, \ref{CC} and \ref{CK} are now straightforward to check. Note \cite{biggs2023minimalistversioncirclemethod}, Lemma 7.1 for Condition \ref{CC}. Furthermore Condition \ref{CL2} is satisfied as well, see Remark \ref{R L2withindicatorfct}.
This shows the following theorem. 

\begin{thm}
    \label{Tmainprimes}
Consider $f$ such that Condition \emph{\ref{CL1}} is satisfied and $f(\bm x)=0$ has a solution in $\RR^s_{>0}$. Fix $X$ and define $\mathfrak B$ as in Definition \ref{D boxB}.
Let $R_{f, t}(X, \mathcal A, \mathfrak B)$ be the number of solutions in primes in $X\mathfrak B$ to the equation $f(\bm x)=t$, i.e. 
$$R_{f, t}(X, \mathcal A, \mathfrak B)=\#\{\bm x=(x_1,\ldots,x_s)\in X\mathfrak B \mid  x_i \text{ is prime }\forall \ 1\leq i\leq s,\ f(\bm x)=t\}.$$
Let $K>0$ be any real number. Then
    $$R_{f, t}(X, \mathcal A, \mathfrak B)= \mathfrak S \sigma_\infty \frac{X^{s-2}}{L^s} \left( \sum_{k=0}^{K-1} k! L^{-k} \right)^s  + O(X^{s-2}L^{-K}).$$
    Here $\mathfrak S$ is a non-negative constant that can be interpreted as the product of $p$-adic densities and $\sigma_\infty$ is coming from the singular integral and can be interpreted as the local density of real solutions to $f(\bm x) = t$ inside of $X\mathfrak B$.
\end{thm}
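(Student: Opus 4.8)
The plan is to combine the two main theorems of the paper, namely Theorem \ref{Tmain} (or rather its box version Theorem \ref{Tmainbox}) together with the concrete data for the prime-indicator sequence. First I would verify that all hypotheses of Theorem \ref{Tmainbox} are met: Condition \ref{CL1} is assumed on $f$, the existence of a positive real zero of $f$ is assumed, Condition \ref{CL2} holds by Remark \ref{R L2withindicatorfct} since the weights form an indicator function, and Conditions \ref{CD}, \ref{CC}, \ref{CK} follow from the explicit description $\kappa(q,a) = \mathds 1_{[(q,a)=1]}/\varphi(q)$: Condition \ref{CD} is essentially the prime number theorem in arithmetic progressions with Siegel--Walfisz, Condition \ref{CK} follows from the standard lower bound $\varphi(q) \gg q/\log\log q$, and Condition \ref{CC} is the multiplicativity relation for $1/\varphi$, for which we cite \cite{biggs2023minimalistversioncirclemethod}, Lemma 7.1. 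Finally, smooth approximability of $A(X) = \pi(X)$ by $\Psi(X) = XL^{-1}\sum_{k=0}^{m-1}k!L^{-k}$ must be checked: property (1) is the asymptotic expansion of $\pi(X)$ quoted above, and properties (2) and (3) are elementary differentiations of $\Psi$.

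With the hypotheses in place, Theorem \ref{Tmainbox} gives directly
$$R_{f, t}(X, \mathcal A, \mathfrak B) = \mathfrak S \mathfrak I_{\mathfrak B}(X) + O(A(X)^s X^{-2} L^{-K}).$$
The remaining work is to unwind $\mathfrak I_{\mathfrak B}(X)$ into the explicit shape in the statement. By Lemma \ref{Lcompsing} together with Proposition \ref{Pboundsing} and the volume interpretation at the end of Section \ref{S singular integral}, the main term of $\mathfrak I_{\mathfrak B}(X)$ equals $\sigma_\infty A(X)^s X^{-2}$ up to an error $O(A(X)^s X^{-2} L^{-B})$, and $\sigma_\infty$ is the $\mu_{\mathcal A}$-weighted real density inside $\mathfrak B$. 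Substituting the expansion $A(X) = XL^{-1}\sum_{k=0}^{m-1}k!L^{-k} + O(A(X)L^{-m})$ and raising it to the $s$-th power — here one takes $m$ large enough relative to $K$, say $m = K + s$, so that the cross terms and the tail beyond $k = K-1$ are absorbed into the error $O(X^{s-2}L^{-K})$ — yields
$$A(X)^s X^{-2} = \frac{X^{s-2}}{L^s}\left(\sum_{k=0}^{K-1}k! L^{-k}\right)^s + O(X^{s-2}L^{-K}).$$
Multiplying through by the bounded constant $\mathfrak S\sigma_\infty$ and noting $A(X)^s X^{-2} \asymp X^{s-2}L^{-s}$ turns the error term $O(A(X)^sX^{-2}L^{-K})$ into $O(X^{s-2}L^{-K})$, giving the claimed formula.

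The only genuinely new content beyond invoking earlier results is the bookkeeping of the error terms when expanding $A(X)^s$: one must be careful that the error $O(A(X)L^{-m})$ in each factor, once the product is expanded, contributes at worst $O(A(X)^{s-1} \cdot A(X)L^{-m} \cdot X^{-2}) = O(X^{s-2}L^{-m-s+s}) = O(X^{s-2}L^{-m})$, and similarly that truncating the finite sum at $k = K-1$ rather than $k = m-1$ costs only $O(X^{s-2}L^{-K})$ since the dropped terms are of size $O(X^{s-2}L^{-s-K})$. Choosing $m = \max(2K, s + K)$ and $B \geq 2K$ makes every error term $O(X^{s-2}L^{-K})$. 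I expect this error-term bookkeeping, rather than any conceptual difficulty, to be the main thing to get right; everything else is a direct citation of Theorems \ref{Tmain} and \ref{Tmainbox} and the prime number theorem.
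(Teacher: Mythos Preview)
Your proposal is correct and follows essentially the same approach as the paper: verify Conditions \ref{CL2}, \ref{CD}, \ref{CC}, \ref{CK} and smooth approximability for the prime indicator sequence (using the Siegel--Walfisz theorem, the explicit $\kappa(q,a)=\mathds 1_{[(q,a)=1]}/\varphi(q)$, and the prime number theorem expansion), then invoke Theorem \ref{Tmainbox} and the identification $\mathfrak I_{\mathfrak B}(X)=\sigma_\infty A(X)^sX^{-2}+O(A(X)^sX^{-2}L^{-B})$ from Section \ref{S singular integral}. The paper is terser---it simply states ``This shows the following theorem'' after listing the verified conditions---while you spell out the substitution of the $\pi(X)$ expansion and the error bookkeeping, but there is no difference in strategy.
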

\begin{rem}
    One can show positivity of the constant $\mathfrak S$ coming from the singular series assuming that $f$ is in some regard well behaved, for instance assuming the `no bad primes' condition in \cite{Liu2011integralpoints}. 
\end{rem}

\subsection*{$k$-free numbers}
Applying the machinery developed in this paper we obtain a special case of Theorem \ref{Tmain}. In the following we denote by $\zeta$ the Riemann zeta function.

\begin{thm}\label{Tmainkfree}
Consider $f$ such that Condition \emph{\ref{CL1}} is satisfied and $f(\bm x)=0$ has a solution in $\RR^s_{>0}$. Fix $X$ and define $\mathfrak B$ as in Definition \ref{D boxB}. Let $R_{f, t}(X, \mathcal A, \mathfrak B)$ be the number of solutions in $k$-free numbers in $X\mathfrak B$ to the equation $f(\bm x)=t$, i.e. 
$$R_{f, t}(X, \mathcal A, \mathfrak B)=\#\{\bm x=(x_1,\ldots,x_s)\in X\mathfrak B \mid  x_i \text{ is $k$-free }\forall \ 1\leq i\leq s,\ f(\bm x)=t\}.$$
    Let $K>0$ be any real number. Then
    $$R_{f, t}(X, \mathcal A, \mathfrak B) = \mathfrak S \sigma_\infty\frac{1}{\zeta(k)^s}X^{s-2} + O(X^{s-2}L^{-K}).$$
    Here $\mathfrak S \sigma_\infty$ is non-negative and can be interpreted as the product of local densities.
\end{thm}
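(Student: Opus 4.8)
The plan is to check that $\mathcal A=(\mathds 1_{k\text{-free}}(x))_{x\in\NN}$ fulfils all the hypotheses of Theorem~\ref{Tmainbox} and then to unwind the resulting main term. First I would recall the sieve identity $\mathds 1_{k\text{-free}}(n)=\sum_{d^k\mid n}\mu(d)$, which gives $A(X)=\zeta(k)^{-1}X+O(X^{1/k})$; in particular $A(X)\asymp X$. Consequently $A(X)$ is smoothly approximable by the linear function $\Psi(X)=\zeta(k)^{-1}X$: property~(1) of Definition~\ref{Dsmthaprox} holds since $X^{1/k}=O(A(X)L^{-m})$ for every $m$, property~(2) holds because $\psi\equiv\zeta(k)^{-1}\asymp A(X)/X$, and property~(3) is trivial as $\psi'\equiv 0$. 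Since $\mathcal A$ is an indicator function and $A(X)\asymp X$, Condition~\ref{CL2} holds with $c=0$ by Remark~\ref{R L2withindicatorfct}.

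The core of the proof is the analysis of the sums $A(q,h;X)$. Using the sieve identity and counting integers $n\le X$ with $n\equiv h\pmod q$ and $d^k\mid n$ as a progression modulo $\operatorname{lcm}(q,d^k)$, one obtains, uniformly in $q$ and $h$,
\[
A(q,h;X)=\kappa(q,h)\,A(X)+O(X^{1/k}),\qquad
\kappa(q,h):=\zeta(k)\sum_{\substack{d\ge 1\\ \gcd(d^k,q)\mid h}}\frac{\mu(d)\gcd(d^k,q)}{q\,d^k},
\]
and $\sum_{h\bmod q}\kappa(q,h)=1$ as forced by Condition~\ref{CD}. Since the error is power-saving and uniform, Condition~\ref{CD} holds for $Q_D$ as large as a small power of $X$, in particular $Q_D\ge L^B$. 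Factoring the defining sum over the prime powers dividing $q$ gives the Euler-product shape
\[
\kappa(q,h)=\frac1q\prod_{p\mid q}\frac{\ell_p(h)}{1-p^{-k}},
\]
with local factors $\ell_p(h)\in[0,1]$ depending only on $v_p(h)$ and $v_p(q)$, and vanishing precisely when $p^k\mid q$ and $p^k\mid h$. Condition~\ref{CK} follows at once with $c=0$, since $\kappa(q,h)\le\frac1q\prod_p(1-p^{-k})^{-1}=\zeta(k)/q$. For Condition~\ref{CC} note that by the Chinese Remainder Theorem $qh'+q'h\equiv q'h\pmod q$ and $\equiv qh'\pmod{q'}$, and that $q'$ is a unit modulo every prime power dividing $q$, so $v_p(q'h)=v_p(h)$ and hence $\ell_p(q'h)=\ell_p(h)$ for $p\mid q$; multiplying the two Euler products then yields $\kappa(qq',qh'+q'h)=\kappa(q,q'h)\kappa(q',qh')$.

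With all hypotheses in place, Theorem~\ref{Tmainbox} gives $R_{f,t}(X,\mathcal A,\mathfrak B)=\mathfrak S\,\mathfrak I_{\mathfrak B}(X)+O(A(X)^sX^{-2}L^{-K})$, together with $\mathfrak I_{\mathfrak B}(X)=\sigma_\infty A(X)^sX^{-2}$ and the interpretation of $\mathfrak S\sigma_\infty$ as a product of local densities (hence non-negative). Inserting $A(X)=\zeta(k)^{-1}X+O(X^{1/k})$ replaces $A(X)^sX^{-2}$ by $\zeta(k)^{-s}X^{s-2}$ at the cost of $O(X^{s-3+1/k})$, which is swallowed by $O(X^{s-2}L^{-K})$ because $1-1/k>0$; the same substitution in the error terms gives the stated shape. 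The main obstacle is the bookkeeping in the middle paragraph, namely extracting the clean Euler product for $\kappa(q,h)$ and handling the primes $p$ with $p^k\mid q$ carefully enough that Conditions~\ref{CC} and~\ref{CK} hold uniformly in $q$ and $h$; everything else is routine given the machinery of Sections~\ref{S weyl estimate}--\ref{S singular integral}.
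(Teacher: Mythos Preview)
Your proposal is correct and follows essentially the same route as the paper: verify Conditions~\ref{CL2}, \ref{CD}, \ref{CC}, \ref{CK} and smooth approximability for the $k$-free indicator, then invoke Theorem~\ref{Tmainbox} and substitute $A(X)=\zeta(k)^{-1}X+O(X^{1/k})$. One small difference worth noting: for Condition~\ref{CK} you bound $\prod_{p\mid q}(1-p^{-k})^{-1}\le\zeta(k)$ directly, yielding $c=0$, whereas the paper takes the cruder route $\prod_{p\mid q}(1-p^{-k})^{-1}\le(4/3)^{\omega(q)}$ and then invokes Hardy--Ramanujan to get $c=1$; your bound is cleaner and entirely sufficient.
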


We can furthermore provide a condition and argument for the singular series to be lower bounded. This allows us to conclude that the main term in the above theorem dominates the error term, i.e. that the constant $C$ is positive. Note that the required condition for this has to be tailored to the specific sequence $\mathcal{A}$. 
Our condition to ensure positivity of the singular series is analogous to Condition (B) in \cite{Baker1}. 

\begin{taggedcondition}{B}\label{CB}
     For all primes $p \mid 2\det F$ there exists a $\bm x=(x_1,\ldots,x_s)$ such that \begin{equation*}
        f(\bm x)\equiv t \mods {(2\det F)^{2k-1}}\text{ and } p^k\nmid x_i \text{ for all } i=1,\ldots,s.
    \end{equation*}  
\end{taggedcondition}

This allows us to conclude the following. 
\begin{thm}\label{Tkfree}
Let $f,\ \mathfrak B$ and $ R_{f, t}(X, \mathcal A, \mathfrak B)$ as in Theorem \ref{Tmainkfree} and assume Condition \emph{\ref{CB}}.
Let $K>0$ be any real number. Then
    $$R_{f, t}(X, \mathcal A, \mathfrak B)= \mathfrak S \sigma_\infty\frac{1}{\zeta(k)^s}X^{s-2} + O(X^{s-2}L^{-K}).$$
    Here $\mathfrak{S}\sigma_\infty$ is positive. The constant $\mathfrak S$ can be interpreted as the product of $p$-adic densities and $\sigma_\infty$ can be interpreted as the local density of real solutions to $f(\bm x) = t$ inside of $X\mathfrak B$.
\end{thm}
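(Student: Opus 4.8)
The plan is to deduce Theorem~\ref{Tkfree} from Theorem~\ref{Tmainkfree} by showing that Condition~\ref{CB} forces the product of local densities $\mathfrak S\sigma_\infty$ to be strictly positive, so that the main term genuinely dominates the error term $O(X^{s-2}L^{-K})$. Since Theorem~\ref{Tmainkfree} already gives the asymptotic formula with the claimed shape and the interpretation of $\mathfrak S$ and $\sigma_\infty$ as the $p$-adic and real local densities, the only new content is the lower bound $\mathfrak S\sigma_\infty \gg 1$. Because $\sigma_\infty$ is, by Proposition~\ref{Pboundsing} and the volume interpretation at the end of Section~\ref{S singular integral}, bounded below by a positive constant (it is the $\mu_{\mathcal A}$-weighted volume of a genuine patch of real solutions near $\bm x_0$, and $d\mu_{\mathcal A}\asymp 1$ on $\mathfrak B$), it remains to show $\mathfrak S > 0$, and in fact $\mathfrak S \gg 1$.

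First I would recall from Section~\ref{S singular series} that $\mathfrak S = \prod_p \tilde\chi_p(\mathcal A)$ with $\tilde\chi_p(\mathcal A) = \lim_{m\to\infty} p^m M(p^m)$ the weighted $p$-adic density, where for the $k$-free weights $\kappa(p^m, x)$ is the (normalized) density of $k$-free residues, i.e.\ $\kappa(p^m,x) = 0$ if $p^k \mid x$ and otherwise a constant. Lemma~\ref{Lbqbound} together with Condition~\ref{CL1} gives $B(q) \ll q^{-3/2+\varepsilon}$, hence for all $p$ outside a finite set (and in particular for $p \nmid 2\det F$) the tail $\sum_{k\geq 1} B(p^k)$ is small, so $\tilde\chi_p(\mathcal A) = 1 + O(p^{-3/2+\varepsilon})$ and the partial product over such primes converges to something bounded below by a positive absolute constant. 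It thus suffices to show $\tilde\chi_p(\mathcal A) > 0$ for each of the finitely many primes $p \mid 2\det F$; a uniform positive lower bound on this finite product then follows.

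For a fixed $p \mid 2\det F$, the key step is to use Condition~\ref{CB} to produce a solution that survives $p$-adically. Condition~\ref{CB} provides $\bm x$ with $f(\bm x)\equiv t \pmod{(2\det F)^{2k-1}}$ and $p^k \nmid x_i$ for all $i$; restricting attention to the prime $p$, set $m_0$ to be the $p$-adic valuation of $(2\det F)^{2k-1}$, so $f(\bm x)\equiv t \pmod{p^{m_0}}$ with each $x_i$ not divisible by $p^k$, i.e.\ $\kappa(p^k, x_i) \neq 0$. Since $2\det F$ divides the Hessian determinant of $f$, for $m \geq m_0$ the standard Hensel-type lifting argument (the smooth-point version: $f$ has a nonsingular zero of $f-t$ mod $p^{m_0}$ with respect to the variable whose partial derivative has minimal $p$-valuation $\leq v_p(2\det F)$) shows that the count $M(p^m)$ of $\kappa$-weighted solutions stabilizes, i.e.\ $p^m M(p^m)$ is eventually constant and bounded below by the contribution of the single residue $\bm x \bmod p^k$, which is $\asymp_p \prod_i \kappa(p^k, x_i) \cdot p^{k}$-ish — a fixed positive quantity. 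One must be a little careful that lifting $f(\bm x)\equiv t \pmod{p^m}$ in the distinguished variable $x_1$ does not push $x_1$ into $p^k\mid x_1$; this is where the exponent $2k-1$ in Condition~\ref{CB} is used, exactly as in Baker's Condition~(B): the congruence holds to a high enough power that the lift only perturbs $x_1$ modulo a high power of $p$, leaving its residue mod $p^k$ (and hence $\kappa(p^k,x_1)\neq 0$) unchanged. Hence $\tilde\chi_p(\mathcal A) \geq c_p > 0$.

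Combining: $\mathfrak S = \big(\prod_{p \mid 2\det F} \tilde\chi_p(\mathcal A)\big)\big(\prod_{p \nmid 2\det F}\tilde\chi_p(\mathcal A)\big) \gg 1$, and with $\sigma_\infty \gg 1$ we get $\mathfrak S\sigma_\infty \gg 1$, so the main term in Theorem~\ref{Tmainkfree} has order $X^{s-2}$ and dominates $O(X^{s-2}L^{-K})$; the asymptotic formula and the density interpretations are inherited verbatim from Theorem~\ref{Tmainkfree}. The main obstacle I expect is the bookkeeping in the Hensel lifting at the bad primes: making precise that the exponent $2k-1$ in Condition~\ref{CB} is exactly what is needed so that lifting the solution to arbitrarily high powers of $p$ preserves the $k$-free-compatibility condition $p^k\nmid x_i$, and that the stabilized value $\lim_m p^m M(p^m)$ is genuinely bounded away from $0$ rather than merely nonzero for each fixed $m$. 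Everything else — convergence of the good-prime product via $B(q)\ll q^{-3/2+\varepsilon}$, positivity of $\sigma_\infty$ — is already in hand from the earlier sections.
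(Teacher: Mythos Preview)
Your proposal is essentially the same approach as the paper's: reduce everything to positivity of $\mathfrak S$, split the Euler product into a convergent tail (controlled by $B(q)\ll q^{-3/2+\varepsilon}$) and finitely many local factors, and at each remaining prime use Condition~\ref{CB} together with a Hensel-type lifting to bound $\tilde\chi_p(\mathcal A)$ away from zero. The paper packages the lifting step as a quantitative lemma (for every prime $p$ there exist $\alpha\geq\beta$ with $1\leq\beta\leq k$ such that $f(\bm x)\equiv t\pmod{p^{\alpha+\nu}}$ with $p^\beta\nmid x_i$ has at least $p^{(s-1)\nu}$ solutions for all $\nu\geq 0$), then invokes the standard argument of Davenport/Liu to conclude $\mathfrak S>0$.

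One small imprecision worth fixing: your parenthetical ``(and in particular for $p\nmid 2\det F$)'' overstates what the tail bound gives. The estimate $\tilde\chi_p=1+O(p^{-3/2+\varepsilon})$ guarantees positivity only for $p$ large; for small primes not dividing $2\det F$ you still need an argument. This is harmless, since for such $p$ the form is nonsingular mod $p$ and the standard Hensel lift (together with the freedom in $s\geq 10$ variables to avoid $p^k\mid x_i$) produces the required solutions directly---indeed the paper's lemma is stated for \emph{all} primes, not just bad ones. Apart from this, your outline matches the paper's proof.
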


Note that Theorem \ref{Tmainkfree} and \ref{Tkfree} match Bakers results for the case of squarefree numbers.
More precisely \cite{Baker1}, Theorem 1 and 2 correspond to our first theorem and his Theorem 5 to our second. 

Before we start checking the required conditions we start with the general setup. 
A $k$-free number is by definition not divisible by $n^k$ for all $n\in \NN$. We denote their indicator function by 
$$\mu_k(n)=\begin{cases}
    1 &\text{ if $n$ is $k$-free} \\
    0 &\text{ else.}
\end{cases}$$
Note that for the squarefree numbers we have $\mu_2(n)=\mu(n)^2$.
In this example we choose our weights 
to be $a_x=\mu_k(x)$. 
Using the asymptotics of the $k$-free numbers as for example stated in \cite{bruedernetal}, Eq. (1.3), this yields 
$$A(X)=\sum_{x\leq X}\mu_k(x)=\frac{X}{\zeta(k)}+O(X^{\frac 1k}).$$
We hence set our error term to be $E(X)=X^{\frac 1k}$, which obviously satisfies $E(X)=O(A(X)L^{-m})$ for all $m\in \NN$.
Note furthermore that Condition \ref{CL2} is satisfied since our weights $a_x$ are an indicator function and $A(X)\asymp X$.

The next step is to find the asymptotics for $k$-free numbers in arithmetic progressions.  
More explicitly: for Condition \ref{CD} we need to find $\kappa(q,a)$ and a constant $Q_D=Q_D(X)$ such that for all $q\leq Q_D(X)$ and $h\mods q$ we have 
$$A(q,a;X)=\kappa(q,a)A(X)+ O(X^{1/k}).$$
The expression for $\kappa(q,a)$ can be found in \cite{bruedernetal}.
The definition given below is derived from their definition of $g$ around (2.6). 
Let the prime factorization of $q$ be $q=\prod_p p^{e_p}$.
Then  
$$\kappa(q,a)=\begin{cases}
    0&\text{ there exists $p$ s.t. }p^k\mid (q,a)\\
    \frac{1}{q}\prod_{p|q}(1-\frac{1}{p^k})^{-1}\prod_{\substack{p|q\\p\nmid \frac{q}{(q,a)}}}(1-p^{e_p-k})&\text{ else.}
\end{cases}$$
In \cite{bruedernetal}, Eq. (2.13) the stated error term of $O(X^{1/k})$ can be found. It follows that we can take $Q_D(X)=X^{1-\frac 1k}$.
\begin{rem}
    The error term $O(X^{1/k})$ for $k$-free numbers in arithmetic progressions is not the best possible. It was already known to Prachar \cite{Prachar1958} for the case of coprime residue class and modulus. It has since been improved, but this is irrelevant for us.  
\end{rem}

Condition \ref{CC} follows from noting that for $(q,q')=1$ we have $$(qq',qh'+q'h)=(q,q'h)(q',qh').$$

It is left to show Condition \ref{CK}, i.e. that there exists a constant $c\in \NN$ such that $\kappa(q,a)\ll \frac{(\log q)^c}{q}$.
If $\kappa(q,a)$ vanishes the statement is trivial. Note furthermore that $e_p-k\leq -1$. Hence $(1-p^{e_p-k})<1$ for all $p$. We obtain 
\begin{align*}    
\kappa (q,a)&\leq \frac 1q\prod_{p|q}(1-\frac{1}{p^k})^{-1} \\
&\leq \frac 1q \left(\frac{2^k}{2^k-1}\right)^{\omega(q)} \\
&\leq \frac 1q \left(\frac{4}{3}\right)^{\omega(q)}
\end{align*}
where $\omega(q)$ counts the number of distinct primes dividing $q$. 
By the Hardy-Ramanujan Theorem we know that 
$$\omega(q)=\log\log q + O(\log \log (q) ^{1/2+\varepsilon}.$$
Hence $\kappa(q,a)\ll \frac{\log q}{q}$ and Condition \ref{CK} holds with $c=1$.
Having checked all the conditions we can hence conclude Theorem \ref{Tmainkfree}. 

\begin{proof}[Proof of Theorem \ref{Tkfree}]
The argument to show positivity assuming Condition \ref{CB} is an adaptation of 

the proof of Theorem 5 in \cite{Baker1} to $k$-free numbers. It uses a slight modification of \cite{Davenport2005}, Lemma 17.1 to show the following. 

\begin{lem}\label{L number of sol}
    For all primes $p$ there exists $1\leq \beta\leq k$ and $\alpha\geq \beta$ such that for $\nu\geq 0$ there are at least $p^{(s-1)\nu}$ solutions to 
    $$f(\bm x)\equiv t\mods{p^{\alpha+\nu}} \text { and } x_i\not\equiv 0\mods{p^\beta} \text{ for all }1\leq i\leq s.$$
    
\end{lem}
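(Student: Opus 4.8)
The plan is to prove this by a Hensel-type lifting argument, working one prime $p$ at a time. The key is to find, for each $p$, a single solution modulo a fixed power $p^{\alpha}$ at which some partial derivative has controlled $p$-adic valuation, and then to lift it. Concretely, I would first invoke Condition \ref{CB} (or rather its local consequence at $p$): since $p \mid 2\det F$ forces us to use the hypothesis, while for $p \nmid 2\det F$ the form $f$ is nonsingular mod $p$ and the statement is essentially classical with $\beta = 1$ and $\alpha = 1$. So the substantive case is $p \mid 2\det F$. There Condition \ref{CB} hands us a point $\bm x_0$ with $f(\bm x_0) \equiv t \pmod{(2\det F)^{2k-1}}$ and $p^k \nmid x_{0,i}$ for all $i$; let $\beta_i \le k-1$ be defined by $p^{\beta_i} \parallel x_{0,i}$ after possibly adjusting, and set $\beta = \max_i \beta_i$ or rather choose $\beta$ uniformly — this is where I would follow Davenport's Lemma 17.1 closely, as the bookkeeping of which variable carries which power is exactly the delicate part.

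The main step is the lifting. Writing $\bm x = \bm x_0 + p^{\alpha}\bm y$ (or with variable-dependent shifts $p^{\alpha - \text{something}}$), we have
$$f(\bm x_0 + p^{\alpha}\bm y) \equiv f(\bm x_0) + p^{\alpha}\,\nabla f(\bm x_0)\cdot \bm y \pmod{p^{2\alpha}},$$
so provided $2\alpha \ge \alpha + \nu$ we need $\nabla f(\bm x_0)\cdot \bm y \equiv (t - f(\bm x_0))/p^{\alpha} \pmod{p^{\nu}}$. If some coordinate of $\nabla f(\bm x_0)$ is a $p$-adic unit this is one linear congruence in $s$ unknowns mod $p^\nu$, which has exactly $p^{(s-1)\nu}$ solutions, and each keeps $x_i \not\equiv 0 \pmod{p^{\beta}}$ since we are only perturbing by a high power of $p$. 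The choice of $\alpha$ must be large enough (depending on $\det F$ and $k$) that the quadratic error term $p^{2\alpha}$-part is harmless and that $\nabla f(\bm x_0)$ indeed has a unit coordinate — here one differentiates $f(\bm x) = \bm x^T F \bm x$ and uses that $\bm x_0$ is not identically divisible by $p$ in a controlled sense together with $p^{\alpha}$ being much larger than the power of $p$ dividing $2\det F$.

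The genuine obstacle is the case where \emph{every} coordinate of $\nabla f(\bm x_0) = 2F\bm x_0$ is divisible by $p$: this can happen precisely at the bad primes, and it is why $\beta$ is allowed to range up to $k$ rather than being forced to $1$. The resolution, following Davenport, is to rescale — if $p^{\gamma} \parallel \gcd$ of the entries of $2F\bm x_0$, one factors out $p^{\gamma}$ from the linear form and from $t - f(\bm x_0)$ (whose valuation is at least $2k-1$ by Condition \ref{CB}, hence large enough to absorb $\gamma$), reducing again to a single congruence mod $p^\nu$ after replacing $\alpha$ by $\alpha + \gamma$. I expect the delicate point to be verifying that the valuations line up: that $v_p(t - f(\bm x_0)) \ge \alpha + \gamma + \nu$ can be arranged uniformly in $\nu \ge 0$ by choosing $\alpha$ appropriately at the start, using the $2k-1$ exponent in Condition \ref{CB} as the slack. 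Once the linear congruence is set up correctly the count $p^{(s-1)\nu}$ is immediate, and the non-divisibility $x_i \not\equiv 0 \pmod{p^{\beta}}$ is preserved automatically because all perturbations have $p$-adic valuation exceeding $\beta$.
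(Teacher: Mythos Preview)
Your approach is correct and matches the paper's, which likewise defers to a modification of Davenport's Lemma 17.1 (in the spirit of Baker's Theorem 5 in \cite{Baker1}) for this lifting argument; your case split between $p\nmid 2\det F$ and $p\mid 2\det F$, the use of Condition \ref{CB} to produce the base point, and the $\gamma$-rescaling for a non-unit gradient are all exactly the ingredients needed.

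One technical point to tighten when you write it out: the one-shot lifting via $\bm x = \bm x_0 + p^{\alpha}\bm y$ only kills the quadratic remainder when $2\alpha \ge \alpha+\nu$, i.e.\ $\alpha\ge\nu$, but $\alpha$ is fixed while $\nu$ is arbitrary. Davenport's argument is instead inductive in $\nu$: given a solution $\bm x'$ mod $p^{\alpha+\nu}$ with $\bm x'\equiv \bm x_0 \pmod{p^{\alpha-\gamma}}$ (so that $v_p(\nabla f(\bm x'))=\gamma$ is preserved), perturb by $p^{\alpha+\nu-\gamma}\bm z$ with $\bm z$ ranging mod $p$. The quadratic correction is then $O(p^{2(\alpha+\nu-\gamma)})$, which vanishes mod $p^{\alpha+\nu+1}$ once $\alpha>2\gamma$, and the resulting linear congruence in $\bm z$ mod $p$ has exactly $p^{s-1}$ solutions. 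Iterating $\nu$ times gives the $p^{(s-1)\nu}$ count, and each perturbation has $p$-adic valuation at least $\alpha-\gamma\ge\beta$, so the condition $x_i\not\equiv 0\pmod{p^\beta}$ is preserved as you say.
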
    
The argument to conclude that the singular series is bounded from below is then analogous to \cite{Liu2011integralpoints}, Proof of Prop. 2.2 or equivalently \cite{Davenport2005}, Thm. 17.1.
\end{proof}

\printbibliography
\end{document}